\theoremstyle{plain}
\newtheorem{lemma}{Lemma}[section]
\theoremstyle{remark}
\newtheorem{definition}[lemma]{Definition}
\author[O. Tout]{Omar Tout}
\address{LaBRI, Universit\'e de Bordeaux, 351 cours de la Lib\'eration, 33 400
Talence, France}
\email{omar.tout@labri.fr}
\title[]{
A Frobenius formula for the structure coefficients of double-class algebras of Gelfand pairs}
\keywords{group algebras, double-class algebras, structure coefficients, representation theory of finite groups, Gelfand pairs and zonal spherical functions}
\subjclass[2010]{05E15}
\theoremstyle{plain}
\newtheorem{theoreme}{Theorem}[section]
\newtheorem{prop}[theoreme]{Proposition}
\newtheorem{cor}[theoreme]{Corollary}
\theoremstyle{definition}
\theoremstyle{remark}
\newtheorem*{notation}{Notation}
\date{}
\DeclareMathOperator{\diag}{diag}
\DeclareMathOperator{\tr}{tr}
\DeclareMathOperator{\Res}{Res}
\DeclareMathOperator{\Ind}{Ind}
\begin{document}
\maketitle
\paragraph{Abstract.} We generalise some well known properties of irreducible characters of finite groups to zonal spherical functions of Gelfand pairs. This leads to a Frobenius formula for Gelfand pairs. For a given Gelfand pair, the structure coefficients of its associated double-class algebra can be written in terms of zonal spherical functions. This is a generalisation of the Frobenius formula which writes the structure coefficients of the center of a finite group algebra in terms of irreducible characters.

\section{Introduction.}

Given a finite dimensional algebra with a fixed basis, the product of two basis elements can be written as a linear combination of basis elements. The coefficients that appear in this expansion are called \textit{structure coefficients}. Computing these coefficients is essential because, by linearity, they allow us to compute all products in the considered algebra. However, it is difficult, even in particular cases of algebras, to give explicit formulas for these coefficients.

\bigskip

The structure coefficients of centers of group algebras and of double-class algebras had been the most studied in the literature. In fact, the study of the structure coefficients of centers of group algebras is related to the representation theory of finite groups by Frobenius theorem, see \cite{JaVi90}[Lemma 3.3] and the appendix of Zagier in \cite{lando2004graphs}. %We give this theorem at the end of Section \ref{sec:Th_de_rep}. 
This theorem expresses, in the general case of centers of group algebras, the structure coefficients in terms of irreducible characters.

The case of the center of the symmetric group algebra $Z(\mathbb{C}[\mathcal{S}_{n}])$ is particularly interesting and many authors have studied it in details. To compute the structure coefficients of $Z(\mathbb{C}[\mathcal{S}_{n}])$ by a direct way one should study the cycle-type of the product of two permutations which can be quite difficult, see for example the papers \cite{bertram1980decomposing}, \cite{boccara1980nombre}, \cite{Stanley1981255}, \cite{walkup1979many}, \cite{GoupilSchaefferStructureCoef} and \cite{JaVi90} which deal with particular cases of these coefficients. %Despite all these efforts, there is no general formula for the structure coefficients of the center of the symmetric group algebra and it seems that it is an elusive goal. We show the difficulty of computing the structure coefficients of $Z(\mathbb{C}[\mathcal{S}_{n}])$ with an example in Section \ref{sec:calc_coeff_sym}.

According to another formula due to Frobenius, see \cite{sagan2001symmetric}, the irreducible characters of the symmetric group appear in the development of Schur functions in terms of power functions. This links the study of the structure coefficients of the center of the symmetric group algebra to that of the symmetric functions theory.

\bigskip

A pair $(G,K),$ where $G$ is a finite group and $K$ a sub-group of $G,$ is said to be a Gelfand pair if its associated double-class algebra $\mathbb{C}[K\setminus G/K]$ is commutative, see \cite[chapter VII.1]{McDo}. If $(G,K)$ is a Gelfand pair, the algebra of constant functions over the $K$-double-classes in $G$ has a particular basis, whose elements are called \textit{zonal spherical functions} of the pair $(G,K).$ %We recall the theory of Gelfand pairs and zonal spherical functions in Section \ref{sec:paire_des_Gelfand}. 

For a finite group $G,$ the pair $(G\times G^{opp},\diag(G))$ is a Gelfand pair, see the Example 9 in \cite[Section VII.1]{McDo} or the introduction of Strahov in \cite{strahov2007generalized}. We study this pair in details in Sections \ref{sec:GelfandpairGxGopp} and \ref{sec:FrobeniustheorembyusingtheGelfandpair}. What makes this pair exceptional is that its zonal sphercial functions are the normalised irreducible characters of $G.$ This allows us to see the double-class algebras associated to Gelfand pairs (resp. zonal sphercial functions of Gelfand pairs) as generalisation of centers of group algebras (resp.  normalised irreducible characters).
%The double-class algebras associated to Gelfand pairs generalize the centers of group algebras and in this case the zonal spherical functions generalize the irreducible characters.

The pair $(\mathcal{S}_{2n}, \mathcal{B}_n)$ is a Gelfand pair, see \cite[Section VII.2]{McDo}. Its associated double-class algebra has a long list of properties similar to the center of the symmetric group algebra, see \cite{Aker20122465}, \cite{2014arXiv1402.4615D} and \cite{toutejc}. The study of its structure coefficients is also related to the theory of symmetric functions since the zonal spherical functions of the pair $(\mathcal{S}_{2n}, \mathcal{B}_n)$ appear in the development of zonal polynomials in terms of power-sums. The zonal polynomials are specialization of Jack polynomials, defined by Jack in \cite{jack1970class} and \cite{jack1972xxv}, which form a basis for the algebra of symmetric functions. 

%Zonal spherical functions of Gelfand pairs can be viewed as generalisation of irreducible characters of finite groups. 
For special cases of Gelfand pairs such as the Gelfand pair of $(\mathcal{S}_{2n},\mathcal{B}_n)$ and that of $(\mathcal{S}_n\times \mathcal{S}_{n-1}^{opp},\diag(\mathcal{S}_{n-1}))$ there exists a Frobenius formula, given by Goulden and Jackson in \cite{GouldenJacksonLocallyOrientedMaps} and by Jackson and Sloss in \cite{jackson2012character} respectively, which writes the structure coefficients of the double-class algebras $\mathbb{C}[\mathcal{B}_{n}\setminus \mathcal{S}_{2n}/ \mathcal{B}_{n}]$ and $\mathbb{C}[\diag(\mathcal{S}_{n-1})\setminus \mathcal{S}_n\times \mathcal{S}^{opp}_{n-1}/ \diag(\mathcal{S}_{n-1})]$ in terms of zonal spherical functions. 

According to the author's knowledge, there isn't such a formula which treats the general case of Gelfand pairs. The goal of this paper is to give, for a Gelfand pair $(G,K),$ a formula similar to that of Frobenius which writes the structure coefficients of the double-class algebra $\mathbb{C}[K\setminus G/ K]$ in terms of zonal spherical functions.

%A similar theorem which links the structure coefficients of double-class algebras with the theory of zonal spherical functions of Gelfand pairs has been established in particular cases, see \cite{jackson2012character} and \cite{GouldenJacksonLocallyOrientedMaps}. The author cannot find in the literature a similar to Frobenius theorem's in the general case of double-class algebras of Gelfand pairs.

This formula can help computing the structure coefficients of double-class algebras of Gelfand pairs when the direct calculation of these coefficients is difficult to conduct. It was already used in the cases of $\mathbb{C}[\mathcal{B}_{n}\setminus \mathcal{S}_{2n}/ \mathcal{B}_{n}]$ and $\mathbb{C}[\diag(\mathcal{S}_{n-1})\setminus \mathcal{S}_n\times \mathcal{S}^{opp}_{n-1}/ \diag(\mathcal{S}_{n-1})],$ see \cite{bernardi2011counting}, \cite{GouldenJacksonLocallyOrientedMaps}, \cite{jackson1990character} and \cite{JaVi90} for the first algebra and \cite{Jackson20121856} for the second. 

The majority of the results presented in this paper can be found in the author's Phd thesis \cite{touPhd14}. Theorem \ref{Th_prin_gener} and the application section, Section \ref{sec:app}, results can not be found there. The author's Phd thesis dealt primarily with the polynomiality property of the structure coefficients of some algebras and that study is where the results given in this paper come from. However, we decided to split the content of our Phd thesis in two papers since the results presented here do not concern the polynomiality property of the structure coefficients. The author is preparing another paper \cite{toutpoly} about the polynomiality property. We do not need the results in that paper for our work here but some results presented in this paper would be useful in \cite{toutpoly}.

\section{Structure coefficients in general}

In this section we define the structure coefficients in general. At the end we give a basic and simple proposition, Proposition \ref{desc_coef}, in particular cases of algebras which describes the structure coefficients. It will help us computing the structure coefficients in the next sections.

\begin{definition}
Let $\mathcal{K}$ be a commutative field and let $I$ be a finite set. Suppose that $\mathcal{B}$ is a finite dimensional algebra over $\mathcal{K}$ and that the elements of the family $(b_k)_{k\in I}$ form a basis for $\mathcal{B}.$ Take two basis elements $b_i$ and $b_j$ of $\mathcal{B}$ (here $i, j\in I$), then the product $b_ib_j$ is an element of $\mathcal{B},$ therefore it can be written as a linear combination of the basis elements $(b_k)_{k\in I} :$ 
\begin{equation}\label{coef_de_str}
b_ib_j=\sum_{k\in I}c_{ij}^kb_k
\end{equation}
where the coefficients $c_{ij}^k$\label{nomen:cij} are in $\mathcal{K}$ for every $k\in I.$ The coefficients $c_{ij}^k$ are called the {\em structure coefficients} of $\mathcal{B}$ in the basis $(b_k)_{k\in I}.$ If there is no confusion concerning the basis elements, we talk simply about structure coefficients without mentioning the basis.
\end{definition}

Note that all the algebras considered in this paper are over the field of complex number $\mathbb{C}.$ In some cases of algebras (like the ones considered in this paper), we can give a combinatorial description for the structure coefficients which is explained in the coming paragraph.

\begin{definition}
An algebra basis is said to be {\em multiplicative} if the product of two basis elements is a basis element.
\end{definition}
Suppose that we have a finite dimensional algebra $\mathcal{A}$ with a multiplicative basis $(b_k)_{k\in K},$ which we fix in this section, and a "type": $K \rightarrow I$ function where $I$ is a finite set. For an element $i\in I,$ define $B_i$ to be the set of all basis elements $b_k$ of $\mathcal{A}$ of type $i$ (the elements $b_k$ such that type($k$)=$i$). We can suppose that all the sets $B_i$ are non-empty.
\begin{notation}
If $X$ is a finite set, we denote by ${\bf X}$ the formal sum of its elements,
$$\textbf{X}=\sum_{x\in X}x.$$
\end{notation}

The elements of the family $({\bf B}_i)_{i\in I}$ are linearly independent. Suppose that the vector space $\mathcal{B}$ generated by the elements $({\bf B}_i)_{i\in I}$ forms a sub-algebra of $\mathcal{A}.$ In this case, there exists a combinatorial description for the structure coefficients of the algebra $\mathcal{B}$ given by the following proposition. This description will help us to compute the structure coefficients in the coming sections.
\begin{prop}\label{desc_coef}
Let $C_{ij}^k$ be the structure coefficients of the algebra $\mathcal{B},$ defined by the following equation:
\begin{equation}\label{coef_de_str}
{\bf B}_i{\bf B}_j=\sum_{k\in I}C_{ij}^k{\bf B}_k.
\end{equation}
Then $C_{ij}^k$ is the size of the following set:
\begin{multline}
C_{ij}^k=|\lbrace (x,y)\in \mathcal{A}^2 \text{ such that $x$ and $y$ are two basis elements of $\mathcal{A}$} \\
\text{of type $i$ and $j$ respectively and xy=z}\rbrace|,
\end{multline}
where $z\in \mathcal{A}$ is a fixed basis element of $\mathcal{A}$ of type $k.$ 
\end{prop}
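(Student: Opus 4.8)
The plan is to expand the product ${\bf B}_i{\bf B}_j$ directly from the definitions and then count. First I would write ${\bf B}_i{\bf B}_j = \left(\sum_{x\in B_i} x\right)\left(\sum_{y\in B_j} y\right) = \sum_{x\in B_i}\sum_{y\in B_j} xy$, using bilinearity of the product in $\mathcal{A}$. Since the basis $(b_k)_{k\in K}$ is multiplicative, each product $xy$ is again a basis element of $\mathcal{A}$, say $xy = b_{k(x,y)}$ for some index $k(x,y)\in K$. Hence ${\bf B}_i{\bf B}_j = \sum_{x\in B_i, y\in B_j} b_{k(x,y)}$, and collecting terms according to the type of $b_{k(x,y)}$ gives ${\bf B}_i{\bf B}_j = \sum_{\ell\in I}\left(\sum_{\substack{x\in B_i,\, y\in B_j\\ \mathrm{type}(k(x,y))=\ell}} b_{k(x,y)}\right)$.

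Next I would argue that, since by hypothesis this product lies in the subalgebra $\mathcal{B}$ spanned by the $({\bf B}_\ell)_{\ell\in I}$, the inner sum over each type $\ell$ must be a scalar multiple of ${\bf B}_\ell$. Concretely, fix $\ell\in I$ and a particular basis element $z\in B_\ell$. Comparing the coefficient of $z$ on both sides of equation \eqref{coef_de_str} (the $\mathcal{A}$-expansion versus $\sum_{k\in I} C_{ij}^k {\bf B}_k$), the right-hand side contributes exactly $C_{ij}^\ell$ to the coefficient of $z$ (since ${\bf B}_\ell$ contains $z$ with coefficient $1$ and no other ${\bf B}_k$ contains $z$), while the left-hand side contributes the number of pairs $(x,y)\in B_i\times B_j$ with $xy = z$. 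This yields the claimed formula. I would also note the consistency check that this count is independent of which $z\in B_\ell$ is chosen — this is automatic once we know the product lies in $\mathcal{B}$, since then the coefficient of every element of $B_\ell$ in ${\bf B}_i{\bf B}_j$ must equal the common value $C_{ij}^\ell$.

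The argument is essentially a bookkeeping exercise, so there is no serious obstacle; the only point requiring a little care is the appeal to the hypothesis that $\mathcal{B}$ is a subalgebra. Without it, the fiber sums $\sum_{\mathrm{type}(k(x,y))=\ell} b_{k(x,y)}$ need not be proportional to ${\bf B}_\ell$, and the structure coefficients $C_{ij}^k$ would not even be well-defined. So the main thing to emphasize is that the subalgebra hypothesis is exactly what forces each type-$\ell$ fiber to be the full formal sum ${\bf B}_\ell$ repeated $C_{ij}^\ell$ times, and hence that the number of pairs $(x,y)$ mapping to a fixed $z$ of type $k$ depends only on $i$, $j$, and $k$.
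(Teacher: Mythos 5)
Your argument is correct and is exactly the direct verification the paper has in mind: its proof of Proposition \ref{desc_coef} simply states that the identity follows from the definition of the structure coefficients, and your expansion of ${\bf B}_i{\bf B}_j$, use of multiplicativity of the basis, and comparison of the coefficient of a fixed $z$ of type $k$ is the fleshed-out version of that same computation. Your remark that the subalgebra hypothesis is what makes the count independent of the chosen $z$ is a sensible clarification, not a deviation in approach.
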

\begin{proof}
This is obtained directly from the definition of the structure coefficients.
\end{proof}

Even if we consider algebras over the field $\mathbb{C}$ in this paper, we should mention that under the conditions of this proposition, we have the following corollary.

\begin{cor}
The structure coefficients of the algebra $\mathcal{B}$ are positive integers.
\end{cor}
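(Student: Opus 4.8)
The plan is to observe that the corollary is an immediate consequence of Proposition~\ref{desc_coef}, so the "proof" is really just unpacking what that proposition already gives us. First I would recall the setup: we have an algebra $\mathcal{A}$ over $\mathbb{C}$ with a multiplicative basis $(b_k)_{k\in K}$, a type function, and the sub-algebra $\mathcal{B}$ spanned by the sums ${\bf B}_i$. By Proposition~\ref{desc_coef}, each structure coefficient $C_{ij}^k$ equals the cardinality of an explicitly described finite set of pairs $(x,y)$ of basis elements of $\mathcal{A}$ with $xy=z$ for a fixed basis element $z$ of type $k$.

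\begin{proof}
By Proposition~\ref{desc_coef}, for all $i,j,k\in I$ the structure coefficient $C_{ij}^k$ is the cardinality of a (necessarily finite) set, namely the set of pairs $(x,y)$ of basis elements of $\mathcal{A}$ of types $i$ and $j$ respectively with $xy=z$, where $z$ is a fixed basis element of $\mathcal{A}$ of type $k$. The cardinality of a finite set is a non-negative integer, hence $C_{ij}^k\in\mathbb{Z}_{\geq 0}$ for all $i,j,k\in I$. It remains to note that this integer is in fact positive, i.e. the set in question is non-empty. Fix $k\in I$ and a basis element $z$ of type $k$; since the sets $B_i$ are all non-empty, we may pick a basis element $x$ of type $i$. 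As $(b_k)_{k\in K}$ is a multiplicative basis, the element $x^{-1}z$ is again a basis element of $\mathcal{A}$ (one checks that a multiplicative basis of a finite-dimensional algebra consists of invertible elements whose inverses are again basis elements); call its type $j'$. Then $(x,x^{-1}z)$ witnesses that $C_{ij'}^k\geq 1$, so that for each $i$ and each $k$ at least one of the coefficients $C_{ij}^k$ is positive. Combined with $C_{ij}^k\in\mathbb{Z}_{\geq 0}$, this shows that the structure coefficients of $\mathcal{B}$ are positive integers.
\end{proof}

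The only step that requires a moment's thought is the non-vanishing claim, and the subtle point there is the implicit assumption that the multiplicative basis is closed under inverses (equivalently, that it is a group under multiplication up to scalars); in all the concrete cases treated later in the paper — bases indexed by group elements, by double cosets, etc. — this is clear, so I would expect the main obstacle to be purely one of being careful about exactly which hypotheses on "multiplicative basis" are in force rather than any real mathematical difficulty. If the intended reading of the corollary is merely "non-negative integers that are the sizes of the displayed sets," then even this step is unnecessary and the proof collapses to the first two sentences.
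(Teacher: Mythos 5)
Your first two sentences are exactly the paper's (implicit) argument: the corollary is stated without proof as an immediate consequence of Proposition \ref{desc_coef}, because each $C_{ij}^k$ is the cardinality of a finite set and hence a non-negative integer. That is the intended reading of ``positive'' here, and up to that point your proposal is correct.

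The additional step in which you try to upgrade this to strict positivity is a genuine error, on two counts. First, the parenthetical claim that a multiplicative basis of a finite-dimensional algebra consists of invertible elements whose inverses are again basis elements is false in general: nothing in the definition of a multiplicative basis forces invertibility (think of the natural basis of a monoid or semigroup algebra, which is multiplicative but contains non-invertible elements), so the element $x^{-1}z$ need not exist. Second, even if the basis were closed under inverses, your argument only produces, for fixed $i$ and $k$, \emph{some} index $j'$ with $C_{ij'}^k\geq 1$; it does not show that every $C_{ij}^k$ is nonzero, and indeed that stronger statement is false in the very examples the paper cares about: in $Z(\mathbb{C}[\mathcal{S}_n])$ many structure coefficients vanish (e.g.\ the product of two transposition classes contains no class of a long cycle for $n\geq 4$). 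So the final sentence of your proof, ``this shows that the structure coefficients of $\mathcal{B}$ are positive integers,'' does not follow from what precedes it. The fix is simply to delete the non-vanishing argument and read the corollary as asserting $C_{ij}^k\in\mathbb{Z}_{\geq 0}$, as your own closing remark anticipates.
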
 

\section{Structure coefficients of centers of finite groups algebras and double-class algebras}

In this section we define the structure coefficients of centers of finite groups algebras and of double-class algebras. These coefficients will be the core of our study in the coming sections.

\subsection{Centers of finite groups algebras}

Let $G$ be a finite group. For an element $g$ of $G,$ we denote by $C_g$ its associated conjugacy class in $G$ :
$$C_g:=\lbrace g'gg'^{-1}~~\vert~~g'\in G\rbrace.$$
This size of $C_g$ is known to be equal to the following fraction:
$$|C_g|=\frac{|G|}{|Stab_g|},$$
where $Stab_g:=\lbrace g' \in G~~\vert~~g'g=gg'\rbrace.$

Since $G$ is finite, we have a finite number of conjugacy classes in $G.$ We denote $\mathcal{C}(G)$ the set of conjugacy classes of $G$ and we consider a finite set which we denote by $\mathcal{I}$ to index its elements. The elements of $\mathcal{I}$ will be written using Greek letters. Thus, we have : $$\mathcal{C}(G)=\lbrace C_\lambda \mid \lambda\in \mathcal{I}\rbrace.$$

The group algebra of $G,$ known as $\mathbb{C}[G],$ is the algebra over $\mathbb{C}$ with basis the elements of the group $G.$ %Thus, an element $x$ of $\mathbb{C}[G]$ is written in a unique way as a linear combination of elements of $G$ with complex numbers coefficients :
%$$x=\sum_{g\in G}x_gg,$$
%where $x_g\in \mathbb{C}$ for every $g\in G.$
%Since the elements of $G$ form a basis for $\mathbb{C}[G],$ we can extend the product in $G$ to define the multiplication in $\mathbb{C}[G].$ If $x=\sum_{g\in G}x_gg$ and $x'=\sum_{g'\in G}x'_{g'}g'$ are two elements of $\mathbb{C}[G],$ then the product $xx'$ is defined as follows :
%$$xx'=\sum_{g\in G}x_gg\sum_{g'\in G}x'_{g'}g'=\sum_{g\in G, g'\in G}x_gx'_{g'}(gg').$$
The \textit{center of the group algebra} $\mathbb{C}[G]$, known as $Z(\mathbb{C}[G]),$ is the sub-algebra of $\mathbb{C}[G]$ of invariant elements under the conjugation action of $G$ on $\mathbb{C}[G]$ : 
$$Z(\mathbb{C}[G]):=\lbrace x\in \mathbb{C}[G] ~~|~~g'x=xg'~~\forall g'\in G\rbrace.$$

The conjugacy classes of $G$ index a basis of the center of the group algebra $Z(\mathbb{C}[G])$ as it is showed in the following proposition.
\begin{prop}\label{prop:base_centre_alg-de_grp} The family $({\bf C}_\lambda)_{\lambda\in \mathcal{I}}$, where
$${\bf C}_\lambda=\sum_{g\in C_\lambda}g,$$ 
form a basis for $Z(\mathbb{C}[G]).$
\end{prop}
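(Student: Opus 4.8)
The plan is to show two things: first that each $\mathbf{C}_\lambda$ lies in $Z(\mathbb{C}[G])$, and second that the family $(\mathbf{C}_\lambda)_{\lambda\in\mathcal{I}}$ spans $Z(\mathbb{C}[G])$ and is linearly independent. Linear independence is immediate: the conjugacy classes $C_\lambda$ are pairwise disjoint subsets of $G$, so the elements $g\in G$ appearing in distinct $\mathbf{C}_\lambda$ are distinct, and since $G$ itself is a basis of $\mathbb{C}[G]$, any linear relation among the $\mathbf{C}_\lambda$ forces all coefficients to vanish.

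For membership in the center, fix $\lambda\in\mathcal{I}$ and $g'\in G$. Conjugation by $g'$ permutes the elements of the conjugacy class $C_\lambda$ (if $h\in C_\lambda$ then $g'hg'^{-1}\in C_\lambda$, and this map is a bijection of $C_\lambda$ onto itself with inverse conjugation by $g'^{-1}$). Hence $g'\mathbf{C}_\lambda g'^{-1}=\sum_{h\in C_\lambda}g'hg'^{-1}=\sum_{h\in C_\lambda}h=\mathbf{C}_\lambda$, which gives $g'\mathbf{C}_\lambda=\mathbf{C}_\lambda g'$ for all $g'\in G$, so $\mathbf{C}_\lambda\in Z(\mathbb{C}[G])$.

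It remains to check that the $\mathbf{C}_\lambda$ span $Z(\mathbb{C}[G])$. Take $x=\sum_{g\in G}a_g\, g\in Z(\mathbb{C}[G])$. For any $g'\in G$ the condition $g'x=xg'$ rewrites as $g'xg'^{-1}=x$, i.e. $\sum_{g\in G}a_g\,(g'gg'^{-1})=\sum_{g\in G}a_g\,g$; reindexing the left-hand sum by $h=g'gg'^{-1}$ shows $a_{g'^{-1}hg'}=a_h$ for all $h\in G$. Thus the function $g\mapsto a_g$ is constant on each conjugacy class. Writing $a_\lambda$ for the common value on $C_\lambda$, we get $x=\sum_{\lambda\in\mathcal{I}}a_\lambda\,\mathbf{C}_\lambda$, as desired.

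There is no real obstacle here; the only point requiring a little care is the reindexing step establishing that the coefficient function is a class function, which is the standard ``conjugation permutes each class'' argument. One could alternatively phrase the whole proof via the conjugation action of $G$ on $\mathbb{C}[G]$: the $\mathbf{C}_\lambda$ are exactly the sums over $G$-orbits of basis vectors, and taking $G$-invariants of a permutation representation always yields the orbit-sums as a basis of the fixed subspace. I would present the elementary version above since it is self-contained.
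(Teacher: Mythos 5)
Your proof is correct and follows essentially the same route as the paper: the key step in both is that a central element $x=\sum_g a_g g$ must have coefficients constant on conjugacy classes, so the class sums span, with linear independence coming from the disjointness of the classes. You are in fact slightly more complete than the paper, since you also verify explicitly that each $\mathbf{C}_\lambda$ lies in the center, a point the paper leaves implicit.
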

\begin{proof}
Suppose that $X=\sum_{g\in G} c_gg$ is an element of the centre of the group $G$ algebra, then by definition, for any $g'\in G$ we have: 
$$\sum_{g\in G} c_gg=\sum_{g\in G} c_gg'gg'^{-1}.$$
This means that for any two elements $g$ and $g'$ of $G,$ the coefficients of $g$ and $g'gg'^{-1}$ in the expansion of $X$ are equals. Therefore, to be in $Z(\mathbb{C}[G]),$ the coefficients of any element $g$ in $X$ must be equal to the coefficient of any element conjugated to $g$ in $X.$ Thus, the sums of the elements of the conjugacy classes of $G$ form a basis (it is not difficult to see that these sums are linearly independent) for $Z(\mathbb{C}[G]).$ 
\end{proof}

%\begin{proof}
%Supposons que $X=\sum_{g\in G} c_gg$ est un élément du centre de l'algèbre $G,$ alors pour tout $g'\in G$ on a par définition 
%$$g'\bullet (\sum_{g\in G} c_gg)=\sum_{g\in G} c_gg'gg'^{-1}=\sum_{g\in G} c_gg.$$
%Ca veut dire que pour tout élément $g$ de $G$ et pour tout élément $g'$ de $G,$ les coefficients de $g$ et de $g'gg'^{-1}$ dans $X$ sont égaux. Donc pour que $X$ soit dans le centre de $G,$ il faut que pout tout élément $g$ de $G,$ le coefficient de $g$ dans $X$ soit égal au coefficient de tout élément conjugué à $g.$ Donc les sommes des classes de conjugaison engendrent $Z(\mathbb{C}[G])$ et il n'est pas difficile de voir qu'elles sont linéairement indépendentes.
%\end{proof}

Let $\lambda$ and $\delta$ be two elements of $\mathcal{I}.$ The structure coefficients $c_{\lambda\delta}^\rho$ of the center of the group algebra of $G$ are defined by the following equation :
\begin{equation}\label{coef_centre}
{\bf C}_\lambda{\bf C}_\delta=\sum_{\rho\in \mathcal{I}} c_{\lambda\delta}^\rho {\bf C}_g.
\end{equation}

\subsection{Double-class algebras}

%The double-class algebra of a sub-group $K$ in a finite group $G$ is the algebra over $\mathbb{C}$ with basis the sum of elements of the double-classes of $K$ in $G.$

Let $G$ be a finite group and let $H,$ $K$ and $L$ be three sub-groups of $G.$ A double-class of $H$ and $K$ in $G$ is a set $HgK$ for an element $g$ of $G$ where :
$$HgK:=\lbrace hgk \, ; \, h\in H \text{ and } k\in K\rbrace.$$
In the case where $H=K$ and in order to simplify we will use the notion of a double-class of $K$ in $G$ instead of a double-class of $K$ and $K$ in $G.$ The set $H\backslash G/K:=\lbrace HgK \, ; \, g\in G\rbrace$ is the set of double-classes of $H$ and $K$ in $G.$ We have the following well known proposition for the size of a double-class.

\begin{prop}If $H$ and $K$ are two sub-groups of a finite group $G,$ then for every $g\in G$ we have :
$$|HgK|=\frac{|H||K|}{|H\cap gKg^{-1}|}.$$
\end{prop}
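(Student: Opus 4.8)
The statement to prove is: for subgroups $H, K$ of a finite group $G$ and any $g \in G$,
$$|HgK| = \frac{|H||K|}{|H \cap gKg^{-1}|}.$$

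The natural approach is to realize $HgK$ as an orbit under a group action and apply the orbit–stabiliser theorem. First I would let the group $H \times K$ act on $G$ by the rule $(h,k)\cdot x = hxk^{-1}$; one checks this is a genuine left action. The orbit of the point $g$ under this action is exactly $\{hgk^{-1} : h \in H, k \in K\} = HgK$ (since $K$ is a subgroup, $k$ ranges over $K$ iff $k^{-1}$ does). Hence by orbit–stabiliser, $|HgK| = |H \times K| / |\operatorname{Stab}(g)| = |H||K| / |\operatorname{Stab}(g)|$, and everything reduces to identifying the stabiliser.

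Next I would compute $\operatorname{Stab}(g) = \{(h,k) \in H \times K : hgk^{-1} = g\}$. The equation $hgk^{-1} = g$ is equivalent to $h = gkg^{-1}$, so a pair $(h,k)$ lies in the stabiliser precisely when $k \in K$ and $gkg^{-1} = h \in H$, i.e. when $k \in K \cap g^{-1}Hg$ and $h$ is then determined as $h = gkg^{-1}$. Thus the projection $(h,k) \mapsto k$ is a bijection from $\operatorname{Stab}(g)$ onto $K \cap g^{-1}Hg$, giving $|\operatorname{Stab}(g)| = |K \cap g^{-1}Hg|$. Finally, conjugation by $g$ is a bijection $G \to G$ carrying $K \cap g^{-1}Hg$ onto $gKg^{-1} \cap H$, so $|\operatorname{Stab}(g)| = |H \cap gKg^{-1}|$, and substituting into the orbit–stabiliser count yields the claimed formula.

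There is really no serious obstacle here — the only points requiring care are the bookkeeping with inverses (making sure the action is a left action and that one replaces $k$ by $k^{-1}$ correctly so that the orbit is $HgK$ rather than $Hg^{-1}K$ or similar), and noting that $|K \cap g^{-1}Hg| = |H \cap gKg^{-1}|$ via the conjugation bijection. An alternative, more hands-on argument avoids group actions entirely: fix the surjection $H \times K \to HgK$, $(h,k) \mapsto hgk$, and show each fibre has size $|H \cap gKg^{-1}|$ by checking that $(h,k)$ and $(h',k')$ have the same image iff $h^{-1}h' = gk(k')^{-1}g^{-1} \in H \cap gKg^{-1}$, so the fibres are cosets of a subgroup of that size; then $|HgK| = |H||K|/|H \cap gKg^{-1}|$. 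Either route is short; I would present the orbit–stabiliser version as the cleanest.
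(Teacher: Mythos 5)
Your proof is correct, but it is organised differently from the paper's. You make $H\times K$ act on $G$ itself by $(h,k)\cdot x = hxk^{-1}$, identify $HgK$ as a single orbit, compute the stabiliser of $g$ as the ``graph'' $\lbrace (gkg^{-1},k)\,:\,k\in K\cap g^{-1}Hg\rbrace$, and then use the conjugation bijection to replace $|K\cap g^{-1}Hg|$ by $|H\cap gKg^{-1}|$; orbit--stabiliser does the rest in one shot. The paper instead argues in two stages: it first notes that $HgK$ is a disjoint union of left cosets $hgK$, each of size $|K|$, and then counts the number of distinct such cosets by letting $H$ alone act on the left coset space $G/K$, where the stabiliser of the point $gK$ is $\lbrace h\in H : hgK=gK\rbrace = H\cap gKg^{-1}$, so the factor $|H\cap gKg^{-1}|$ appears directly without any conjugation step. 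Both routes are short and rest on orbit--stabiliser; yours packages the whole count into a single action at the cost of the small bookkeeping with inverses and the identification $|K\cap g^{-1}Hg|=|H\cap gKg^{-1}|$, which you handle correctly, while the paper's version keeps the two factors $|K|$ and $|H|/|H\cap gKg^{-1}|$ visibly separate, reflecting the decomposition of a double class into left classes that it reuses later (e.g.\ in showing the double-class sums span a subalgebra). Your alternative fibre-counting remark is also sound and essentially re-proves orbit--stabiliser by hand for this action.
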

\begin{proof}
Since $|hgK|=|K|$ for any $h\in H,$ we have :
$$|HgK|=|\lbrace hgK \text{ such that } h\in H\rbrace||K|.$$
If we consider the action of the sub-group $H$ on the set of the left classes of $K$ in $G,$ we get :
$$|HgK|=\frac{|H|}{|\lbrace h\in H \text{ such that }hgK=gK\rbrace|}|K|.$$
But $hgK=gK\Leftrightarrow h\in gKg^{-1},$ thus we get the result.
\end{proof}
 
\begin{prop}
Let $G$ be a finite group and $K$ a sub-group of $G.$ The sums of the double-classes $KgK$ of $K$ in $G$ linearly generate a sub-algebra of $\mathbb{C}[G].$
\end{prop}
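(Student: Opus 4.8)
The plan is to identify the linear span of the double-class sums with the subspace of $\mathbb{C}[G]$ consisting of the elements invariant under left and right translation by $K$, and then to check directly that this subspace is stable under the product of $\mathbb{C}[G]$. This parallels closely the proof of Proposition~\ref{prop:base_centre_alg-de_grp} for conjugacy-class sums.

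First I would fix a set of representatives $g_1,\dots,g_r$ of the double-classes of $K$ in $G$ and write $\mathbf{Kg_iK}=\sum_{x\in Kg_iK}x$. Since the double-classes $Kg_1K,\dots,Kg_rK$ partition $G$, the family $(\mathbf{Kg_iK})_{1\le i\le r}$ is linearly independent in $\mathbb{C}[G]$; let $\mathcal{B}$ denote its linear span. For an arbitrary $X=\sum_{g\in G}c_g\,g\in\mathbb{C}[G]$ and $k,k'\in K$ one has $kXk'=\sum_{g\in G}c_g\,kgk'=\sum_{g\in G}c_{k^{-1}gk'^{-1}}\,g$, so, comparing coefficients, $kXk'=X$ for all $k,k'\in K$ if and only if the coefficient function $g\mapsto c_g$ is constant on every double-class $KgK$. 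Exactly as for $Z(\mathbb{C}[G])$, this shows that $\mathcal{B}$ is precisely the subspace
\[
\mathcal{B}=\{X\in\mathbb{C}[G]\ :\ kXk'=X\ \text{ for all }k,k'\in K\}.
\]

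Next I would show this subspace is stable under multiplication. Note first that taking $k'$ to be the identity of $K$ gives $kX=X$ for $k\in K$, and taking $k$ the identity gives $Xk'=X$ for $k'\in K$. Hence, if $X,Y\in\mathcal{B}$ and $k,k'\in K$, then $k(XY)k'=(kX)(Yk')=XY$, so $XY\in\mathcal{B}$. Combined with the previous step, the product of any two double-class sums lies again in $\mathcal{B}$, which is the assertion. Equivalently, and more explicitly, one may set $e=\tfrac{1}{|K|}\mathbf{K}$ where $\mathbf{K}=\sum_{k\in K}k$: a short computation gives $e^2=e$, and $\mathbf{Kg_iK}$ is a positive rational multiple of $eg_ie$ (the multiplicity of each $x\in Kg_iK$ in $\mathbf{K}g_i\mathbf{K}$ being the fixed number $|K\cap g_iKg_i^{-1}|$), so that $\mathcal{B}=e\,\mathbb{C}[G]\,e$ and closure under multiplication, as well as the existence of the unit $e$, is immediate from $e^2=e$.

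There is no genuine obstacle here: the only point requiring a little care is the first step, namely the identification of $\mathcal{B}$ with the space of $(K,K)$-bi-invariant elements, together with the observation that the sums $\mathbf{Kg_iK}$ are linearly independent — which follows at once from the disjointness of distinct double-classes. Everything else is a direct coefficient comparison, in complete analogy with the argument that the conjugacy-class sums form a basis of the center $Z(\mathbb{C}[G])$.
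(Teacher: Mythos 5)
Your proof is correct, but it takes a genuinely different route from the paper's. The paper argues directly on the products: it decomposes each double-class into a disjoint union of left cosets, $HxK=\bigsqcup_i Hx_i$ with $x_i\in xK$, and of right cosets, $KyL=\bigsqcup_j y_jL$ with $y_j\in Ky$, so that the product of the two double-class sums becomes $\sum_{i,j}\mathbf{H}x_iy_j\mathbf{L}$, each term being a (positive integer) multiple of the double-class sum of $Hx_iy_jL$; the statement follows by specialising $H=L=K$. You instead identify the span of the double-class sums with the $(K,K)$-bi-invariant subspace $\{X\in\mathbb{C}[G]\,:\,kXk'=X\ \text{for all}\ k,k'\in K\}$ --- the exact analogue of the paper's characterisation of $Z(\mathbb{C}[G])$ as the conjugation-invariant subspace in Proposition \ref{prop:base_centre_alg-de_grp} --- after which multiplicative closure is the one-line computation $k(XY)k'=(kX)(Yk')=XY$; your variant via the idempotent $e=\frac{1}{|K|}\mathbf{K}$ and the identification of the span with $e\,\mathbb{C}[G]\,e$ is also sound, and your multiplicity count $|K\cap g_iKg_i^{-1}|$ relating $\mathbf{K}g_i\mathbf{K}$ to the double-class sum is exactly right. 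Both arguments are complete. Yours is shorter and structural, parallels the center case, and exhibits the unit $e$ of the double-class algebra for free; the paper's coset computation is more explicit, holds in the greater generality of three subgroups $H,K,L$ (products of $(H,K)$- with $(K,L)$-double-class sums), and stays closer to the combinatorial description of the structure coefficients of Proposition \ref{desc_coef} that is exploited later in the paper.
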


\begin{proof}
In a more general case, if $H$ and $L$ are two sub-groups of $G$, by using the decomposition of double-classes into disjoint union of left and right classes, we can write :
$$\textbf{H}x\textbf{K}=\sum_i \textbf{H}x_i,$$
where the $x_i$'s are in the set $xK.$ For an another double-class $KyL$ of $K$ and $L$ in $G$, we can also write:
$$\textbf{K}y\textbf{L}=\sum_j y_j\textbf{L},$$
where the $y_j$'s are in the set $Ky.$
By using these two decompositions, the multiplication of two double-classes $\textbf{H}x\textbf{K}$ and $\textbf{K}y\textbf{L}$ can be written as follows :
\begin{eqnarray*}
{\textbf{H}} x {\textbf{K}}\textbf{K}y\textbf{L}&=& \sum_i \textbf{H}x_i \sum_j y_j\textbf{L}\\
&=&\sum_i\sum_j\textbf{H}x_iy_j\textbf{L}.
%&=&\sum_{Hg \in H\setminus G} n_g \textbf{H}g\textbf{L},
\end{eqnarray*}
%où $n_g=\mid\lbrace (x_i,y_j) \, \text{ tel que } x_iy_j\in Hg\, \rbrace\mid.$ 
We get our result in the particular case where $H=L=K.$
\end{proof}
 
The \textit{double-class algebra} of $K$ in $G,$ denoted by $\mathbb{C}[K\setminus G/K],$ is the algebra with basis the sums of elements of the double-classes of $K$ in $G.$ Since we consider $G$ as finite group, the cardinal of $H\backslash G/K$ is finite. We will use a finite set $\mathcal{J}$ to index the set $K\setminus G/K$ of double-classes of $K$ in $G.$ We will use also the Greek letters to represent the elements of $\mathcal{J}.$ Thus, we have :
$$K\setminus G/K=\lbrace DC_\lambda \mid \lambda\in \mathcal{J}\rbrace.$$
Let $\lambda$ and $\delta$ be two elements of $\mathcal{J}.$ The structure coefficients $k^\rho_{\lambda\delta}$ of the double-class algebra $\mathbb{C}[K\setminus G/K]$ are defined by the following equation :
\begin{equation}
\textbf{DC}_\lambda\textbf{DC}_\delta=\sum_{\rho \in \mathcal{J}}k^\rho_{\lambda\delta}\textbf{DC}_\rho.
\end{equation}

\section{irreducible characters and the Frobenius formula}\label{sec:irr_cha_fr_for}

The representation theory is a useful tool to describe the structure coefficients of the centers of finite group algebras. It is known that the number of irreducible $G$-modules is equal to the number of conjugacy classes of $G$ which is the dimension of the center $Z(\mathbb{C}[G])$ of the group algebra of $G$ as shown in Proposition \ref{prop:base_centre_alg-de_grp}. More than this, the Frobenius formula presented at the end of this section writes the structure coefficients of $Z(\mathbb{C}[G])$ in terms of irreducible characters of $G,$ which links the study of these structure coefficients to the representation theory of finite groups.

Throughout this section, we will give important results relating the representation theory of finite groups to that of structure coefficients. These results were already well known in the literature. We remember them here in order to give similar results for Gelfand pairs in the next sections.

We denote by $\hat{G}$ the set of irreducible $G$-modules :
$$\hat{G}:=\lbrace X \text{ such that $X$ is an irreducible $G$-module}\rbrace.$$
%\begin{definition}
%Let $X$ be a $G$-module. The character of $X$ denoted by $\mathcal{X}$ is the function $\mathcal{X}:G\longrightarrow \mathbb{C}$ defined by :
%$$\mathcal{X}(g):=\tr (X(g)),$$ 
%for every $g\in G,$ where $\tr$ is the trace function.
%\end{definition} 
If $X$ is a $G$-module, we will denote $\mathcal{X}$ its character. For a conjugacy class $\mathcal{K}\in \mathcal{C}(G)$ and a character $\mathcal{X},$ we denote by $\mathcal{X}_{\mathcal{K}}$ the value of $\mathcal{X}$ on any element of $\mathcal{K},$ 
$$\mathcal{X}_{\mathcal{K}}:=\mathcal{X}(g) \text{ for every }g\in \mathcal{K}.$$

Using Maschke theorem, see \cite[Theorem 1.5.3]{sagan2001symmetric}, we have the following proposition.
%\begin{theoreme}\label{th:dec_alg_de_grp}
%Let $G$ be a finite group and let 
%\begin{equation*}
%\mathbb{C}[G]=\bigoplus_{X\in \hat{G}}m_XX
%\end{equation*}
%be the decomposition of $\mathbb{C}[G]$ as sum of irreducible $G$-modules, then we have : $m_X=\dim X$ for every $X\in \hat{G}.$ As a result, $\sum_{X\in \hat{G}}(\dim X)^2=|G|.$
%\end{theoreme}
\begin{prop}\label{dec_alg_de_grp}
Let $G$ be a finite group. Its associated group algebra $\mathbb{C}[G]$ can be decomposed as follows :
$$\mathbb{C}[G]=\bigoplus_{X\in \hat{G}}m_XX.$$
Moreover, we have :
\begin{enumerate}
\item[1-] $m_X=\dim X$ for every $X\in \hat{G}.$
\item[2-] $\dim \mathbb{C}[G]=|G|=\sum_{X\in \hat{G}}(\dim X)^2.$
\end{enumerate}
\end{prop}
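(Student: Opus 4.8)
The statement to prove is Proposition \ref{dec_alg_de_grp}: the decomposition $\mathbb{C}[G]=\bigoplus_{X\in\hat G} m_X X$ with $m_X=\dim X$ and the consequence $|G|=\sum(\dim X)^2$.

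\medskip

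The plan is to invoke the semisimplicity of $\mathbb{C}[G]$ and identify the multiplicities by a dimension/character computation. First I would observe that $\mathbb{C}[G]$ is itself a $G$-module under left multiplication — the (left) regular representation — so by Maschke's theorem it is completely reducible and hence isomorphic to a direct sum $\bigoplus_{X\in\hat G} m_X X$ for some non-negative integers $m_X$; this is exactly where the cited \cite[Theorem 1.5.3]{sagan2001symmetric} enters. It remains to pin down the $m_X$.

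\medskip

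For the value of the multiplicities, I would use characters. Writing $\rho$ for the character of the regular representation $\mathbb{C}[G]$, the key computation is that $\rho(1_G)=|G|$ while $\rho(g)=0$ for $g\neq 1_G$, since left multiplication by $g\neq 1_G$ fixes no basis element $g'$ of $G$ and hence has zero trace in the permutation basis. Then, by the orthogonality of irreducible characters, $m_X=\langle \rho,\mathcal{X}\rangle=\frac{1}{|G|}\sum_{g\in G}\rho(g)\overline{\mathcal{X}(g)}=\frac{1}{|G|}\cdot|G|\cdot\overline{\mathcal{X}(1_G)}=\mathcal{X}(1_G)=\dim X$, which gives part (1). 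Part (2) then follows immediately: taking dimensions on both sides of the decomposition, $\dim\mathbb{C}[G]=|G|=\sum_{X\in\hat G} m_X\dim X=\sum_{X\in\hat G}(\dim X)^2$.

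\medskip

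I do not expect a genuine obstacle here, since this is a classical fact; the only point requiring a little care is whether one wants to present it purely in terms of characters (as above) or more structurally via the Artin–Wedderburn decomposition $\mathbb{C}[G]\cong\bigoplus_X \mathrm{End}(X)$, reading off $m_X=\dim X$ from the block of matrices $\mathrm{End}(X)\cong M_{\dim X}(\mathbb{C})$. Given the paper's stated reliance on \cite{sagan2001symmetric} and Maschke's theorem, I would go with the character-theoretic argument, as it keeps the prerequisites minimal and foreshadows the orthogonality relations that the later Gelfand-pair analogues will generalise.
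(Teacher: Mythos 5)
Your proof is correct, and it follows exactly the route the paper itself relies on: the paper gives no argument of its own, merely invoking Maschke's theorem via \cite[Theorem 1.5.3]{sagan2001symmetric}, and your decomposition of the regular representation together with the character-orthogonality computation of the multiplicities is the standard proof behind that citation. Nothing further is needed.
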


Another interesting result in representation theory is the following :
\begin{prop}\label{egalite_classe_caract}
Let $G$ be a finite group. Then we have :
$$|\mathcal{C}(G)|=|\hat{G}|.$$
\end{prop}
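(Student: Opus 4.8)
The plan is to exploit the decomposition of the group algebra from Proposition \ref{dec_alg_de_grp} together with the characterisation of $Z(\mathbb{C}[G])$ given in Proposition \ref{prop:base_centre_alg-de_grp}. First I would observe that, by Proposition \ref{prop:base_centre_alg-de_grp}, the family $({\bf C}_\lambda)_{\lambda \in \mathcal{I}}$ is a basis of $Z(\mathbb{C}[G])$, so $\dim Z(\mathbb{C}[G]) = |\mathcal{C}(G)|$. It therefore suffices to prove that $\dim Z(\mathbb{C}[G]) = |\hat G|$, and the whole argument is reduced to computing this dimension in a second way.

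The key step is to pass the center through the Wedderburn-type decomposition $\mathbb{C}[G] = \bigoplus_{X \in \hat G} m_X X$ from Proposition \ref{dec_alg_de_grp}, which, since $m_X = \dim X$, realises $\mathbb{C}[G]$ as a direct sum of full matrix algebras, $\mathbb{C}[G] \cong \bigoplus_{X \in \hat G} M_{\dim X}(\mathbb{C})$ as $\mathbb{C}$-algebras. The center of a direct product of algebras is the product of their centers, and the center of a full matrix algebra $M_d(\mathbb{C})$ is one-dimensional, spanned by the identity matrix (this is a standard fact, provable directly: a matrix commuting with all elementary matrices $E_{ij}$ must be a scalar). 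Hence $Z(\mathbb{C}[G]) \cong \bigoplus_{X \in \hat G} Z(M_{\dim X}(\mathbb{C}))$ has dimension exactly $|\hat G|$. Combining this with $\dim Z(\mathbb{C}[G]) = |\mathcal{C}(G)|$ from the previous paragraph yields $|\mathcal{C}(G)| = |\hat G|$.

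The main obstacle is really just the bookkeeping of why $Z(\mathbb{C}[G])$ decomposes compatibly with the algebra isomorphism of Proposition \ref{dec_alg_de_grp} — i.e. that the abstract direct-sum decomposition of the \emph{module} $\mathbb{C}[G]$ into isotypic components is in fact a decomposition of $\mathbb{C}[G]$ as an \emph{algebra} into two-sided ideals isomorphic to matrix algebras. Once one grants the Wedderburn structure theorem (which is implicit in Maschke's theorem plus Schur's lemma, already invoked before Proposition \ref{dec_alg_de_grp}), the remaining computation of $\dim Z(M_d(\mathbb{C})) = 1$ is elementary. An alternative route avoiding explicit Wedderburn theory would be to use the orthogonality relations for irreducible characters: the irreducible characters $\{\mathcal{X} : X \in \hat G\}$, viewed as class functions, are orthonormal hence linearly independent, and they span the space of all class functions, whose dimension is $|\mathcal{C}(G)|$; this again gives $|\hat G| = |\mathcal{C}(G)|$. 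I would present the Wedderburn argument as the primary proof since the decomposition of $\mathbb{C}[G]$ has just been stated, and mention the character-theoretic argument as a remark.
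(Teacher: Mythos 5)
Your proof is correct. The paper gives no argument of its own for this proposition --- it simply cites the proof of Proposition 1.10.1 in \cite{sagan2001symmetric} --- and your argument (computing $\dim Z(\mathbb{C}[G])$ once via the class-sum basis of Proposition \ref{prop:base_centre_alg-de_grp} and once via the Wedderburn decomposition of $\mathbb{C}[G]$ into matrix blocks, each contributing a one-dimensional center) is exactly the standard proof found in that reference, so you are following essentially the same approach as the paper's intended one.
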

\begin{proof}
For a complete proof of this proposition see the proof of Proposition 1.10.1 in \cite{sagan2001symmetric}.
\end{proof}

It follows from this Proposition and from Proposition \ref{prop:base_centre_alg-de_grp} that the dimension of the centre $Z(\mathbb{C}[G])$ of a finite group algebra is equal to the number of irreducible representations of $G.$ For the rest of this section we will give all important results which leads us to the Frobenius formula which writes the structure coefficients of $Z(\mathbb{C}[G])$ in terms of irreducible characters.
 
\begin{prop}\label{caract_norma}
Let $V$ be an irreducible $G$-module, the function $\frac{\mathcal{V}}{\dim V}$ defined by 
$$\begin{array}{ccccc}
\frac{\mathcal{V}}{\dim V} & : & Z(\mathbb{C}[G]) & \to & \mathbb{C} \\
& & x & \mapsto & \frac{\mathcal{V}(x)}{\dim V} \\
\end{array}$$
is a morphism of algebras.
\end{prop}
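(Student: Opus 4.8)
The plan is to invoke Schur's lemma. Let $\rho\colon \mathbb{C}[G]\to \operatorname{End}(V)$ be the algebra homomorphism obtained by extending linearly the representation $G\to GL(V)$ afforded by $V$, so that by definition $\mathcal{V}(x)=\tr\big(\rho(x)\big)$ for every $x\in\mathbb{C}[G]$ (the value $\mathcal{V}(x)$ being understood via this linear extension). First I would note that if $x\in Z(\mathbb{C}[G])$, then $\rho(x)$ commutes with $\rho(g)$ for every $g\in G$, hence $\rho(x)$ is a $G$-module endomorphism of $V$. Since $V$ is irreducible, Schur's lemma gives $\operatorname{End}_G(V)=\mathbb{C}\,\Id_V$, so there is a scalar $\lambda(x)\in\mathbb{C}$ with $\rho(x)=\lambda(x)\,\Id_V$. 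Taking traces on both sides yields $\mathcal{V}(x)=\lambda(x)\dim V$, that is,
$$\lambda(x)=\frac{\mathcal{V}(x)}{\dim V}.$$
In other words, the map in the statement is exactly $\rho$ restricted to $Z(\mathbb{C}[G])$, followed by the canonical algebra isomorphism $\mathbb{C}\,\Id_V\xrightarrow{\ \sim\ }\mathbb{C}$.

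From here the conclusion is a routine verification. Linearity of $\frac{\mathcal{V}}{\dim V}$ follows from linearity of $\rho$ and of the trace. For multiplicativity, for $x,y\in Z(\mathbb{C}[G])$ we have $\rho(xy)=\rho(x)\rho(y)=\lambda(x)\lambda(y)\,\Id_V$, so $\lambda(xy)=\lambda(x)\lambda(y)$, which is precisely
$$\frac{\mathcal{V}(xy)}{\dim V}=\frac{\mathcal{V}(x)}{\dim V}\cdot\frac{\mathcal{V}(y)}{\dim V}.$$
Finally, the unit of $Z(\mathbb{C}[G])$ is the identity element $e$ of $G$, and $\rho(e)=\Id_V$ forces $\lambda(e)=1$, so the morphism is unital. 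Hence $\frac{\mathcal{V}}{\dim V}$ is a morphism of algebras.

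The only genuinely substantive step is the appeal to Schur's lemma guaranteeing that $\rho(x)$ acts as a scalar on $V$ when $x$ is central; I would state this explicitly (or cite the standard reference) before using it. Everything after that — linearity, multiplicativity, and preservation of the unit — is immediate from the fact that $\rho$ is an algebra homomorphism and that $x\mapsto\lambda(x)$ reads off the scalar by which $\rho(x)$ acts.
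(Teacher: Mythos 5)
Your proof is correct and follows exactly the route the paper intends: the paper's proof is the one-line remark that the proposition ``can be obtained using Schur lemma,'' and your argument is precisely the standard fleshing-out of that remark (central elements act as scalars on the irreducible module, and the trace identifies the scalar as $\mathcal{V}(x)/\dim V$).
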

\begin{proof}
This proposition can be obtained using Schur lemma.
\end{proof}

This proposition can be otherwise written as follows.
\begin{prop}\label{isom_centre_fct}
Let $G$ be a finite group, the function 
$$\begin{array}{ccccc}
Z(\mathbb{C}[G]) & \to & \mathcal{F}(\hat{G},\mathbb{C}) \\
x & \mapsto & (V\mapsto \frac{\mathcal{V}(x)}{\dim V}) \\
\end{array}$$
is an isomorphism of algebras where $\mathcal{F}(\hat{G},\mathbb{C})$ is the set of functions defined on $\hat{G}$ with values in $\mathbb{C}.$
\end{prop}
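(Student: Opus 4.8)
The statement to prove is Proposition~\ref{isom_centre_fct}: the map $Z(\mathbb{C}[G]) \to \mathcal{F}(\hat{G},\mathbb{C})$ sending $x \mapsto (V \mapsto \mathcal{V}(x)/\dim V)$ is an algebra isomorphism. The plan is to show it is (a) a well-defined algebra homomorphism, (b) injective, and (c) surjective, and then invoke that both sides have the same finite dimension to finish — though (b) and (c) together already suffice and equality of dimensions gives a cleaner route once either one is known.

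**Key steps in order.**

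First I would note that $\mathcal{F}(\hat{G},\mathbb{C})$ is a commutative algebra under pointwise operations of dimension $|\hat{G}|$, and that by Proposition~\ref{prop:base_centre_alg-de_grp} together with Proposition~\ref{egalite_classe_caract} we have $\dim Z(\mathbb{C}[G]) = |\mathcal{C}(G)| = |\hat{G}| = \dim \mathcal{F}(\hat{G},\mathbb{C})$. Second, for the homomorphism property: the map in question is just the ``assembled'' version of the family of maps $\frac{\mathcal{V}}{\dim V}$ from Proposition~\ref{caract_norma}, one coordinate for each $V \in \hat{G}$; since each coordinate is an algebra morphism by that proposition, and multiplication in $\mathcal{F}(\hat{G},\mathbb{C})$ is coordinatewise, the assembled map is an algebra morphism (it also clearly sends the identity $\textbf{C}_{\{e\}}$-type element, i.e.\ the unit $e$ of $\mathbb{C}[G]$ which lies in $Z(\mathbb{C}[G])$, to the constant function $1$, since $\mathcal{V}(e) = \dim V$). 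Third, for injectivity: if $x \in Z(\mathbb{C}[G])$ satisfies $\mathcal{V}(x) = 0$ for all irreducible $V$, then $\mathcal{V}(x) = 0$ for every $G$-module $V$ by additivity of characters over direct sums; taking $V = \mathbb{C}[G]$ the regular representation, the regular character is faithful in the sense that $\mathcal{V}_{\mathrm{reg}}(g) = 0$ for $g \neq e$ and $= |G|$ for $g = e$, so writing $x = \sum_g c_g g$ one gets $0 = \mathcal{V}_{\mathrm{reg}}(x) \cdot$ (appropriate normalization) forcing each $c_g = 0$ — more directly, the trace pairing $\langle x, y\rangle = \mathcal{V}_{\mathrm{reg}}(xy^{*})$ is nondegenerate on $\mathbb{C}[G]$, hence on $Z(\mathbb{C}[G])$, so $x=0$. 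Finally, injectivity of a linear map between vector spaces of equal finite dimension gives surjectivity, hence bijectivity, and since it is already an algebra morphism it is an algebra isomorphism.

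**Main obstacle.**

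The only genuinely substantive point is injectivity, i.e.\ that the irreducible characters of $G$, restricted to (equivalently, viewed as linear functionals on) $Z(\mathbb{C}[G])$, \emph{separate points}. The cleanest way to get this is to use the Wedderburn/Maschke decomposition already recorded in Proposition~\ref{dec_alg_de_grp}: $\mathbb{C}[G] \cong \bigoplus_{X \in \hat G} \mathrm{End}(X)$ as algebras, under which $Z(\mathbb{C}[G])$ maps isomorphically onto $\bigoplus_{X} \mathbb{C}\cdot \mathrm{Id}_X$, and the functional $\mathcal{V}/\dim V$ is exactly the projection onto the $V$-component followed by the scalar identification $\mathbb{C}\cdot \mathrm{Id}_V \cong \mathbb{C}$. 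Once phrased this way the whole proposition becomes transparent, but I would at minimum cite Proposition~\ref{dec_alg_de_grp} (Maschke) and Schur's lemma for the identification of $Z(\mathbb{C}[G])$ with the span of the central idempotents. Alternatively one can avoid Wedderburn entirely and argue by dimension count plus the elementary observation that distinct irreducible characters are linearly independent (first orthogonality relations), which forces the coordinate functionals to span the dual of $Z(\mathbb{C}[G])$ — I would likely present whichever of these the earlier sections have set up most directly, here the Maschke decomposition of Proposition~\ref{dec_alg_de_grp}.
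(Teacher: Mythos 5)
Your proof is correct, and its skeleton is the same as the paper's: the morphism property is quoted from Proposition \ref{caract_norma}, and the dimension count $\dim Z(\mathbb{C}[G])=|\mathcal{C}(G)|=|\hat{G}|=\dim \mathcal{F}(\hat{G},\mathbb{C})$ comes from Propositions \ref{prop:base_centre_alg-de_grp} and \ref{egalite_classe_caract}. Where you differ is that the paper's proof stops there, while you go on to establish injectivity; this extra step is genuinely needed, since an algebra morphism between algebras of equal finite dimension need not be bijective (the unital morphism $(a,b)\mapsto (a,a)$ of $\mathbb{C}\times\mathbb{C}$ is a counterexample), so the paper is implicitly relying on something like your argument --- or on the linear independence of the irreducible characters --- without stating it. Your Wedderburn formulation is the cleanest version: under $\mathbb{C}[G]\cong\bigoplus_{X\in\hat{G}}\mathrm{End}(X)$ the centre is $\bigoplus_{X}\mathbb{C}\,\mathrm{Id}_X$ and the map of the proposition is exactly coordinatewise evaluation, hence bijective. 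One caveat on your first injectivity sketch: $\chi_{\mathrm{reg}}(x)=0$ by itself gives only $|G|c_e=0$, i.e.\ $c_e=0$, not ``each $c_g=0$''; to make the regular-representation route work you must use (via Schur's lemma, or via Proposition \ref{caract_norma} applied to $\mathcal{V}(xy^{*})$ for central $y$) that such an $x$ acts as zero on every irreducible module, hence on the regular representation, so $x=x\cdot 1=0$ --- which is precisely what your nondegenerate-pairing and Wedderburn formulations do supply, so the gap is only in that first phrasing, not in the proof you ultimately give.
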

\begin{proof}
The Proposition \ref{caract_norma} implies that this function is a morphism of algebras. It still to remark, using Proposition \ref{egalite_classe_caract}, that
\begin{equation*}
\dim Z(\mathbb{C}[G])=|\mathcal{C}(G)|=|\hat{G}|=\dim \mathcal{F}(\hat{G},\mathbb{C}). \qedhere
\end{equation*} 
\end{proof}

%Let $X\in \hat{G}$ be an irreducible representation of $G.$ We associate to $X$ the following element $e_X$:
%\begin{equation}
%e_X:=\frac{\mathcal{X}(1_G)}{|G|}\sum_{g\in G}\mathcal{X}(g^{-1})g.
%\end{equation}
%It can be shown that $(e_X)_{X\in \hat{G}}$ is a family of idempotent elements of $\mathcal{F}(\hat{G},\mathbb{C}).$ Using this fact, one can prove the Frobenius formula given in the following theorem.

\begin{theoreme}[Frobenius]\label{coef_fct_cara}
Let $G$ be a finite group and let $\mathcal{I}$ be the finite set of elements which indexes the set of conjugacy classes of $G.$ Let $\lambda, \delta$ and $\rho$ be three elements of $\mathcal{I},$ the structure coefficient $c_{\lambda\delta}^\rho$ of the center of the group $G$ algebra can be written in terms of irreducible characters of $G$ as follows :
\begin{equation}
c_{\lambda\delta}^\rho=\frac{|C_\lambda||C_\delta|}{|G|}\sum_{X\in \hat{G}}\frac{\mathcal{X}_\lambda\mathcal{X}_\delta\overline{\mathcal{X}_\rho}}{\mathcal{X}(1_G)}.
\end{equation}
\end{theoreme}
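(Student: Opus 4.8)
The plan is to transport the defining relation \eqref{coef_centre} of the structure coefficients through the algebra isomorphism of Proposition \ref{isom_centre_fct}, where the product becomes pointwise multiplication of functions on $\hat{G}$, and then to invert the resulting linear system using the orthogonality of irreducible characters.

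First I would compute the image of a basis element $\mathbf{C}_\lambda$ under the map $x\mapsto(V\mapsto\mathcal{V}(x)/\dim V)$. Since the character $\mathcal{X}$ of an irreducible module $X$ is constant on conjugacy classes and is extended by linearity to $\mathbb{C}[G]$, one gets $\mathcal{X}(\mathbf{C}_\lambda)=|C_\lambda|\,\mathcal{X}_\lambda$, so the image of $\mathbf{C}_\lambda$ is the function $X\mapsto |C_\lambda|\,\mathcal{X}_\lambda/\mathcal{X}(1_G)$. Applying the isomorphism to both sides of $\mathbf{C}_\lambda\mathbf{C}_\delta=\sum_{\rho\in\mathcal{I}}c_{\lambda\delta}^\rho\mathbf{C}_\rho$ and using that it carries the product of $Z(\mathbb{C}[G])$ to the pointwise product in $\mathcal{F}(\hat{G},\mathbb{C})$, I obtain, for every $X\in\hat{G}$,
$$\frac{|C_\lambda|\,\mathcal{X}_\lambda}{\mathcal{X}(1_G)}\cdot\frac{|C_\delta|\,\mathcal{X}_\delta}{\mathcal{X}(1_G)}=\sum_{\rho\in\mathcal{I}}c_{\lambda\delta}^\rho\,\frac{|C_\rho|\,\mathcal{X}_\rho}{\mathcal{X}(1_G)}.$$

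It then remains to solve this system for the $c_{\lambda\delta}^\rho$. Fixing $\rho_0\in\mathcal{I}$, I would multiply the displayed identity by $\overline{\mathcal{X}_{\rho_0}}\,\mathcal{X}(1_G)/|G|$ and sum over $X\in\hat{G}$. On the right-hand side the column orthogonality relation for the irreducible characters of $G$, namely $\sum_{X\in\hat{G}}\mathcal{X}_\rho\overline{\mathcal{X}_{\rho_0}}=\tfrac{|G|}{|C_\rho|}\,\delta_{\rho,\rho_0}$ (recall $|G|/|C_\rho|=|Stab_g|$ for $g\in C_\rho$), collapses the sum over $\rho$ to the single term $\rho=\rho_0$ and leaves exactly $c_{\lambda\delta}^{\rho_0}$; the left-hand side becomes $\frac{|C_\lambda||C_\delta|}{|G|}\sum_{X\in\hat{G}}\frac{\mathcal{X}_\lambda\mathcal{X}_\delta\overline{\mathcal{X}_{\rho_0}}}{\mathcal{X}(1_G)}$, which is the claimed formula.

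The argument is essentially formal once Proposition \ref{isom_centre_fct} is available; the only external input is the orthogonality of irreducible characters. Accordingly, the point to be careful about is not any real obstacle but the bookkeeping of normalisations: it is precisely the factor $|G|/|C_\rho|$ in the column orthogonality relation that produces the prefactor $|C_\lambda||C_\delta|/|G|$ in the final expression. One could instead avoid invoking orthogonality explicitly, by remarking that $(\mathbf{C}_\lambda)_{\lambda\in\mathcal{I}}$ is a basis of $Z(\mathbb{C}[G])$ whose image is a basis of $\mathcal{F}(\hat{G},\mathbb{C})$, so that the coordinates $c_{\lambda\delta}^\rho$ are uniquely determined by the displayed identities; but orthogonality delivers the explicit inversion directly.
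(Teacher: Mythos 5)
Your argument is correct: applying the morphism of Proposition \ref{caract_norma} (equivalently, the isomorphism of Proposition \ref{isom_centre_fct}) to $\mathbf{C}_\lambda\mathbf{C}_\delta=\sum_\rho c_{\lambda\delta}^\rho\mathbf{C}_\rho$, using $\mathcal{X}(\mathbf{C}_\lambda)=|C_\lambda|\mathcal{X}_\lambda$, and inverting with the column orthogonality relation $\sum_{X\in\hat{G}}\mathcal{X}_\rho\overline{\mathcal{X}_{\rho_0}}=\frac{|G|}{|C_\rho|}\delta_{\rho\rho_0}$ does give exactly the stated formula, with the normalisations landing where you say they do. However, this is not how the paper treats this statement: the theorem is given there without proof, with references to Zagier's appendix in \cite{lando2004graphs} and \cite[Lemma 3.3]{JaVi90}, and the paper's own derivation appears only later, in Section \ref{sec:FrobeniustheorembyusingtheGelfandpair}, where the formula is recovered as a special case of the general Gelfand-pair result (Theorem \ref{Th_tt}) applied to $(G\times G^{opp},\diag(G))$, using $|C'_\lambda|=|G||C_\lambda|$, $c'^\rho_{\lambda\delta}=|G|c^\rho_{\lambda\delta}$ and the identification of the zonal spherical functions of that pair with the normalised irreducible characters. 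Your route is the classical, self-contained one: it needs only the multiplicativity of normalised characters on $Z(\mathbb{C}[G])$ plus second orthogonality, and it is in fact the center-of-group-algebra specialisation of the mechanism the paper uses for general Gelfand pairs (your orthogonality inversion plays the role of the idempotent expansion $\mathbf{DC}_\lambda=|DC_\lambda|\sum_\psi\omega^\psi_\lambda E_\psi$ of Proposition \ref{eq:dc_foncsphe}). What the paper's detour buys instead is the conceptual point of the article, namely that the Frobenius formula is literally an instance of the double-class formula for the Gelfand pair $(G\times G^{opp},\diag(G))$; what your proof buys is independence from that machinery, which also makes it suitable as an ingredient rather than a consequence of the general theorem.
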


The Frobenius formula given in Theorem \ref{coef_fct_cara} is a particular case of a generalised one which describes the product of more than two basis elements, see the appendix of Don Zagier in \cite{lando2004graphs}. This formula is given also in \cite[Lemma 3.3]{JaVi90}. It is used by many authors to compute the structure coefficients of the center of the symmetric group algebra.

\section{Structure coefficients of centers of group algebras and random variables} 

Using Proposition \ref{dec_alg_de_grp} of the last section, we can define a probability measure on the set $\hat{G}$ of irreducible $G$-modules of a finite group $G$ by the following manner. 

\begin{prop}
Let $G$ be a finite group. The function $\mathrm{P}_G:\hat{G}\rightarrow \mathbb{R}$ defined by :
$$\mathrm{P}_G(X)=\frac{(\dim X)^2}{|G|}$$
is a probability measure on $\hat{G}.$
\end{prop}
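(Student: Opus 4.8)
The plan is to verify directly that $\mathrm{P}_G$ is a probability measure, which amounts to two things: non-negativity of each value, and that the values sum to $1$ over $\hat{G}$. First I would observe that non-negativity is immediate, since $\dim X$ is a positive integer for every irreducible $G$-module $X$, so $(\dim X)^2 \geq 0$, and $|G|$ is a positive integer as $G$ is a finite group; hence $\mathrm{P}_G(X) = \frac{(\dim X)^2}{|G|} \geq 0$ (in fact strictly positive).

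The remaining point is the normalisation. Here I would simply invoke part 2 of Proposition \ref{dec_alg_de_grp}, which gives $|G| = \sum_{X\in \hat{G}}(\dim X)^2$. Dividing both sides by $|G|$ yields
\begin{equation*}
\sum_{X\in \hat{G}}\mathrm{P}_G(X) = \sum_{X\in \hat{G}}\frac{(\dim X)^2}{|G|} = \frac{1}{|G|}\sum_{X\in \hat{G}}(\dim X)^2 = \frac{|G|}{|G|} = 1.
\end{equation*}
Since $\hat{G}$ is a finite set (being in bijection with $\mathcal{C}(G)$ by Proposition \ref{egalite_classe_caract}), a non-negative real-valued function on $\hat{G}$ summing to $1$ is by definition a probability measure, and the proof is complete.

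There is essentially no obstacle here: the content of the statement is entirely carried by Proposition \ref{dec_alg_de_grp}, and the proposition is really just a repackaging of the decomposition of the regular representation into a probabilistic statement — this is the standard Plancherel measure on $\hat{G}$. The only thing to be careful about is that one genuinely needs the algebra to be over $\mathbb{C}$ (or an algebraically closed field of characteristic zero) for $m_X = \dim X$ to hold, but this is already built into the hypotheses of Proposition \ref{dec_alg_de_grp} as stated in the excerpt, so nothing further is required.
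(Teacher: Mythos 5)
Your proof is correct and follows exactly the paper's route: the normalisation is obtained by dividing the identity $|G|=\sum_{X\in \hat{G}}(\dim X)^2$ from Proposition \ref{dec_alg_de_grp} by $|G|$, which is precisely what the paper does. Your added remarks on non-negativity and finiteness of $\hat{G}$ are fine and simply make explicit what the paper leaves implicit.
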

\begin{proof}
The fact that 
$$\sum_{X\in \hat{G}}\mathrm{P}_G(X)=\sum_{X\in \hat{G}}\frac{(\dim X)^2}{|G|}=1$$
is a consequence of Proposition \ref{dec_alg_de_grp} of the precedent section.
\end{proof}

The probability measure $\mathrm{P}_{G}$ is called the \textit{Plancherel measure}. Now fix an element $g$ of $G$ and consider the random variable $\mathfrak{F}_g$ defined as follows :
 
$$\begin{array}{ccccc}
\mathfrak{F}_g & : & \hat{G} & \to & \mathbb{R} \\
& & X & \mapsto & \frac{\mathcal{X}(g)}{\dim X} \\
\end{array}$$

The aim of this section is to study the random variable $\mathfrak{F}_g$ and to show the relation between the moments of this random variable and the structure coefficients of the center of the group $G$ algebra. It was very helpful for me to use the notes of a course given by Féray in Edinburgh, see \cite{feray2014}, in order to present this relation here.

The expectation $\mathbb{E}_{\mathrm{P}_{G}}(\mathfrak{F}_g)$ of the random variable $\mathfrak{F}_g$ is defined as follows :
$$\mathbb{E}_{\mathrm{P}_{G}}(\mathfrak{F}_g):=\sum_{X\in \hat{G}}\mathrm{P}_G(X)\mathfrak{F}_g(X).$$

\begin{prop}\label{esprerance}
Let $G$ be a finite group and let $g$ be a fixed element of $G,$ then we have :
$$\mathbb{E}_{\mathrm{P}_{G}}(\mathfrak{F}_g)=\delta_{\lbrace 1_G\rbrace}(g).$$
\end{prop}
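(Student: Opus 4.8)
The plan is to compute the expectation directly from its definition by expanding $\mathrm{P}_G(X)$ and $\mathfrak{F}_g(X)$ and then recognizing the resulting sum. Explicitly,
$$
\mathbb{E}_{\mathrm{P}_{G}}(\mathfrak{F}_g)=\sum_{X\in \hat{G}}\frac{(\dim X)^2}{|G|}\cdot\frac{\mathcal{X}(g)}{\dim X}=\frac{1}{|G|}\sum_{X\in \hat{G}}(\dim X)\,\mathcal{X}(g)=\frac{1}{|G|}\sum_{X\in \hat{G}}\mathcal{X}(1_G)\,\mathcal{X}(g),
$$
using that $\dim X=\mathcal{X}(1_G)$. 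So the whole statement reduces to showing that $\sum_{X\in\hat G}\mathcal{X}(1_G)\mathcal{X}(g)$ equals $|G|$ when $g=1_G$ and $0$ otherwise.

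The natural way to see this is via the second orthogonality relation for irreducible characters (column orthogonality): for $g,h\in G$,
$$
\sum_{X\in\hat G}\mathcal{X}(g)\overline{\mathcal{X}(h)}=\begin{cases}|C_G(g)| & \text{if $g$ and $h$ are conjugate},\\ 0 & \text{otherwise},\end{cases}
$$
where $C_G(g)$ is the centralizer of $g$. Taking $h=1_G$, and noting $\mathcal{X}(1_G)=\dim X$ is a real (in fact positive integer) number so the conjugate is irrelevant, we get $\sum_{X\in\hat G}\mathcal{X}(g)\mathcal{X}(1_G)=|C_G(1_G)|=|G|$ if $g=1_G$, and $0$ if $g\neq 1_G$ (since then $g$ is not conjugate to $1_G$). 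Substituting into the computation above gives $\mathbb{E}_{\mathrm{P}_{G}}(\mathfrak{F}_g)=\delta_{\lbrace 1_G\rbrace}(g)$, as desired.

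Alternatively, if one wishes to stay closer to the tools already developed in the excerpt rather than invoking column orthogonality, one can use Proposition \ref{dec_alg_de_grp}: decomposing the regular representation $\mathbb{C}[G]=\bigoplus_{X\in\hat G}(\dim X)\,X$ and taking characters yields that the character of the regular representation at $g$ is $\sum_{X\in\hat G}(\dim X)\mathcal{X}(g)$; on the other hand the regular representation has character $|G|$ at $1_G$ and $0$ elsewhere (each basis element $h\mapsto gh$ is a fixed point of the permutation action iff $gh=h$, i.e. iff $g=1_G$). Equating the two expressions gives exactly the sum we need.

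I do not anticipate a serious obstacle here; the only thing to be careful about is the direction of the conjugation/the reality of $\mathcal{X}(1_G)$ (so that no complex conjugates are needed), and making sure the orthogonality relation is quoted in the column form rather than the row form. Either route — column orthogonality or the character of the regular representation — is essentially a one-line argument once the definitions of $\mathrm{P}_G$ and $\mathfrak{F}_g$ are unwound.
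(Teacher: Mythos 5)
Your proof is correct. Note, however, that the paper itself states Proposition \ref{esprerance} without any proof at all, so there is no argument of the author's to compare yours against; your write-up simply supplies the missing justification. Both of your routes are valid: the expansion $\mathbb{E}_{\mathrm{P}_{G}}(\mathfrak{F}_g)=\frac{1}{|G|}\sum_{X\in \hat{G}}(\dim X)\,\mathcal{X}(g)$ is immediate from the definitions, and the identity $\sum_{X\in \hat{G}}(\dim X)\,\mathcal{X}(g)=|G|\,\delta_{\lbrace 1_G\rbrace}(g)$ follows either from column orthogonality or, as in your second argument, from evaluating the character of the regular representation, whose decomposition $\mathbb{C}[G]=\bigoplus_{X\in \hat{G}}(\dim X)X$ is exactly Proposition \ref{dec_alg_de_grp} of the paper. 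The second route is preferable in context, since it uses only tools the paper has already invoked (Maschke's theorem via Proposition \ref{dec_alg_de_grp}) rather than quoting the second orthogonality relation, and your care about the reality of $\mathcal{X}(1_G)$, while harmless, is not even needed on that route.
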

%\begin{proof}
%Après simplification, on a :
%$$\mathbb{E}_{\mathrm{P}_{G}}(\mathfrak{F}_g)=\sum_{X\in \hat{G}}\frac{\dim X}{|G|}\mathcal{X}(g).$$
%Le membre à droite de cette égalité est égal à $\frac{\mathcal{X}^{reg}(g)}{|G|}$ d'après le Théorème \ref{th:dec_alg_de_grp} du chapitre \ref{chapitre1}. Le résultat est donc une conséquence de l'Exemple \ref{ex:calcul_carac_reg} du premier chapitre.
%\end{proof}

Let $g$ and $g'$ be two elements of $G$ and consider the set $\mathcal{I}$ which indexes the conjugacy classes of $G.$ We suppose that $g\in C_\lambda$ and $g'\in C_\delta$ where $\lambda$ and $\delta$ are two elements of $\mathcal{I}.$ Since the characters of $G$ are constant on the conjugacy classes of $G$, the definition of $\mathfrak{F}$ can be extended to the conjugacy classes by the following way :
\begin{equation}\label{eq:var_ale}
\mathfrak{F}_{ {\bf{C}_\lambda} }(X):=|C_\lambda|\mathfrak{F}_g(X),
\end{equation}
for every $X\in \hat{G}.$ In addition, for every $X\in \hat{G}$ we have :
\begin{eqnarray*}
\mathfrak{F}_g(X)\mathfrak{F}_{g'}(X)&=&\frac{1}{|C_\lambda|}\mathfrak{F}_{ {\bf{C}_\lambda} }(X)\frac{1}{|C_\delta|}\mathfrak{F}_{ {\bf{C}_\delta} }(X)\\
&=&\frac{1}{|C_\lambda||C_\delta|}\frac{\mathcal{X}({\bf{C}_\lambda})}{\dim X}\frac{\mathcal{X}({\bf{C}_\delta})}{\dim X}\\
&=&\frac{1}{|C_\lambda||C_\delta|}\frac{\mathcal{X}({\bf{C}_\lambda}{\bf{C}_\delta})}{\dim X} ~~~~~~~~(\text{by Proposition \ref{caract_norma}})\\
&=&\sum_{\rho\in \mathcal{I}}c_{\lambda\delta}^\rho\frac{1}{|C_\lambda||C_\delta|}\frac{\mathcal{X}({\bf{C}_\rho})}{\dim X}\\
&=&\sum_{\rho\in \mathcal{I}}c_{\lambda\delta}^\rho\frac{|C_\rho|}{|C_\lambda||C_\delta|}\mathfrak{F}_{g_\rho}(X)~~~~~~~~(\text{by Equation \eqref{eq:var_ale}})
\end{eqnarray*} 
where $g_\rho$ is an arbitrary element of $C_\rho$ for every $\rho\in \mathcal{I}.$ 

\begin{prop}\label{prop:esp_ordre_2}
Let $G$ be a finite group and let $g\in C_\lambda$ and $g'\in C_\delta$ where $\lambda$ and $\delta$ are two elements of the finite set $\mathcal{I}$ which indexes the conjugacy classes of $G.$ We have :
$$\mathbb{E}_{\mathrm{P}_{G}}(\mathfrak{F}_{g}\mathfrak{F}_{g'})=\frac{c_{\lambda\delta}^{\emptyset}}{|C_\lambda||C_\delta|},$$
where $c_{\lambda\delta}^{\emptyset}$ is the (trivial) structure coefficient of ${\bf{C}}_{1_G}$ in the expansion of the product ${\bf{C}}_\lambda {\bf{C}}_\delta.$
\end{prop}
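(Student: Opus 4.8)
The plan is simply to apply the expectation $\mathbb{E}_{\mathrm{P}_{G}}$ to the identity for $\mathfrak{F}_g\mathfrak{F}_{g'}$ that was obtained just above the statement. Recall that for every $X\in\hat{G}$ one has
$$\mathfrak{F}_g(X)\mathfrak{F}_{g'}(X)=\sum_{\rho\in\mathcal{I}}c_{\lambda\delta}^\rho\,\frac{|C_\rho|}{|C_\lambda||C_\delta|}\,\mathfrak{F}_{g_\rho}(X),$$
where $g_\rho$ denotes an arbitrary element of $C_\rho$. As $\mathcal{I}$ is finite and the coefficients $c_{\lambda\delta}^\rho|C_\rho|/(|C_\lambda||C_\delta|)$ do not depend on $X$, the linearity of the expectation gives
$$\mathbb{E}_{\mathrm{P}_{G}}(\mathfrak{F}_g\mathfrak{F}_{g'})=\sum_{\rho\in\mathcal{I}}c_{\lambda\delta}^\rho\,\frac{|C_\rho|}{|C_\lambda||C_\delta|}\,\mathbb{E}_{\mathrm{P}_{G}}(\mathfrak{F}_{g_\rho}).$$

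The next step is to evaluate each first moment by Proposition \ref{esprerance}, which yields $\mathbb{E}_{\mathrm{P}_{G}}(\mathfrak{F}_{g_\rho})=\delta_{\lbrace 1_G\rbrace}(g_\rho)$, equal to $1$ if $g_\rho=1_G$ and to $0$ otherwise. Since $1_G$ forms its own conjugacy class, there is exactly one index $\rho\in\mathcal{I}$, namely the index $\emptyset$ with $C_\emptyset=\lbrace 1_G\rbrace$, for which $g_\rho=1_G$, and for this index $|C_\emptyset|=1$. Therefore only the term $\rho=\emptyset$ survives in the sum, and we obtain
$$\mathbb{E}_{\mathrm{P}_{G}}(\mathfrak{F}_g\mathfrak{F}_{g'})=c_{\lambda\delta}^\emptyset\,\frac{|C_\emptyset|}{|C_\lambda||C_\delta|}=\frac{c_{\lambda\delta}^\emptyset}{|C_\lambda||C_\delta|},$$
as desired.

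I do not expect any genuine obstacle: the proposition is an immediate consequence of the character computation recalled above combined with the first-moment formula of Proposition \ref{esprerance}. The only point that needs a moment of attention is to make sure that the surviving index is indeed the trivial one $\emptyset$ and that its conjugacy class has size $1$, so that the prefactor $|C_\emptyset|/(|C_\lambda||C_\delta|)$ collapses to $1/(|C_\lambda||C_\delta|)$; the rest is nothing but linearity of the expectation.
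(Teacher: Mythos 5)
Your proof is correct and follows essentially the same route as the paper: apply $\mathbb{E}_{\mathrm{P}_G}$ by linearity to the expansion $\mathfrak{F}_g\mathfrak{F}_{g'}=\sum_{\rho}c_{\lambda\delta}^\rho\frac{|C_\rho|}{|C_\lambda||C_\delta|}\mathfrak{F}_{g_\rho}$ and then invoke Proposition \ref{esprerance}. The only difference is that you spell out explicitly that the surviving index is the trivial class with $|C_\emptyset|=1$, a detail the paper leaves implicit.
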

\begin{proof}
We have :
$$\mathfrak{F}_{g}\mathfrak{F}_{g'}=\sum_{\rho\in \mathcal{I}}c_{\lambda\delta}^\rho\frac{|C_\rho|}{|C_\lambda||C_\delta|}\mathfrak{F}_{g_\rho},$$
where $g_\rho$ is an arbitrary element of $C_\rho$ for every $\rho\in \mathcal{I}.$ Taking the expectations, we get :
$$\mathbb{E}_{\mathrm{P}_{G}}(\mathfrak{F}_{g}\mathfrak{F}_{g'})=\sum_{\rho\in \mathcal{I}}c_{\lambda\delta}^\rho\frac{|C_\rho|}{|C_\lambda||C_\delta|}\mathbb{E}_{\mathrm{P}_{G}}(\mathfrak{F}_{g_\rho}).$$ By Proposition \ref{esprerance}, we have $\mathbb{E}_{\mathrm{P}_{G}}(\mathfrak{F}_{g_\rho})=\delta_{\lbrace 1_G\rbrace}(g_\rho),$ thus we get the result.
\end{proof}

Next, we will show how the structure coefficients of the center of the group $G$ algebra intervene in the computation of $\mathbb{E}_{\mathrm{P}_G}(\mathfrak{F}^m_{g})$ for $m\geq 2.$

We have already showed that for every $X\in \hat{G}$ we have :
\begin{equation*}
\mathfrak{F}_g(X)\mathfrak{F}_{g'}(X)=\sum_{\rho\in \mathcal{I}}c_{\lambda\delta}^\rho\frac{|C_\rho|}{|C_\lambda||C_\delta|}\mathfrak{F}_{g_\rho}(X),
\end{equation*}
where $g_\rho$ is an arbitrary element of $C_\rho$ for any $\rho\in \mathcal{I}.$ This implies that for every $g\in C_\lambda$ and for any $X\in \hat{G},$ we have :
\begin{equation}\label{eq:moment_ordre_2}
\mathfrak{F}^2_g(X)=\sum_{\rho\in \mathcal{I}}c_{\lambda\lambda}^\rho\frac{|C_\rho|}{|C_\lambda||C_\lambda|}\mathfrak{F}_{g_\rho}(X),
\end{equation}
where $g_\rho$ is an arbitrary element of $C_\rho$ for any $\rho\in \mathcal{I}.$ If we multiply Equation \eqref{eq:moment_ordre_2} by $\mathfrak{F}_g(X),$ we get :
\begin{equation}
\mathfrak{F}^3_g(X)=\sum_{\rho\in \mathcal{I}}\sum_{\rho'\in \mathcal{I}}c_{\lambda\lambda}^\rho\frac{|C_\rho|}{|C_\lambda||C_\lambda|}c_{\rho\lambda}^{\rho'}\frac{|C_{\rho'}|}{|C_\rho||C_\lambda|}\mathfrak{F}_{g_{\rho'}}(X),
\end{equation}
where $g_{\rho'}$ is an arbitrary element of $C_{\rho'}$ for any $\rho'\in \mathcal{I}.$ Using Proposition \ref{esprerance}, we get :
$$\mathbb{E}_{\mathrm{P}_G}(\mathfrak{F}^3_{g})=\sum_{\rho\in \mathcal{I}}c_{\lambda\lambda}^\rho\frac{|C_\rho|}{|C_\lambda||C_\lambda|}c_{\rho\lambda}^{\emptyset}\frac{1}{|C_\rho||C_\lambda|}=c_{\lambda\lambda}^\lambda c_{\lambda\lambda}^{\emptyset}\frac{1}{|C_\lambda|^3}=\frac{c_{\lambda\lambda}^\lambda}{|C_\lambda|^2},$$
for any $g\in C_\lambda.$ Likewise, we can prove that :
\begin{equation*}
\mathbb{E}_{\mathrm{P}_G}(\mathfrak{F}^4_{g})=\sum_{\rho\in \mathcal{I}}\frac{c_{\lambda\lambda}^\rho c_{\rho\lambda}^\lambda}{|C_\lambda|^3},
\end{equation*}
for any $g\in C_\lambda.$ This highlights the fact that the mth-moments of the random variable $\mathfrak{F}_{g}$ for $m\geq 3$ can be expressed in terms of the structure coefficients of the center of the group $G$ algebra.

The reader should remark the particular structure coefficients that appear while computing the mth-moments of the random variable $\mathfrak{F}_{g}.$ These coefficients are:
$$c_{\lambda\lambda}^\emptyset,\,\, c_{\lambda\delta}^\emptyset,\,\, c_{\lambda\lambda}^\lambda,\,\, c_{\lambda\lambda}^\delta\text{ et } c_{\lambda\delta}^\delta.$$  
It will be then interesting to see if there exists explicit formulas for these coefficients in particular cases. In the most studied case where $G$ is the symmetric group $\mathcal{S}_n$ the set $\mathcal{I}$ is the set of partitions of size $n.$ If $\lambda$ and $\delta$ are two partitions of $n,$ there exists explicit formulas for the coefficients $c_{\lambda\emptyset}^{\delta}$ et $c_{(2,1^{n-2})\lambda}^\delta,$ see \cite{touPhd14}. Since every permutation of $\mathcal{S}_n$ is conjugated with itself, it is not difficult to prove that $c_{\lambda\delta}^\emptyset$ is the size of the conjugacy class of $\mathcal{S}_n$ associated to $\lambda$ if $\lambda=\delta$ and zero otherwise. 

We don't know any explicit formula for the other particular structure coefficients in the case of $\mathcal{S}_n.$ The structure coefficients $c_{\lambda\delta}^\rho$ of the center of the symmetric group algebra are (by multiplication by the size of the appropriate conjugacy class) symmetric on the partitions $\lambda$, $\delta$ and $\rho.$ This is an immediate consequence of the Frobenius theorem. Thus, an explicit formula for $c_{\lambda\lambda}^\delta$ implies an explicit formula for $c_{\lambda\delta}^\delta$ and vice-versa. It will be then interesting to try to find explicit formulas for the two following structure coefficients of the center of the symmetric group algebra:
$$c_{\lambda\lambda}^\lambda\text{ and } c_{\lambda\lambda}^\delta,$$  
where $\lambda$ and $\delta$ are two partitions of $n.$
It appears that an explicit formula exists for $c_{\lambda\lambda}^\delta$
when $\lambda$ is of the form $(2^k,1)$ \cite{Goupil-Personal}.

\section{Gelfand pairs and zonal spherical functions}\label{sec:paire_des_Gelfand}  

In \cite[chapitre VII]{McDo}, the reader can find a detailed introduction to the general theory of Gelfand pairs and zonal spherical functions. Here, we present this theory as a useful tool (similar to that of irreducible representations in the case of centers of finite group algebras) to describe the structure coefficients of double-class algebras. 

Throughout this section, we give generalisation of some properties of irreducible characters, presented in Section \ref{sec:irr_cha_fr_for}, to zonal spherical functions of Gelfand pairs. The most important among them is a Frobenius formula for the structure coefficients of the double-class algebras of Gelfand pairs which writes the structure coefficients in terms of zonal spherical functions.  

\subsection{Definitions}

We start this section by giving equivalent definitions for Gelfand pairs. Let $(G,K)$ be a pair where $G$ is a finite group and $K$ is a sub-group of $G.$ We consider the set $K\setminus G/ K$ of double-classes of $K$ in $G.$ We denote by $C(G,K)$ the set of functions $f:G\rightarrow \mathbb{C}$ which are constant on the double-classes of $K$ in $G.$ Explicitly, 
$$C(G,K):=\lbrace f:G\longrightarrow \mathbb{C} \text{ such that }f(kxk')=f(x) \text{ for every $x\in G$ and every $k,k'\in K$} \rbrace.$$
The set $C(G,K)$ is an algebra with multiplication defined by the convolution product of functions :
$$(fg)(x)=\sum_{y\in G}f(y)g(y^{-1}x)\text{ for any $f,g\in C(G,K)$}.$$

The algebra $C(G,K)$ is isomorphic to the double-class algebra $\mathbb{C}[K\setminus G/K].$ 
\begin{prop}\label{prop:isom}
The function $\psi$ defined as follows :
$$\begin{array}{ccccc}
\psi & : &\mathbb{C}[K\setminus G/ K] & \to & C(G,K) \\
& & x=\sum_{\lambda\in \mathcal{J}}x_\lambda \bf{DC}_\lambda & \mapsto &  \psi_x:=\sum_{\lambda\in \mathcal{J}}x_\lambda \delta_{DC_\lambda},\\
\end{array}$$
is an algebraic isomorphism.
\end{prop}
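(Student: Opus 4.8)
The strategy is to verify directly that $\psi$ is a linear bijection and that it respects products, using the concrete description of the two algebras. First I would note that both $\mathbb{C}[K\setminus G/K]$ and $C(G,K)$ have dimension equal to $|K\setminus G/K| = |\mathcal{J}|$: a basis of the former is $(\mathbf{DC}_\lambda)_{\lambda\in\mathcal{J}}$ by definition, and a basis of the latter is the family of indicator functions $(\delta_{DC_\lambda})_{\lambda\in\mathcal{J}}$, since a function constant on each double-class is determined by, and freely specified by, its values on the $|\mathcal{J}|$ double-classes. The map $\psi$ sends the basis element $\mathbf{DC}_\lambda$ to $\delta_{DC_\lambda}$, hence it is a linear isomorphism of vector spaces. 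So the only real content is multiplicativity.

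For multiplicativity it suffices, by bilinearity, to check that $\psi(\mathbf{DC}_\lambda \mathbf{DC}_\delta) = \psi_{\mathbf{DC}_\lambda} \psi_{\mathbf{DC}_\delta}$ in $C(G,K)$, where the product on the right is the convolution product. On the one hand, writing $\mathbf{DC}_\lambda \mathbf{DC}_\delta = \sum_{\rho\in\mathcal{J}} k^\rho_{\lambda\delta}\,\mathbf{DC}_\rho$ with the structure coefficients $k^\rho_{\lambda\delta}$, we get $\psi(\mathbf{DC}_\lambda\mathbf{DC}_\delta) = \sum_\rho k^\rho_{\lambda\delta}\,\delta_{DC_\rho}$. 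On the other hand, for $x\in G$,
\begin{equation*}
(\delta_{DC_\lambda}\,\delta_{DC_\delta})(x) = \sum_{y\in G} \delta_{DC_\lambda}(y)\,\delta_{DC_\delta}(y^{-1}x) = |\{\,y\in DC_\lambda \;:\; y^{-1}x\in DC_\delta\,\}| = |\{\,(u,v)\in DC_\lambda\times DC_\delta \;:\; uv = x\,\}|.
\end{equation*}
This count depends only on the double-class of $x$ (if $x' = kxk'$ then $(u,v)\mapsto(ku, vk')$ is a bijection between the two solution sets), so $\delta_{DC_\lambda}\delta_{DC_\delta}$ indeed lies in $C(G,K)$, and if $x\in DC_\rho$ the value equals the coefficient of $\mathbf{DC}_\rho$ in the product $\mathbf{DC}_\lambda\mathbf{DC}_\delta$ computed in $\mathbb{C}[G]$, i.e. $k^\rho_{\lambda\delta}$. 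Comparing, $\delta_{DC_\lambda}\delta_{DC_\delta} = \sum_\rho k^\rho_{\lambda\delta}\,\delta_{DC_\rho} = \psi(\mathbf{DC}_\lambda\mathbf{DC}_\delta)$, which is exactly what we wanted.

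Finally I would remark that $\psi$ sends the unit $\mathbf{DC}_{\lambda_0}$ (the double-class of $1_G$, namely $K$ itself) to $\delta_K$, which is the unit of $C(G,K)$ for the convolution product, so $\psi$ is a unital algebra isomorphism. The main (very mild) obstacle is bookkeeping: one must be careful that the convolution product as normalized in the paper matches the multiplication of the sums $\mathbf{DC}_\lambda$ viewed inside $\mathbb{C}[G]$, i.e. that no spurious factor of $|K|$ intrudes; the identification via the coefficient-counting above shows it does not. Everything else is routine linear algebra.
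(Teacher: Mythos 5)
Your proof is correct, and since the paper states Proposition \ref{prop:isom} without any proof, your direct verification (basis goes to basis, then multiplicativity checked on basis elements by counting pairs $(u,v)\in DC_\lambda\times DC_\delta$ with $uv=x$, which matches the structure coefficients $k^\rho_{\lambda\delta}$ via the paper's Proposition \ref{desc_coef}) is exactly the standard argument the paper implicitly relies on; in particular you correctly confirm that with the paper's unnormalised convolution no factor of $|K|$ appears. One small slip in your closing remark: $\mathbf{DC}_{\lambda_0}=\mathbf{K}$ is not the unit of $\mathbb{C}[K\setminus G/K]$ and $\delta_K$ is not the unit of $C(G,K)$; indeed $\mathbf{K}\,\mathbf{DC}_\lambda=|K|\,\mathbf{DC}_\lambda$ and $(\delta_K f)(x)=|K|f(x)$ for $f\in C(G,K)$, so the units are $\frac{1}{|K|}\mathbf{K}$ and $\frac{1}{|K|}\delta_K$ respectively. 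This does not affect your conclusion, since a multiplicative linear bijection automatically carries the unit to the unit, but the remark as stated is off by the very factor of $|K|$ you warn about.
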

%\begin{proof}
%Let $x=\sum_{\lambda\in \mathcal{J}}x_\lambda \bf{DC}_\lambda$ and $y=\sum_{\delta\in \mathcal{J}}y_\delta \bf{DC}_\delta$ be two elements of $\mathbb{C}[K\setminus G/ K].$ For every $g\in G,$ we have :
%$$\psi_{xy}(g)=\psi_x\psi_y(g)=\left\{
%\begin{array}{ll}
%  \sum_{\lambda\in \mathcal{J}}\sum_{\delta\in \mathcal{J}}x_\lambda y_\delta c_{\lambda\delta}^{\rho} & \qquad \mathrm{if}\quad g\in DC_\rho, \\
%  0 & \qquad \mathrm{ifnot .}\quad \\
% \end{array}
% \right.$$
%\end{proof}
\begin{definition}
A pair $(G,K)$ where $G$ is a finite group and $K$ is a sub-group of $G$ is a Gelfand pair if its associated algebra $C(G,K)$ (or equivalently $\mathbb{C}[K\setminus G/ K]$ using Proposition \ref{prop:isom}) is commutative.
\end{definition}

%\subsubsection{An equivalent definition of Gelfand pairs using the representation theory}

There is another equivalent definition for Gelfand pairs which uses the representation theory. %To give it, we need to recall some properties of modules.
We consider the $G$-module $\mathbb{C}[G/K]$ where the action of $G$ on the set of left classes is defined by:
$$g(k_iK)=(gk_i)K \text{ for every $g\in G$ and every representative $k_i$ of $G/K$}.$$

If $V$ is a $G$-module, it can also be seen, in a natural way, as a $K$-module using the restriction action of $G$ on $K.$ This $K$-module is known as $\Res_K^G V.$ In a more complicated way, if $V$ is a $K$-module, we can always build a $G$-module denoted $\Ind_K^GV$ using $V.$ The matrix definition of this $G$-module is as follows :
$$\Ind_K^GV(g)=(V(k_i^{-1}gk_j))_{i,j},$$
where the $k_i$ are the representatives of the set $G/K=\lbrace k_1K,k_2K,\cdots ,k_lK\rbrace.$ 
\begin{theoreme}[Frobenius]\label{frob}
Let $G$ be a finite group and let $K$ be a sub-group of $G.$ If $X$ is a $G$-module and $Y$ is a $K$-module then we have :
$$<\Ind_K^G \mathcal{Y}, \mathcal{X}>=<\mathcal{Y},\Res_K^G \mathcal{X}>$$
where $\Res_K^G \mathcal{X}$ is the character function of the $K$-module $\Res_K^G X$ and $\Ind_K^G \mathcal{Y}$ is the character function of the $G$-module $\Ind_K^G Y.$
\end{theoreme}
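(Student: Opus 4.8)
The plan is to prove the reciprocity by a direct computation with characters, starting from the matrix description of $\Ind_K^G$ given just above the statement. First I would extract from that description the classical formula for the induced character. Writing $G/K=\lbrace k_1K,\dots,k_lK\rbrace$, the matrix $\Ind_K^GY(g)=(Y(k_i^{-1}gk_j))_{i,j}$ is block-structured, with the $(i,j)$ block equal to the zero matrix whenever $k_i^{-1}gk_j\notin K$; taking the trace, only the diagonal blocks can contribute, so
$$\Ind_K^G\mathcal{Y}(g)=\sum_{\substack{1\le i\le l\\ k_i^{-1}gk_i\in K}}\mathcal{Y}(k_i^{-1}gk_i).$$
Since $\mathcal{Y}$ is a class function on $K$ and each coset $k_iK$ has $|K|$ elements, writing $x=k_i\kappa$ with $\kappa\in K$ and using $\mathcal{Y}(\kappa^{-1}k_i^{-1}gk_i\kappa)=\mathcal{Y}(k_i^{-1}gk_i)$ rewrites this in the averaged form
$$\Ind_K^G\mathcal{Y}(g)=\frac{1}{|K|}\sum_{\substack{x\in G\\ x^{-1}gx\in K}}\mathcal{Y}(x^{-1}gx),$$
which is the shape best suited to the next step.

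Next I would substitute this into the Hermitian inner product of class functions on $G$:
$$\langle \Ind_K^G\mathcal{Y},\mathcal{X}\rangle_G=\frac{1}{|G|}\sum_{g\in G}\Ind_K^G\mathcal{Y}(g)\,\overline{\mathcal{X}(g)}=\frac{1}{|G|\,|K|}\sum_{g\in G}\ \sum_{\substack{x\in G\\ x^{-1}gx\in K}}\mathcal{Y}(x^{-1}gx)\,\overline{\mathcal{X}(g)}.$$
The key manipulation is a change of summation variable: for each fixed $x\in G$ set $h=x^{-1}gx$; the constraint $x^{-1}gx\in K$ means exactly $g\in xKx^{-1}$, and $g\mapsto h$ is a bijection from $xKx^{-1}$ onto $K$. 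Because $\mathcal{X}$ is a class function on $G$, one has $\overline{\mathcal{X}(g)}=\overline{\mathcal{X}(xhx^{-1})}=\overline{\mathcal{X}(h)}$, so the inner double sum becomes $\sum_{x\in G}\sum_{h\in K}\mathcal{Y}(h)\,\overline{\mathcal{X}(h)}$, whose summand no longer depends on $x$. The sum over $x$ then contributes only a factor $|G|$, giving
$$\langle \Ind_K^G\mathcal{Y},\mathcal{X}\rangle_G=\frac{1}{|K|}\sum_{h\in K}\mathcal{Y}(h)\,\overline{\mathcal{X}(h)}=\frac{1}{|K|}\sum_{h\in K}\mathcal{Y}(h)\,\overline{\Res_K^G\mathcal{X}(h)}=\langle \mathcal{Y},\Res_K^G\mathcal{X}\rangle_K,$$
which is exactly the asserted identity.

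I expect the only delicate point to be the bookkeeping in that change of variables: one must verify that $(x,h)\mapsto(x,xhx^{-1})$ is a bijection from $G\times K$ onto the set of pairs $(x,g)$ with $x^{-1}gx\in K$, and keep careful track of the normalising constants $|G|$ and $|K|$ (and of the passage from a sum over coset representatives to a sum over all of $G$ in the first paragraph). Everything else is just the orthogonality/class-function formalism already recalled in Section \ref{sec:irr_cha_fr_for}. An essentially equivalent alternative would be to establish the module adjunction $\operatorname{Hom}_G(\Ind_K^GY,X)\cong\operatorname{Hom}_K(Y,\Res_K^GX)$ directly and then invoke $\langle\mathcal{Y}',\mathcal{X}'\rangle=\dim\operatorname{Hom}(Y',X')$ for the relevant modules; I would fall back on this only if the character computation seemed to obscure the underlying reason.
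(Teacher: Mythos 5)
Your proof is correct: the induced-character formula you derive from the block-matrix definition, the averaging over cosets, and the change of variables $h=x^{-1}gx$ together give exactly the standard argument. The paper itself does not prove this theorem but simply cites \cite[Theorem 1.12.6]{sagan2001symmetric}, and your computation is essentially the proof given there, so there is nothing to add beyond noting that the paper's inner product is written with $\mathcal{X}(g^{-1})$ rather than $\overline{\mathcal{X}(g)}$, which agrees with your Hermitian form since characters satisfy $\mathcal{X}(g^{-1})=\overline{\mathcal{X}(g)}$.
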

\begin{proof}
See \cite[Theorem 1.12.6]{sagan2001symmetric} for a complete proof of this theorem.
\end{proof}
In the case $V={1}$ is the trivial $K$-module where the action of $K$ is defined by $g\cdot 1=1,$ $\Ind_K^GV$ can be identified with the $G$-module $\mathbb{C}[G/K].$  
\begin{prop}
The $G$-module $\mathbb{C}[G/K]$ can be decomposed into irreducible $G$-modules in such way that each irreducible $G$-module appears at most once in its decomposition if and only if the pair $(G,K)$ is a Gelfand pair. 
\end{prop}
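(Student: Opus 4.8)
The plan is to route the proof through the endomorphism algebra $\mathrm{End}_{\mathbb{C}[G]}(\mathbb{C}[G/K])$: I will identify it on one side with the double-class algebra $\mathbb{C}[K\setminus G/K]$ and on the other side with a product of matrix algebras, so that the two descriptions of when it is commutative yield exactly the two halves of the claimed equivalence. Both directions of the ``if and only if'' will then fall out of a single chain of isomorphisms.

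First I would realise $\mathbb{C}[G/K]$ as a left ideal of $\mathbb{C}[G]$. Set $e_K:=\frac{1}{|K|}\sum_{k\in K}k\in\mathbb{C}[G]$; this is an idempotent, and $gK\mapsto ge_K$ extends to an isomorphism of $G$-modules $\mathbb{C}[G/K]\xrightarrow{\ \sim\ }\mathbb{C}[G]e_K$ (well defined and injective because $ge_K=g'e_K$ iff $g^{-1}g'\in K$). For an idempotent $e$ in a ring $R$ the map $\varphi\mapsto\varphi(e)$ is a ring isomorphism $\mathrm{End}_R(Re)\cong(eRe)^{\mathrm{op}}$; with $R=\mathbb{C}[G]$ and $e=e_K$ this gives $\mathrm{End}_{\mathbb{C}[G]}(\mathbb{C}[G/K])\cong(e_K\mathbb{C}[G]e_K)^{\mathrm{op}}$. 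Finally one checks that $e_K\mathbb{C}[G]e_K$, with the product induced from $\mathbb{C}[G]$, is exactly the double-class algebra $\mathbb{C}[K\setminus G/K]$: each $\mathbf{DC}_\lambda$ is fixed under left and under right multiplication by $K$, hence equals $e_K\mathbf{DC}_\lambda e_K\in e_K\mathbb{C}[G]e_K$, while conversely the coefficient function of any $e_Kxe_K$ is constant on the double classes of $K$. Since an algebra is commutative exactly when its opposite is, we obtain
\[
(G,K)\text{ is a Gelfand pair}\iff \mathbb{C}[K\setminus G/K]\text{ is commutative}\iff\mathrm{End}_{\mathbb{C}[G]}(\mathbb{C}[G/K])\text{ is commutative}.
\]

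Next I would feed in the decomposition into irreducibles. By Maschke's theorem write $\mathbb{C}[G/K]=\bigoplus_{X\in\hat{G}}m_X X$ with the $X$ pairwise non-isomorphic. Schur's lemma gives $\mathrm{Hom}_{\mathbb{C}[G]}(X,X')=0$ for $X\not\cong X'$ and $\mathrm{End}_{\mathbb{C}[G]}(X)=\mathbb{C}$ for irreducible $X$, so
\[
\mathrm{End}_{\mathbb{C}[G]}(\mathbb{C}[G/K])\cong\prod_{X\in\hat{G}}\mathrm{End}_{\mathbb{C}[G]}(X^{m_X})\cong\prod_{X\in\hat{G}}M_{m_X}(\mathbb{C}).
\]
A finite product of matrix algebras $\prod_{X}M_{m_X}(\mathbb{C})$ is commutative if and only if $m_X\le 1$ for every $X$: if some $m_X\ge 2$, projecting onto that factor is a surjective algebra morphism onto the non-commutative algebra $M_{m_X}(\mathbb{C})$, whereas if all $m_X\le 1$ every factor is $M_1(\mathbb{C})=\mathbb{C}$ and the product is commutative. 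Combining this with the chain of equivalences above gives precisely the statement: $(G,K)$ is a Gelfand pair if and only if every irreducible $G$-module occurs at most once in $\mathbb{C}[G/K]$.

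I expect the main obstacle to be the bookkeeping in the second paragraph — carefully establishing $\mathrm{End}_{\mathbb{C}[G]}(\mathbb{C}[G]e_K)\cong(e_K\mathbb{C}[G]e_K)^{\mathrm{op}}$ and the identification of $e_K\mathbb{C}[G]e_K$ with $\mathbb{C}[K\setminus G/K]$, while keeping track of the opposite-algebra convention (ultimately harmless, since only commutativity is at stake). As an alternative one can bypass endomorphism algebras and argue with characters: by Frobenius reciprocity (Theorem \ref{frob}) the multiplicity of $X$ in $\mathbb{C}[G/K]=\Ind_K^G\mathbf{1}$ equals $\langle\mathbf{1},\Res_K^G\mathcal{X}\rangle$; an orbit-counting argument gives $\dim\mathbb{C}[K\setminus G/K]=|K\setminus G/K|=\langle\Ind_K^G\mathbf{1},\Ind_K^G\mathbf{1}\rangle=\sum_X m_X^2$; and since $\mathbb{C}[K\setminus G/K]$ is semisimple, it is commutative if and only if its dimension equals the number of its distinct irreducible constituents, i.e. if and only if $\sum_X m_X^2=\#\{X\,:\,m_X\ge 1\}$, i.e. if and only if every $m_X\le 1$.
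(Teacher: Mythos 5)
Your proof is correct. Note that the paper itself does not prove this proposition; it simply refers to result (1.1) of Macdonald, Chapter VII, and your argument --- realising $\mathbb{C}[G/K]$ as $\mathbb{C}[G]e_K$ with $e_K=\frac{1}{|K|}\sum_{k\in K}k$, identifying $\mathrm{End}_{\mathbb{C}[G]}(\mathbb{C}[G]e_K)\cong(e_K\mathbb{C}[G]e_K)^{\mathrm{op}}\cong\mathbb{C}[K\setminus G/K]^{\mathrm{op}}$, and then invoking Schur's lemma to see that this algebra is $\prod_X M_{m_X}(\mathbb{C})$, commutative exactly when every $m_X\le 1$ --- is essentially the standard proof found in that reference, written out in full. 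So you have supplied a self-contained proof of a statement the paper only cites, which is a genuine improvement in completeness rather than a different route. One small caveat: in your alternative character-theoretic sketch, the step ``$\mathbb{C}[K\setminus G/K]$ is commutative iff its dimension equals the number of its distinct irreducible constituents'' still tacitly uses the identification of the double-class algebra with the endomorphism algebra (to know its simple factors are $M_{m_X}(\mathbb{C})$ for the $X$ with $m_X\ge 1$), so it is not really independent of your main argument; as a sketch it is fine, but the endomorphism-algebra chain is the part that carries the proof.
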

\begin{proof}
See the result (1.1) in \cite[page 389]{McDo}.
\end{proof}
Suppose that $(G,K)$ is a Gelfand pair and that :
$$\mathbb{C}[G/K]=\bigoplus_{i=1}^{s}X_i,$$
where $X_i$ are irreducible $G$-modules. We define the functions $\omega_i:G\rightarrow \mathbb{C},$ using the irreducible characters $\mathcal{X}_i,$ as follows :
\begin{equation}
\omega_i(x)=\frac{1}{|K|}\sum_{k\in K}\mathcal{X}_i(x^{-1}k) \text{ for every $x\in G.$} 
\end{equation} 
The functions $(\omega_i)_{1\leq i\leq s}$ are called the \textit{zonal spherical functions} of the pair $(G,K).$ They have a long list of important properties, see \cite[page 389]{McDo}. They form an orthogonal basis for $C(G,K)$ and they satisfy the following equality :
\begin{equation}\label{prop_fon_zon}
\omega_i(x)\omega_i(y)=\frac{1}{|K|}\sum_{k\in K}\omega_i(xky),
\end{equation}
for every $1\leq i\leq s$ and for every $x,y\in G.$ 

\subsection{An example: the Gelfand pair $(G\times G^{opp},\diag(G))$}\label{sec:GelfandpairGxGopp}

We use $G^{opp}$ to denote the opposite group of $G$ and $\diag(G):=\lbrace(x,x^{-1}) \mid x\in G\rbrace$ to denote the diagonal sub-group of $G\times G^{opp}.$ 

In this section we show that for a finite group $G,$ the pair $(G\times G^{opp},\diag(G))$ is a Gelfand pair and its zonal sphercal functions are in fact the normalised irreducible characters of $G.$ This is explained in the introduction of Strahov in \cite{strahov2007generalized} and in the Example 9 in \cite[Section VII.1]{McDo}.

We first recall that if we have two groups $G$ and $H$ and if $X$ is a $G$-module and $Y$ is a $H$-module we can build a $G\times H$-module denoted $X\otimes Y.$ The matrix definition of this $G\times H$-module is obtained using the tensor product of matrices. If $A$ and $B$ are two matrices, $A\otimes B$ is the matrix obtained by multiplying all the entries $a_{ij}$ of the matrix $A$ by the matrix $B,$

$$A\otimes B=\begin{pmatrix}
a_{11}B & a_{12}B & \cdots \\
a_{21}B & a_{22}B & \cdots \\
\vdots & \vdots & \ddots
\end{pmatrix}.$$

We denote by $\mathcal{X}\otimes \mathcal{Y}$ the character of the $G\times H$-module $X\otimes Y.$ By definition of the tensor product of matrices, we have :
$$\tr(A\otimes B)=\sum_ia_{ii}\tr(B)=\tr(A)\tr(B),$$
for any two matrices $A$ and $B.$ Thus, for every $(g,h)\in G\times H,$ we have :
$$\mathcal{X}\otimes \mathcal{Y}(g,h)=\mathcal{X}(g)\mathcal{Y}(h).$$

If we have all the irreducible $G$-modules and all the irreducible $H$-modules, we can give all the irreducible $G\times H$-modules using the tensor product of modules. From \cite[Theorem 1.11.3]{sagan2001symmetric}, one can obtain the following :

$$\widehat{G\times H}=\hat{G}\otimes \hat{H},$$
where $$\hat{G}\otimes \hat{H}:= \lbrace X\otimes Y \text{ where $X\in \hat{G}$ and $Y\in \hat{H}$}\rbrace.$$ 

If $X$ is a $G$-module, $X$ can also be considered as a $G^{opp}$-module for the action $\cdot$ defined as follows :
$$g\cdot x=g^{-1}x,$$
for every $g\in G$ and every $x\in X.$ The character of $X$ seen as a $G^{opp}$-module is then the function $\overline{\mathcal{X}}.$ Suppose that
$$\hat{G}=\lbrace X_i \text{ such that } 1\leq i \leq |\mathcal{C}(G)|\rbrace,$$
then we have : 
$$\widehat{G\times G^{opp}}=\lbrace X_i\otimes \overline{X_j}\text{ such that } 1\leq i,j \leq |\mathcal{C}(G)|\rbrace.$$
\begin{prop}
If $G$ is a finite group, the pair $(G\times G^{opp},\diag(G))$ is a Gelfand pair.
\end{prop}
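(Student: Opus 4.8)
The plan is to use the representation-theoretic characterisation of Gelfand pairs established earlier in the excerpt, namely the proposition that $(G,K)$ is a Gelfand pair if and only if the $G$-module $\mathbb{C}[G/K]$ decomposes into irreducible $G$-modules with each appearing at most once. So I would set $G' = G\times G^{opp}$ and $K' = \diag(G)$, and compute the decomposition of $\mathbb{C}[G'/K']$ into irreducible $G'$-modules, then check that every multiplicity is $0$ or $1$.

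First I would identify $\mathbb{C}[G'/K']$ as an induced module: since $V = \mathbf{1}$ is the trivial $K'$-module, Proposition on $\Ind_K^G V \cong \mathbb{C}[G/K]$ gives $\mathbb{C}[G'/K'] \cong \Ind_{K'}^{G'} \mathbf{1}$. Next I would use Frobenius reciprocity (Theorem \ref{frob}) together with the description of the irreducible $G'$-modules as $X_i \otimes \overline{X_j}$ for $X_i, X_j \in \hat{G}$, which was just recalled in the excerpt. Concretely, for each pair $(i,j)$ the multiplicity of $X_i\otimes\overline{X_j}$ in $\Ind_{K'}^{G'}\mathbf{1}$ equals $\langle \mathbf{1}, \Res_{K'}^{G'}(X_i\otimes\overline{X_j})\rangle_{K'}$. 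Since $K' = \diag(G)\cong G$ via $(x,x^{-1})\mapsto x$, and the character of $X_i\otimes\overline{X_j}$ at $(x,x^{-1})$ is $\mathcal{X}_i(x)\overline{\mathcal{X}_j(x^{-1})} = \mathcal{X}_i(x)\mathcal{X}_j(x)$ — wait, more carefully $\overline{X_j}$ as a $G^{opp}$-module has character $g\mapsto\overline{\mathcal{X}_j}$ evaluated with the opposite-group action, which on the diagonal element $(x,x^{-1})$ yields $\overline{\mathcal{X}_j(x)}$ — this inner product becomes $\frac{1}{|G|}\sum_{x\in G}\mathcal{X}_i(x)\overline{\mathcal{X}_j(x)} = \langle \mathcal{X}_i,\mathcal{X}_j\rangle_G = \delta_{ij}$ by orthonormality of irreducible characters.

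Therefore $\mathbb{C}[G'/K'] \cong \bigoplus_{i} X_i\otimes\overline{X_i}$, each irreducible $G'$-module occurring with multiplicity at most one, and by the cited proposition the pair $(G\times G^{opp},\diag(G))$ is a Gelfand pair. I expect the main obstacle to be purely bookkeeping: getting the $G^{opp}$-action conventions right so that the character of $X_i\otimes\overline{X_j}$ restricted to $\diag(G)$ comes out to $\mathcal{X}_i(x)\overline{\mathcal{X}_j(x)}$ rather than, say, $\mathcal{X}_i(x)\mathcal{X}_j(x)$ or $\mathcal{X}_i(x)\overline{\mathcal{X}_j(x^{-1})}$ — the sign/inverse/conjugate conventions for the opposite group and for $\overline{X_j}$ need to be tracked consistently with the definitions fixed earlier in the section. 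Once that is pinned down, the orthonormality relations for irreducible characters of $G$ finish the argument immediately, and as a bonus this computation also sets up the subsequent identification of the zonal spherical functions of this pair with the normalised irreducible characters of $G$.
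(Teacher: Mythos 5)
Your proposal is correct and follows essentially the same route as the paper: reduce to showing $\Ind_{\diag(G)}^{G\times G^{opp}}\mathbf{1}\cong\mathbb{C}[G'/K']$ is multiplicity-free, apply Frobenius reciprocity to each irreducible $X_i\otimes\overline{X_j}$, and conclude by character orthogonality that each multiplicity is a Kronecker delta. The convention issue you flag (whether the delta pairs $X_j$ with $X_i$ or with its dual) is also present, implicitly, in the paper's own computation, and it is harmless since only multiplicity at most one is needed.
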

\begin{proof}
The irreducible $G\times G^{opp}$-modules have the form $X_i\otimes \overline{X_j}$ where $1\leq i,j \leq |\mathcal{C}(G)|.$ We have just to show that $X_i\otimes \overline{X_j}$ appears at most once in the decomposition of the $G\times G^{opp}$-module $\Ind_{\diag(G)}^{G\times G^{opp}}1.$ The number of appearances of the irreducible $G\times G^{opp}$-module $X_i\otimes \overline{X_j}$ in the decomposition of the $G\times G^{opp}$-module $\Ind_{\diag(G)}^{G\times G^{opp}}1$ is equal to :
$$<\Ind_{\diag(G)}^{G\times G^{opp}}1, \mathcal{X}_i\otimes \mathcal{\overline{X}}_j>,$$
where $\mathcal{X}_i\otimes \mathcal{\overline{X}}_j(x,y)=\mathcal{X}_i(x)\mathcal{\overline{X}}_j(y)$ for any $x,y\in G.$
By Frobenius reciprocity theorem (see Theorem \ref{frob}), we have : 
\begin{eqnarray*}
<\Ind_{\diag(G)}^{G\times G^{opp}}1, \mathcal{X}_i\otimes \mathcal{\overline{X}}_j>&=&<1, \Res_{\diag(G)}^{G\times G^{opp}} \mathcal{X}_i\otimes \mathcal{\overline{X}}_j>\\
&=&\frac{1}{|\diag(G)|}\sum_{(g,g^{-1})\in \diag(G)}\mathcal{X}_i\otimes \mathcal{\overline{X}}_j(g,g^{-1})\,\,\,\,\,\,\text{(by definition of $<,>$)}\\
&=&\frac{1}{|G|}\sum_{g\in G}\mathcal{X}_i(g)\mathcal{\overline{X}}_j(g^{-1})\\
&=&<\mathcal{X}_i,\mathcal{\overline{X}}_j>\\
&=&\delta_{\mathcal{X}_i,\mathcal{\overline{X}}_j}.
\end{eqnarray*}
Therefore, we have :
$$\Ind_{\diag(G)}^{G\times G^{opp}}1=\sum_{i=1}^{|\mathcal{C}(G)|}X_i\otimes X_i.$$
\end{proof}
We deduce from this result the following corollary.
\begin{cor}\label{cor:nbclass=nbdoublclass} Let $G$ be a finite group. Then, we have :
$$|\mathcal{C}(G)|=|\diag(G)\setminus G\times G^{opp}/\diag(G)|.$$
\end{cor}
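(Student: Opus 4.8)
The plan is to chain together the facts already assembled in this section. By the proposition just proved, $\Ind_{\diag(G)}^{G\times G^{opp}}1=\bigoplus_{i=1}^{|\mathcal{C}(G)|}X_i\otimes X_i$, and since $\Ind_{\diag(G)}^{G\times G^{opp}}1$ is identified with the $G\times G^{opp}$-module $\mathbb{C}[(G\times G^{opp})/\diag(G)]$, the latter decomposes into exactly $|\mathcal{C}(G)|$ irreducible constituents, each occurring with multiplicity one. Write this decomposition as $\mathbb{C}[(G\times G^{opp})/\diag(G)]=\bigoplus_{i=1}^{s}X_i\otimes X_i$ with $s=|\mathcal{C}(G)|$.

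Next I would invoke the general theory recalled at the end of Subsection \ref{sec:paire_des_Gelfand}: the zonal spherical functions $(\omega_i)_{1\le i\le s}$ of the Gelfand pair $(G\times G^{opp},\diag(G))$ are indexed precisely by those $s$ constituents, and they form an orthogonal basis of $C(G\times G^{opp},\diag(G))$. Hence $\dim C(G\times G^{opp},\diag(G))=s=|\mathcal{C}(G)|$. On the other hand, Proposition \ref{prop:isom} provides an algebra isomorphism $C(G\times G^{opp},\diag(G))\cong\mathbb{C}[\diag(G)\setminus G\times G^{opp}/\diag(G)]$, whose dimension is by definition the number of double-classes, that is $|\diag(G)\setminus G\times G^{opp}/\diag(G)|$. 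Comparing the two computations of this dimension yields the claim.

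Since every step is a direct quotation of a result already established, I do not expect a genuine obstacle here; the only point requiring care is to use the correct chain of identifications (number of double-classes $=$ dimension of the double-class algebra $=$ number of zonal spherical functions $=$ number of irreducible constituents of $\mathbb{C}[(G\times G^{opp})/\diag(G)]$), rather than attempting to enumerate double-classes by hand.

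If one prefers an elementary, self-contained argument, an alternative is to exhibit an explicit bijection. Writing the product in $G\times G^{opp}$ as $(a,b)(c,d)=(ac,db)$, one computes that the double-class of $(g,h)$ is $\{(xgy,\,y^{-1}hx^{-1}) : x,y\in G\}$, and a short verification shows this equals $\{(g_1,h_1) : g_1h_1 \text{ is conjugate to } gh \text{ in } G\}$. Thus $\diag(G)(g,h)\diag(G)\mapsto C_{gh}$ is a well-defined bijection from $\diag(G)\setminus G\times G^{opp}/\diag(G)$ onto $\mathcal{C}(G)$. This route avoids representation theory entirely but is slightly more computational.
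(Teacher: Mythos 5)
Your main argument is correct and is essentially the paper's own deduction: the corollary is stated right after the decomposition $\Ind_{\diag(G)}^{G\times G^{opp}}1=\bigoplus_{i}X_i\otimes X_i$, and the intended chain is exactly yours (double-classes $=$ dimension of $C(G\times G^{opp},\diag(G))$ $=$ number of zonal spherical functions $=$ number of irreducible constituents $=|\mathcal{C}(G)|$). Your elementary alternative via $(a,b)\mapsto C_{ab}$ is also sound and coincides with what the paper itself later calls ``a direct proof'' of this corollary in Section \ref{sec:FrobeniustheorembyusingtheGelfandpair}.
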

The zonal spherical functions $(\omega_i)_{1\leq i \leq |\mathcal{C}(G)|}$ of the pair $(G\times G^{opp},\diag(G))$ are defined as follows :
\begin{eqnarray}\label{fct_sph_zonal=chara}
\omega_i(x,y)&=&\frac{1}{|G|}\sum_{g\in G}\mathcal{X}_i\otimes \mathcal{X}_i((x,y)^{-1}(g,g^{-1})) \nonumber \\
&=&\frac{1}{|G|}\sum_{g\in G}\mathcal{X}_i(x^{-1}g) \mathcal{X}_i(g^{-1}y^{-1}) \nonumber \\
&=&\frac{1}{\dim X_i}\mathcal{X}_i(yx),
\end{eqnarray}
the last equality comes from the definition of the product and from Proposition \ref{caract_norma}. By using the property \eqref{prop_fon_zon} of zonal spherical function in this case, we get :
\begin{equation*}
\omega_i(x,1)\omega_i(y,1)=\frac{1}{|G|}\sum_{g\in G}\omega_i((x,1)(g,g^{-1})(y,1)),
\end{equation*}
for any $x,y\in G.$ If we develop this equality, we obtain :
\begin{equation*}
\frac{1}{\dim X_i}\mathcal{X}_i(x)\frac{1}{\dim X_i}\mathcal{X}_i(y)=\frac{1}{|G|}\sum_{g\in G}\frac{1}{\dim X_i}\mathcal{X}_i(g^{-1}xgy),
\end{equation*}
for any $x,y\in G.$ If we extend this relation by linearity to the group algebra $\mathbb{C}[G],$ and if $x$ and $y$ are two elements of $Z(\mathbb{C}[G]),$ we get :
\begin{equation*}
\frac{1}{\dim X_i}\mathcal{X}_i(x)\frac{1}{\dim X_i}\mathcal{X}_i(y)=\frac{1}{\dim X_i}\mathcal{X}_i(xy).
\end{equation*}
Thus we re-find the result of Proposition \ref{caract_norma}. This allows us to look to the zonal spherical functions as generalisations of normalised irreducible characters. In particular, the formula \eqref{prop_fon_zon} generalises the result of Proposition \ref{caract_norma}.

\section{Generalisation of some properties of irreducible characters to zonal spherical functions}\label{sec:Gen_fct_sph_zon}

In this section we give some results concerning zonal spherical functions of Gelfand pairs which generalise the properties of irreducible characters of finite groups given in Section \ref{sec:irr_cha_fr_for}. As similar to Proposition \ref{caract_norma}, we show that the zonal spherical functions of a given Gelfand pair are morphisms. Our main result is a Frobenius formula in the case of Gelfand pairs, this formula allows us to write the structure coefficients of the double-class algebra associated to a Gelfand pair in terms of zonal spherical functions. %which is similar in the case of Gelfand pair to the result of Theorem \ref{coef_fct_cara} in the case of centers of finite groups algebras.

The properties concerning zonal spherical functions of Gelfand pair in general, given in this section, appear -- according to the author knowledge -- for the first time here. Some authors have already given these properties in special cases of Gelfand pairs, the reader can see the paper \cite{jackson2012character} of Jackson and Sloss about the Gelfand pair $(\mathcal{S}_n\times \mathcal{S}_{n-1}^{opp},\diag(\mathcal{S}_{n-1}))$ and that \cite{GouldenJacksonLocallyOrientedMaps} of Goulden and Jackson about the Gelfand pair $(\mathcal{S}_{2n},\mathcal{B}_n).$ 

\begin{prop}\label{homomorphism}
The zonal spherical functions of a given Gelfand pair $(G,K)$ define morphisms between the algebra $\mathbb{C}[K\setminus G/ K]$ and $\mathbb{C}^*.$
\end{prop}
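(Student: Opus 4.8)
The plan is to translate the statement into a concrete claim about the functions $\omega_i$ on the double-class algebra and then reduce it to the already-established property \eqref{prop_fon_zon}. First I would make precise what "defines a morphism between $\mathbb{C}[K\setminus G/K]$ and $\mathbb{C}^*$" means: via the isomorphism $\psi$ of Proposition \ref{prop:isom}, an element $x=\sum_{\lambda\in\mathcal{J}}x_\lambda\,\mathbf{DC}_\lambda$ of the double-class algebra corresponds to the function $\psi_x\in C(G,K)$, and each zonal spherical function $\omega_i\in C(G,K)$ acts on $C(G,K)$ by the linear functional $f\mapsto \langle \overline{\omega_i},f\rangle$ up to normalisation; equivalently, one pairs $\omega_i$ against the group-algebra element $\mathbf{DC}_\lambda$ by summing $\omega_i$ over the double class. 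So I would define, for a fixed index $i$, the linear map $\hat\omega_i:\mathbb{C}[K\setminus G/K]\to\mathbb{C}$ by $\hat\omega_i(\mathbf{DC}_\lambda)=\omega_i(g_\lambda)|DC_\lambda|/$ (an appropriate normalising constant, e.g. $|K|$ or $\omega_i(1)$ chosen so that $\hat\omega_i(\mathbf{DC}_{1})=1$), extended linearly, and the claim is that $\hat\omega_i$ is an algebra homomorphism, i.e. $\hat\omega_i(\mathbf{DC}_\lambda\mathbf{DC}_\delta)=\hat\omega_i(\mathbf{DC}_\lambda)\hat\omega_i(\mathbf{DC}_\delta)$, together with $\hat\omega_i$ being nonzero (so the image lands in $\mathbb{C}^*=\mathbb{C}\setminus\{0\}$, or in the multiplicative monoid, matching the Proposition \ref{caract_norma} analogue).

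The key computational step is the multiplicativity. I would write $\mathbf{DC}_\lambda=\sum_{x\in DC_\lambda}x$ and $\mathbf{DC}_\delta=\sum_{y\in DC_\delta}y$, so that $\hat\omega_i(\mathbf{DC}_\lambda\mathbf{DC}_\delta)$ (after extending $\omega_i$ linearly to $\mathbb{C}[G]$) equals $\sum_{x\in DC_\lambda}\sum_{y\in DC_\delta}\omega_i(xy)$ up to the normalisation. The crucial input is \eqref{prop_fon_zon}: $\omega_i(x)\omega_i(y)=\frac{1}{|K|}\sum_{k\in K}\omega_i(xky)$. Summing this over $x\in DC_\lambda$ and $y\in DC_\delta$ gives $|DC_\lambda|\,|DC_\delta|\,\omega_i(g_\lambda)\omega_i(g_\delta)$ on the left (since $\omega_i$ is constant on double classes) and, on the right, $\frac{1}{|K|}\sum_{x\in DC_\lambda}\sum_{y\in DC_\delta}\sum_{k\in K}\omega_i(xky)$; the set of triples $(x,k,y)$ maps onto products $xky$, and since $DC_\lambda K=DC_\lambda$, $K\,DC_\delta=DC_\delta$, a counting argument shows each element $z\in DC_\lambda DC_\delta$ is hit with the multiplicity appearing in $\mathbf{DC}_\lambda\mathbf{DC}_\delta$ (times a uniform factor of $|K|$ coming from the freedom in $k$ after fixing $x,y$). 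Expanding $\mathbf{DC}_\lambda\mathbf{DC}_\delta=\sum_\rho k_{\lambda\delta}^\rho\,\mathbf{DC}_\rho$ and applying $\omega_i$ then yields $\hat\omega_i(\mathbf{DC}_\lambda)\hat\omega_i(\mathbf{DC}_\delta)=\hat\omega_i(\mathbf{DC}_\lambda\mathbf{DC}_\delta)$ once the normalising constants are matched. For non-vanishing, I would observe $\hat\omega_i(\mathbf{DC}_{1})=1\neq 0$ because $\omega_i(1)\neq 0$ (the zonal spherical functions are normalised so that $\omega_i(1_G)=1$), hence the image of the identity is $1$ and $\hat\omega_i$ is a unital algebra morphism, so it cannot send any invertible element to $0$; in particular it maps into $\mathbb{C}^*$ as claimed (and one should note, as the paper does for the center, that the generalisation parallels Proposition \ref{caract_norma}).

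The main obstacle I anticipate is the bookkeeping in the double sum over $DC_\lambda\times DC_\delta\times K$: one must verify that the natural surjection $(x,k,y)\mapsto xky$ onto $DC_\lambda DC_\delta$ has constant fibre size and that this constant is exactly $|K|$ times the structure-coefficient weighting, i.e. that $\sum_{x,k,y}\omega_i(xky)=|K|\sum_\rho k_{\lambda\delta}^\rho\sum_{z\in DC_\rho}\omega_i(z)$. This is really the statement that the map $C(G,K)\to\mathbb{C}$, $f\mapsto\sum_{z}f(z)$ weighted appropriately, intertwines the convolution product with ordinary multiplication when evaluated against $\omega_i$ — i.e., it is a restatement of the fact that $\omega_i$ is an idempotent (up to scalar) in the convolution algebra $C(G,K)$, which is standard but deserves a careful line. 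Everything else — linearity, extension of $\omega_i$ to $\mathbb{C}[G]$, constancy of $\omega_i$ on double classes — is routine. I would close by remarking that this proposition is the exact analogue of Proposition \ref{caract_norma}, and it is the foundation on which the Frobenius formula for $k_{\lambda\delta}^\rho$ in the next subsection is built.
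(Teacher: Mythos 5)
Your argument is correct and is essentially the paper's own: both extend $\omega_i$ linearly to $\mathbb{C}[G]$ and apply \eqref{prop_fon_zon} summed over the double classes, using right $K$-invariance ($\mathbf{DC}_\lambda k=\mathbf{DC}_\lambda$ for $k\in K$) to absorb the average over $k$, giving $\omega_i(\mathbf{DC}_\lambda)\omega_i(\mathbf{DC}_\delta)=\omega_i(\mathbf{DC}_\lambda\mathbf{DC}_\delta)$; your elementwise counting over triples $(x,k,y)$ is just the expanded form of this. One small caution: the only normalisation compatible with multiplicativity is the trivial one (the plain linear extension, which sends the algebra identity $\frac{1}{|K|}\mathbf{K}$ to $\omega_i(1)=1$), whereas rescaling so that $\hat\omega_i(\mathbf{DC}_{1})=1$ with $DC_{1}=K$ of size $|K|$ would spoil multiplicativity by a factor $|K|$.
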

\begin{proof}
Since they are defined on the group $G,$ the zonal spherical functions of the Gelfand pair $(G,K)$  can be linearly extended to be defined on the group algebra $\mathbb{C}[G].$ In this case, these functions preserve the property given by equation \eqref{prop_fon_zon}. If $x$ and $y$ are two elements of $\mathbb{C}[K\setminus G/ K],$ then we have :
\begin{equation*}
\frac{1}{|K|}\sum_{k\in K}\omega_i(xky)=\frac{1}{|K|}\sum_{k\in K}\omega_i(xy)=\omega_i(xy).
\end{equation*}
Thus, we obtain : $\omega_i(xy)=\omega_i(x)\omega_i(y)$ for every $x,y \in \mathbb{C}[K\setminus G/ K],$ which ends the proof.
\end{proof}

We recall the finite set $\mathcal{J}$ whose elements $\lambda$ index the double-classes of $K$ in $G.$ For every $\lambda\in \mathcal{J},$ we associate the function $f_\lambda: G\rightarrow \mathbb{C}$ defined as follows :
$$f_\lambda(g)=\left\{
\begin{array}{ll}
  1 & \qquad \mathrm{if}\quad g\in DC_\lambda, \\
  0 & \qquad \mathrm{ifnot.}\quad \\
 \end{array}
 \right.$$
It isn't difficult to verify that the family $(f_\lambda)_{\lambda\in \mathcal{J}}$ form a basis for $C(G,K),$ therefore :
\begin{equation}\label{dimC(G,K)}
\dim C(G,K) = |K\setminus G/K|.
\end{equation}
\begin{prop}\label{egalité_doubleclasse_fctzonale}
If $(G,K)$ is a Gelfand pair then the number of its zonal spherical functions is equal to the number of double-classes of $K$ in $G.$
\end{prop}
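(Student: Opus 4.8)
The plan is to prove the equality by a dimension count of the algebra $C(G,K)$, carried out in two ways.

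First I would note that, by construction, the number of zonal spherical functions of $(G,K)$ is exactly $s$, the number of (pairwise non-isomorphic) irreducible $G$-modules $X_i$ occurring in the multiplicity-free decomposition $\mathbb{C}[G/K]=\bigoplus_{i=1}^{s}X_i$; so the statement is equivalent to $s=|K\setminus G/K|$.

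Next I would invoke the properties of zonal spherical functions recalled above from \cite[page 389]{McDo}: the family $(\omega_i)_{1\le i\le s}$ is an \emph{orthogonal} basis of $C(G,K)$, hence in particular a basis, so $\dim_{\mathbb{C}}C(G,K)=s$. On the other hand, the indicator functions $(f_\lambda)_{\lambda\in\mathcal{J}}$ of the double-classes of $K$ in $G$ form a basis of $C(G,K)$ as well — this is precisely \eqref{dimC(G,K)}, which gives $\dim_{\mathbb{C}}C(G,K)=|K\setminus G/K|$. Comparing the two values yields $s=|K\setminus G/K|$, which is the assertion. (Alternatively one may pass through the isomorphism $C(G,K)\cong\mathbb{C}[K\setminus G/K]$ of Proposition \ref{prop:isom} and use the evident basis of the double-class algebra.)

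There is essentially no obstacle here once the quoted fact that the $\omega_i$ form a basis of $C(G,K)$ is granted. The one step that would require attention in a fully self-contained treatment is the \emph{spanning} of the $\omega_i$: their linear independence is immediate from pairwise orthogonality together with non-vanishing, whereas spanning rests on $C(G,K)\cong\mathbb{C}[K\setminus G/K]$ being a commutative semisimple $\mathbb{C}$-algebra, whose primitive idempotents are — up to scalar normalisation — exactly the $\omega_i$, and whose number of primitive idempotents equals its dimension. This is the Gelfand-pair analogue of the identity $|\mathcal{C}(G)|=|\hat{G}|$ (Proposition \ref{egalite_classe_caract}) invoked in the group-algebra case, and it generalises Corollary \ref{cor:nbclass=nbdoublclass} obtained for the pair $(G\times G^{opp},\diag(G))$.
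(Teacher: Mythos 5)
Your proof is correct and follows essentially the same route as the paper: both count $\dim C(G,K)$ in two ways, using the quoted fact that the zonal spherical functions form a basis of $C(G,K)$ and that the indicator functions $(f_\lambda)_{\lambda\in\mathcal{J}}$ give $\dim C(G,K)=|K\setminus G/K|$ as in \eqref{dimC(G,K)}. Your additional remark on how spanning of the $\omega_i$ would be justified in a self-contained treatment goes beyond the paper's proof but does not change the argument.
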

\begin{proof}
Using equation \eqref{dimC(G,K)}, the dimension of $C(G,K)$ is equal to the number of double-classes of $K$ in $G.$ Moreover, since $(G,K)$ is a Gelfand pair, its zonal spherical functions form a basis for $C(G,K),$ thus we get the result.
\end{proof}

%We use the set $\mathcal{J}$ to index the double-classes of $K$ in $G.$ 
By Proposition \ref{egalité_doubleclasse_fctzonale}, if $(G,K)$ is a Gelfand pair then we have $|\mathcal{J}|$ zonal spherical functions for $(G,K).$ We will use a set which we denote $\mathcal{J}^{'}$ ($|\mathcal{J}^{'}|=|\mathcal{J}|$) to index the zonal spherical functions of $(G,K).$ If $(G,K)$ is a Gelfand pair, then we can write :
$$\mathbb{C}[G/K]=\bigoplus_{\lambda\in \mathcal{J}^{'}}X^\lambda,$$
where the $X^\lambda$ are the irreducible $G$-modules. Let $\theta\in \mathcal{J}^{'},$ since zonal spherical functions of $(G,K)$ are constant on the double-classes of $K$ in $G,$ if $\lambda\in \mathcal{J},$ we denote by $\omega^{\theta}_\lambda$ the value of the zonal spherical function $\omega^{\theta}$ on any element of the double-class $DC_\lambda,$
$$\omega^{\theta}_\lambda:=\omega^{\theta}(g),$$
for an element $g\in DC_\lambda.$ Among the remarkable properties of zonal spherical functions given in \cite[chapitre VII]{McDo}, we recall the following :
\begin{enumerate}\label{1}
\item[1-] $\omega^{\theta}\omega^{\psi}=\delta_{\theta\psi}\frac{|G|}{\mathcal{X}^{\theta}(1)}\omega^{\theta}$
\item[2-]$<\omega^{\theta},\omega^{\psi}>=\delta_{\theta\psi}\frac{1}{\mathcal{X}^{\theta}(1)}$
\end{enumerate}
where $\mathcal{X}^{\theta}(1)=\dim X^\theta.$

\begin{notation}
If $(G,K)$ is a Gelfand pair, we denote by $\widehat{(G,K)}$ the set of its zonal spherical functions. 
\end{notation}

\begin{prop}\label{isom_algebredoubleclasse_fct}
Let $(G,K)$ be a Gelfand pair, then the function 
$$\begin{array}{ccccc}
\mathbb{C}[K\setminus G / K] & \to & \mathcal{F}(\widehat{(G,K)},\mathbb{C}) \\
x & \mapsto & (\omega^{\lambda}\mapsto \omega^{\lambda}(x)) \\
\end{array}$$
is an algebras isomorphism where $\mathcal{F}(\widehat{(G,K)},\mathbb{C})$ is the set of functions defined on $\widehat{(G,K)}$ with values in $\mathbb{C}.$
\end{prop}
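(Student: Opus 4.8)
The plan is to mirror the proof of Proposition~\ref{isom_centre_fct}, which established the analogous statement for $Z(\mathbb{C}[G])$ and $\mathcal{F}(\hat{G},\mathbb{C})$. First I would observe that the map is linear by construction, since each zonal spherical function $\omega^{\lambda}$ is a linear functional on $\mathbb{C}[G]$ and hence on the subalgebra $\mathbb{C}[K\setminus G/K]$; a linear map into a product of fields is determined by its component maps $x\mapsto \omega^{\lambda}(x)$. That it is a morphism of algebras is exactly the content of Proposition~\ref{homomorphism}: each $\omega^{\lambda}$ satisfies $\omega^{\lambda}(xy)=\omega^{\lambda}(x)\omega^{\lambda}(y)$ for $x,y\in \mathbb{C}[K\setminus G/K]$, and it obviously sends the identity of the double-class algebra to the constant function $1$, so the induced map into $\mathcal{F}(\widehat{(G,K)},\mathbb{C})$ is a unital algebra homomorphism.

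Next I would check that the map is bijective. Since both $\mathbb{C}[K\setminus G/K]$ and $\mathcal{F}(\widehat{(G,K)},\mathbb{C})$ are finite-dimensional, it suffices to prove either injectivity or surjectivity together with an equality of dimensions. The dimension count is immediate: by Proposition~\ref{egalité_doubleclasse_fctzonale} the number of zonal spherical functions equals the number of double-classes of $K$ in $G$, so
\begin{equation*}
\dim \mathbb{C}[K\setminus G/K] = |K\setminus G/K| = |\widehat{(G,K)}| = \dim \mathcal{F}(\widehat{(G,K)},\mathbb{C}).
\end{equation*}
For injectivity, suppose $x=\sum_{\lambda\in\mathcal{J}}x_\lambda \mathbf{DC}_\lambda$ lies in the kernel, so $\omega^{\theta}(x)=0$ for every $\theta\in\mathcal{J}^{'}$. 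Transporting $x$ to $C(G,K)$ via the isomorphism $\psi$ of Proposition~\ref{prop:isom} gives a function $\psi_x=\sum_\lambda x_\lambda \delta_{DC_\lambda}$ that is orthogonal, with respect to the scalar product on $C(G,K)$, to every $\omega^{\theta}$; since the zonal spherical functions form a basis of $C(G,K)$ (as recalled after Equation~\eqref{prop_fon_zon}), this forces $\psi_x=0$, hence $x=0$.

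The main obstacle — really the only point requiring care — is making precise the relationship between the evaluation $\omega^{\theta}(x)$ for $x\in\mathbb{C}[K\setminus G/K]$ and the Hermitian pairing $\langle \psi_x,\omega^{\theta}\rangle$ used to invoke the basis property. Concretely, one writes $\omega^{\theta}(x)=\sum_\lambda x_\lambda \omega^{\theta}_\lambda |DC_\lambda|$ after extending $\omega^{\theta}$ linearly to $\mathbb{C}[G]$, and compares this with the expansion of $\psi_x$ in the basis $(\omega^{\theta})_{\theta\in\mathcal{J}^{'}}$, using the orthogonality relation $\langle \omega^{\theta},\omega^{\psi}\rangle = \delta_{\theta\psi}/\mathcal{X}^{\theta}(1)$ recorded above. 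Once one checks that $\omega^{\theta}(x)=0$ for all $\theta$ is equivalent to all coefficients of $\psi_x$ in the zonal basis vanishing, injectivity follows and, combined with the dimension equality, yields that the map is an isomorphism of algebras. Alternatively, one can bypass the pairing entirely: the equation $\omega^{\theta}(x)=0$ for all $\theta$ says $\psi_x$ is a linear combination of basis functions on which all coordinate functionals vanish, which is impossible unless $\psi_x=0$; I would present whichever formulation keeps the argument shortest.
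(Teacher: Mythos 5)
Your proposal is correct and follows essentially the same route as the paper, which simply cites Proposition \ref{homomorphism} for the morphism property and Proposition \ref{egalité_doubleclasse_fctzonale} for the dimension count $\dim \mathbb{C}[K\setminus G/K]=|K\setminus G/K|=|\widehat{(G,K)}|=\dim \mathcal{F}(\widehat{(G,K)},\mathbb{C})$. You go a step further by explicitly verifying injectivity via the linear independence (orthogonality) of the zonal spherical functions in $C(G,K)$, a point the paper's one-line proof leaves implicit, so your write-up is if anything more complete.
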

\begin{proof}
This proposition is a direct consequence from Propositions \ref{homomorphism} and \ref{egalité_doubleclasse_fctzonale}.
\end{proof}

Proposition \ref{isom_algebredoubleclasse_fct} is the analogous of Proposition \ref{isom_centre_fct} for zonal spherical functions of Gelfand pairs. For a given element $\theta\in \mathcal{J}^{'},$ we define the element $E_\theta$ as follows :
$$E_\theta:=\frac{\mathcal{X}^\theta(1)}{|G|}\sum_{g\in G}\overline{\omega^{\theta}(g)}g=\frac{\mathcal{X}^\theta(1)}{|G|}\sum_{\rho\in \mathcal{J}}\overline{\omega^{\theta}_\rho}\bf{DC}_\rho.$$
\begin{prop}\label{eq:dc_foncsphe}
The family $(E_\theta)_{\theta\in \mathcal{J}^{'}}$ is a basis for $\mathbb{C}[K\setminus G/K]$ formed by idempotent elements. Moreover, we have :
$${\bf{DC}_\lambda}=|DC_\lambda|\sum_{\psi\in \mathcal{J}^{'}}\omega^\psi_\lambda E_\psi.$$
\end{prop}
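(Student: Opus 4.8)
The plan is to verify the two claims separately, using the orthogonality relations for zonal spherical functions recalled just above and the isomorphism of Proposition~\ref{isom_algebredoubleclasse_fct}.

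First I would show that each $E_\theta$ is idempotent. Using the expansion $E_\theta=\frac{\mathcal{X}^\theta(1)}{|G|}\sum_{g\in G}\overline{\omega^\theta(g)}\,g$ inside $\mathbb{C}[G]$, I compute $E_\theta E_\psi$ by multiplying out: the coefficient of a group element is a convolution $\sum_{g\in G}\overline{\omega^\theta(g)}\,\overline{\omega^\psi(g^{-1}h)}$, which after re-summing is governed by the orthogonality/multiplicativity properties (1) and (2) of the $\omega^\theta$. Concretely, since each $\omega^\theta$ extends to an algebra morphism on $\mathbb{C}[K\setminus G/K]$ (Proposition~\ref{homomorphism}) and the $E_\theta$ lie in $\mathbb{C}[K\setminus G/K]$, it suffices to evaluate $\omega^\lambda(E_\theta)$ for every $\lambda\in\mathcal{J}'$ and invoke the isomorphism of Proposition~\ref{isom_algebredoubleclasse_fct}: an element $x$ of the double-class algebra is determined by the tuple $(\omega^\lambda(x))_\lambda$, and products go to products. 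So I would compute $\omega^\lambda(E_\theta)=\frac{\mathcal{X}^\theta(1)}{|G|}\sum_{\rho\in\mathcal{J}}\overline{\omega^\theta_\rho}\,\omega^\lambda(\mathbf{DC}_\rho)$; writing $\omega^\lambda(\mathbf{DC}_\rho)=|DC_\rho|\,\omega^\lambda_\rho$ and using property~(2), $\langle\omega^\theta,\omega^\lambda\rangle=\frac{1}{|G|}\sum_{\rho}|DC_\rho|\,\omega^\theta_\rho\overline{\omega^\lambda_\rho}=\delta_{\theta\lambda}\frac{1}{\mathcal{X}^\theta(1)}$, I get $\omega^\lambda(E_\theta)=\delta_{\theta\lambda}$. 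Hence under the isomorphism $E_\theta\mapsto(\lambda\mapsto\delta_{\theta\lambda})$, the $E_\theta$ map to the standard idempotent basis of $\mathcal{F}(\widehat{(G,K)},\mathbb{C})$; this simultaneously gives $E_\theta^2=E_\theta$, $E_\theta E_\psi=0$ for $\theta\neq\psi$, and that $(E_\theta)_{\theta\in\mathcal{J}'}$ is a basis (by Proposition~\ref{egalité_doubleclasse_fctzonale} the cardinalities match, and the image functions are linearly independent).

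For the final identity, I would apply the same isomorphism to both sides: it suffices to check $\omega^\lambda\!\left(\mathbf{DC}_\mu\right)=\omega^\lambda\!\left(|DC_\mu|\sum_{\psi\in\mathcal{J}'}\omega^\psi_\mu E_\psi\right)$ for every $\lambda\in\mathcal{J}'$. The right-hand side equals $|DC_\mu|\sum_\psi\omega^\psi_\mu\,\omega^\lambda(E_\psi)=|DC_\mu|\,\omega^\lambda_\mu$ by the computation $\omega^\lambda(E_\psi)=\delta_{\lambda\psi}$ above, and the left-hand side is $|DC_\mu|\,\omega^\lambda_\mu$ since $\omega^\lambda$ is constant equal to $\omega^\lambda_\mu$ on the $|DC_\mu|$ elements of $DC_\mu$. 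As both sides lie in $\mathbb{C}[K\setminus G/K]$ and agree after applying the isomorphism of Proposition~\ref{isom_algebredoubleclasse_fct}, they are equal.

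The main obstacle is bookkeeping rather than conceptual: one must be careful that the orthogonality relation~(2) is stated with the correct normalization (the inner product on $C(G,K)$ weights each double-class $DC_\rho$ by $|DC_\rho|/|G|$), and that the bar/conjugation in the definition of $E_\theta$ is placed so that $\omega^\lambda(E_\theta)$ comes out as a genuine Kronecker delta rather than its conjugate or a scalar multiple. Once the pairing $\langle\omega^\theta,\omega^\lambda\rangle=\delta_{\theta\lambda}/\mathcal{X}^\theta(1)$ is matched up correctly with the coefficient $\frac{\mathcal{X}^\theta(1)}{|G|}$ in $E_\theta$, everything collapses cleanly through Proposition~\ref{isom_algebredoubleclasse_fct}.
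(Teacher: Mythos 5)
Your proposal is correct. For the first claim it coincides with the paper's argument: both compute $\omega^\lambda(E_\theta)=\mathcal{X}^\theta(1)\langle\omega^\lambda,\omega^\theta\rangle=\delta_{\lambda\theta}$ from orthogonality and identify the image of $E_\theta$ under the map of Proposition \ref{isom_algebredoubleclasse_fct} with the indicator function of $\theta$, whence idempotence and, by the dimension count $|\mathcal{J}'|=|\mathcal{J}|$, the basis property. For the expansion ${\bf DC}_\lambda=|DC_\lambda|\sum_{\psi}\omega^\psi_\lambda E_\psi$ you genuinely diverge: the paper substitutes the definition of $E_\psi$, expands the right-hand side in the basis $({\bf DC}_\delta)_{\delta\in\mathcal{J}}$, and collapses the double sum, a step which really rests on the dual (column) orthogonality $\sum_{\psi\in\mathcal{J}'}\mathcal{X}^\psi(1)\,\omega^\psi_\lambda\overline{\omega^\psi_\delta}=\delta_{\lambda\delta}\,|G|/|DC_\lambda|$ (the justification printed in the paper invokes the row relation $\langle\omega^\psi,\omega^\psi\rangle=1/\mathcal{X}^\psi(1)$ and is stated loosely), whereas you apply the injective evaluation morphism $x\mapsto(\omega^\lambda(x))_{\lambda\in\mathcal{J}'}$ to both sides and only need the already-established $\omega^\lambda(E_\psi)=\delta_{\lambda\psi}$ together with $\omega^\lambda({\bf DC}_\mu)=|DC_\mu|\,\omega^\lambda_\mu$. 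Your route buys a cleaner argument that reuses the first computation and avoids any appeal to the second orthogonality relation; the paper's route stays inside the double-class basis and makes the coefficient collapse explicit (in effect it derives the dual orthogonality), at the cost of that extra identity. One small bookkeeping remark, which you anticipated: in your computation the conjugate sits on $\omega^\theta$, so the pairing that appears is $\langle\omega^\lambda,\omega^\theta\rangle$ rather than $\langle\omega^\theta,\omega^\lambda\rangle$; since both equal $\delta_{\theta\lambda}/\mathcal{X}^\theta(1)$, this is harmless.
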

\begin{proof}
The fact that $(E_\theta)_{\theta\in \mathcal{J}^{'}}$ is a family of idempotent elements comes from the second property above concerning zonal spherical functions. In fact, the image of $E_\theta$ by the isomorphism given in Proposition \ref{isom_algebredoubleclasse_fct} is the following function
$$\begin{array}{ccccc}
& & \widehat{(G,K)} & \to & \mathbb{C} \\
& & \omega^\phi & \mapsto & \left\{
\begin{array}{ll}
  1 & \qquad \mathrm{si}\quad \phi=\theta \\
  0 & \qquad \mathrm{sinon}\quad \\
 \end{array}
 \right. \\
\end{array}.$$
because $$\omega^\phi(E_\theta)=\mathcal{X}^\theta(1)<\omega^\phi,\omega^\theta>,$$
for every $\phi\in \mathcal{J}^{'}.$ To prove that ${\bf{DC}_\lambda}=|DC_\lambda|\sum_{\psi\in \mathcal{J}^{'}}\omega^\psi_\lambda E_\psi,$ we develop the sum on the right hand :
\begin{eqnarray*}
|DC_\lambda|\sum_{\psi\in \mathcal{J}^{'}}\omega^\psi_\lambda E_\psi &=&\sum_{\psi\in \mathcal{J}^{'}}\omega^\psi_\lambda  \frac{\mathcal{X}^\psi(1)}{|G|}\sum_{\delta\in \mathcal{J}}|DC_\lambda|\overline{\omega^{\psi}_\delta}\bf{DC}_\delta\\
&=&\sum_{\psi\in \mathcal{J}^{'}}\sum_{\delta\in \mathcal{J}}\mathcal{X}^\psi(1)\frac{|DC_\lambda|}{|G|}\omega^\psi_\lambda  \overline{\omega^{\psi}_\delta}\bf{DC}_\delta\\
&=&\sum_{\psi\in \mathcal{J}^{'}}\mathcal{X}^\psi(1)\frac{|DC_\lambda|}{|G|}\omega^\psi_\lambda \overline{\omega^{\psi}_\lambda}\bf{DC}_\lambda\\
&=&{\bf{DC}_\lambda} \,\,\,\, (\text{because $\sum_{\psi\in \mathcal{J}^{'}}\frac{|DC_\lambda|}{|G|}\omega^\psi_\lambda  \overline{\omega^{\psi}_\lambda}=<\omega^\psi,\omega^\psi>=\frac{1}{\mathcal{X}^\psi(1)}$ }).
\end{eqnarray*}
It is clear that the elements $(E_\theta)_{\theta\in \mathcal{J}^{'}}$ form a basis for $\mathbb{C}[K\setminus G/K]$ since $|\mathcal{J}^{'}|$ is equal to the number of double-classes of $K$ in $G.$
\end{proof}

This proposition allows us to give a formula for the structure coefficients of the double-class algebra $\mathbb{C}[K\setminus G/K]$ of a Gelfand pair $(G,K)$ in terms of its zonal spherical functions.
\begin{theoreme}\label{Th_tt}
Let $(G,K)$ be a Gelfand pair and let $\mathcal{J}$ be the set which indexes the double-classes of $K$ in $G.$ Let $\lambda,\delta$ and $\rho$ be three elements of $\mathcal{J},$ the structure coefficient $k_{\lambda\delta}^\rho$ of the double-class algebra $\mathbb{C}[K\setminus G/K]$ can be written in terms of zonal spherical functions as follows:
$$k_{\lambda\delta}^\rho=\frac{|DC_\lambda||DC_\delta|}{|G|}\sum_{\theta\in \mathcal{J}^{'}}\mathcal{X}^\theta(1)\omega^\theta_\lambda \omega^\theta_\delta \overline{\omega^{\theta}_\rho}.$$
\end{theoreme}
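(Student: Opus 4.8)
The plan is to run the zonal-spherical-function analogue of the classical argument, using as the key input the idempotent basis $(E_\theta)_{\theta\in\mathcal{J}^{'}}$ constructed in Proposition \ref{eq:dc_foncsphe}. The one fact I need beyond what is stated there is that these idempotents are pairwise orthogonal, i.e.\ $E_\theta E_\psi=\delta_{\theta\psi}E_\theta$ for all $\theta,\psi\in\mathcal{J}^{'}$. This is immediate from Proposition \ref{isom_algebredoubleclasse_fct}: the algebra isomorphism $\mathbb{C}[K\setminus G/K]\to\mathcal{F}(\widehat{(G,K)},\mathbb{C})$ sends $E_\theta$ to the indicator function of the singleton $\{\omega^\theta\}$ — exactly the computation carried out inside the proof of Proposition \ref{eq:dc_foncsphe} — and indicator functions of distinct points multiply to zero while the indicator of a single point is idempotent; transporting this back through the isomorphism yields the claim. (Alternatively, one verifies $E_\theta E_\psi=\delta_{\theta\psi}E_\theta$ directly from property~1 of the zonal spherical functions, $\omega^{\theta}\omega^{\psi}=\delta_{\theta\psi}\frac{|G|}{\mathcal{X}^{\theta}(1)}\omega^{\theta}$, together with the defining formula for $E_\theta$.)

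Granting this, I would expand the product using the second identity of Proposition \ref{eq:dc_foncsphe}, namely $\textbf{DC}_\lambda=|DC_\lambda|\sum_{\psi\in\mathcal{J}^{'}}\omega^\psi_\lambda E_\psi$:
\begin{align*}
\textbf{DC}_\lambda\textbf{DC}_\delta
&=|DC_\lambda||DC_\delta|\sum_{\psi,\psi'\in\mathcal{J}^{'}}\omega^\psi_\lambda\,\omega^{\psi'}_\delta\,E_\psi E_{\psi'}\\
&=|DC_\lambda||DC_\delta|\sum_{\psi\in\mathcal{J}^{'}}\omega^\psi_\lambda\,\omega^{\psi}_\delta\,E_\psi,
\end{align*}
the second line by orthogonality of the idempotents. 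Now substitute the definition $E_\psi=\frac{\mathcal{X}^\psi(1)}{|G|}\sum_{\rho\in\mathcal{J}}\overline{\omega^\psi_\rho}\,\textbf{DC}_\rho$ and interchange the order of summation:
$$\textbf{DC}_\lambda\textbf{DC}_\delta=\sum_{\rho\in\mathcal{J}}\left(\frac{|DC_\lambda||DC_\delta|}{|G|}\sum_{\psi\in\mathcal{J}^{'}}\mathcal{X}^\psi(1)\,\omega^\psi_\lambda\,\omega^\psi_\delta\,\overline{\omega^\psi_\rho}\right)\textbf{DC}_\rho.$$
Since $(\textbf{DC}_\rho)_{\rho\in\mathcal{J}}$ is a basis of $\mathbb{C}[K\setminus G/K]$, comparing with the defining equation $\textbf{DC}_\lambda\textbf{DC}_\delta=\sum_{\rho\in\mathcal{J}}k_{\lambda\delta}^\rho\textbf{DC}_\rho$ identifies the bracketed coefficient with $k_{\lambda\delta}^\rho$, which is the asserted formula.

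The only step needing genuine care — the "main obstacle", such as it is — is the orthogonality relation $E_\theta E_\psi=\delta_{\theta\psi}E_\theta$; the rest is bookkeeping. I would settle it by pushing the question through the isomorphism of Proposition \ref{isom_algebredoubleclasse_fct}, which reduces it to the trivial statement about indicator functions and also makes transparent why the $E_\theta$ are the minimal idempotents of the (commutative) double-class algebra. A fully parallel alternative, mirroring the classical proof of Theorem \ref{coef_fct_cara}, is to apply each algebra morphism $\omega^\theta$ of Proposition \ref{homomorphism} to both sides of $\textbf{DC}_\lambda\textbf{DC}_\delta=\sum_\rho k_{\lambda\delta}^\rho\textbf{DC}_\rho$, using $\omega^\theta(\textbf{DC}_\mu)=|DC_\mu|\,\omega^\theta_\mu$, and then invert the resulting linear system via the orthogonality relation $\langle\omega^\theta,\omega^\psi\rangle=\delta_{\theta\psi}/\mathcal{X}^\theta(1)$; this reproduces the same formula.
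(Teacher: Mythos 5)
Your proof is correct and follows essentially the same route as the paper: expand both $\textbf{DC}_\lambda$ and $\textbf{DC}_\delta$ in the idempotent basis of Proposition \ref{eq:dc_foncsphe}, contract using $E_\theta E_\psi=\delta_{\theta\psi}E_\theta$, then rewrite $E_\theta$ in the double-class basis and read off the coefficient of $\textbf{DC}_\rho$. The only difference is that you make explicit the orthogonality of the idempotents (justified via Proposition \ref{isom_algebredoubleclasse_fct}), a step the paper uses silently when collapsing the double sum over $\mathcal{J}^{'}$ to a single sum.
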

\begin{proof}
By Proposition \ref{eq:dc_foncsphe}, we can write :
\begin{eqnarray*}
\bf{DC}_\lambda\bf{DC}_\delta &=& |DC_\lambda|\sum_{\theta\in \mathcal{J}^{'}}\omega^\theta_\lambda E_\theta |DC_\delta|\sum_{\phi\in \mathcal{J}^{'}}\omega^\phi_\delta E_\phi \\
&=&|DC_\lambda||DC_\delta|\sum_{\theta\in \mathcal{J}^{'}}\omega^\theta_\lambda \omega^\theta_\delta E_\theta \\
&=&|DC_\lambda||DC_\delta|\sum_{\theta\in \mathcal{J}^{'}}\omega^\theta_\lambda \omega^\theta_\delta \frac{\mathcal{X}^\theta(1)}{|G|}\sum_{\rho\in \mathcal{J}}\overline{\omega^{\theta}_\rho}\bf{DC}_\rho,
\end{eqnarray*}
and the result follows.
\end{proof}

Theorem \ref{Th_tt} is an analogous of (Frobenius) Theorem \ref{coef_fct_cara}. The author did not find such theorem in the literature which treats the general case of Gelfand pairs. A similar theorem was given for particular cases of Gelfand pairs, see for example the paper \cite{jackson2012character} of Jackson and Sloss about the Gelfand pair $(\mathcal{S}_n\times \mathcal{S}_{n-1}^{opp},\diag(\mathcal{S}_{n-1}))$ and that \cite{GouldenJacksonLocallyOrientedMaps} of Goulden and Jackson about the Gelfand pair $(\mathcal{S}_{2n},\mathcal{B}_n).$

Theorem \ref{Th_tt} can be generalised to give an expression for the structure coefficients, of the product of more than two double-classes, in terms of zonal spherical functions in the case of Gelfand pairs. Especially, we have: 
\begin{theoreme}\label{Th_prin_gener}
Let $(G,K)$ be a Gelfand pair and let $\mathcal{J}$ be the set which indexes the double-classes of $K$ in $G.$ Let $\lambda_1,\lambda_2,\cdots,\lambda_r$ and $\rho$ be $k+1$ elements (we count with multiplicity which means that these are not necessarily distinct elements) of $\mathcal{J},$ the structure coefficient $k_{\lambda_1\lambda_2\cdots\lambda_r}^\rho$ of $\bf{DC}_\rho$ in the expansion of the product $\bf{DC}_{\lambda_1}\bf{DC}_{\lambda_2}\cdots\bf{DC}_{\lambda_r}$ in the double-class algebra $\mathbb{C}[K\setminus G/K]$ can be written in terms of zonal spherical functions as follows :
\begin{equation}\label{eq:coef_str_gener}
k_{\lambda_1\lambda_2\cdots\lambda_r}^\rho=\frac{|DC_{\lambda_1}||DC_{\lambda_2}|\cdots|DC_{\lambda_r}|}{|G|}\sum_{\theta\in \mathcal{J}^{'}}\mathcal{X}^\theta(1)\omega^\theta_{\lambda_1} \omega^\theta_{\lambda_2}\cdots \omega^\theta_{\lambda_r}\overline{\omega^{\theta}_\rho}.
\end{equation}
\end{theoreme}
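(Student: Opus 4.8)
The plan is to prove this by induction on $r$, using the basis $(E_\theta)_{\theta\in\mathcal{J}'}$ of idempotents from Proposition \ref{eq:dc_foncsphe} as the central tool. The case $r=2$ is exactly Theorem \ref{Th_tt}, so that serves as the base case (and the case $r=1$ is trivial, being just the expansion $\mathbf{DC}_{\lambda_1}=|DC_{\lambda_1}|\sum_\theta \omega^\theta_{\lambda_1}E_\theta$ read off coefficients). First I would record the key multiplicative fact about the idempotents: since the $E_\theta$ are orthogonal idempotents spanning the algebra, $E_\theta E_\phi=\delta_{\theta\phi}E_\theta$; this follows from Proposition \ref{isom_algebredoubleclasse_fct}, since under that isomorphism $E_\theta$ maps to the indicator function of $\{\omega^\theta\}$ on $\widehat{(G,K)}$, and these indicators are orthogonal idempotents in $\mathcal{F}(\widehat{(G,K)},\mathbb{C})$.

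Next I would carry out the main computation directly rather than inductively, which is cleaner. Using Proposition \ref{eq:dc_foncsphe} to expand each factor $\mathbf{DC}_{\lambda_m}=|DC_{\lambda_m}|\sum_{\theta_m\in\mathcal{J}'}\omega^{\theta_m}_{\lambda_m}E_{\theta_m}$, the product becomes
\begin{equation*}
\mathbf{DC}_{\lambda_1}\cdots\mathbf{DC}_{\lambda_r}=\Bigl(\prod_{m=1}^{r}|DC_{\lambda_m}|\Bigr)\sum_{\theta_1,\dots,\theta_r\in\mathcal{J}'}\omega^{\theta_1}_{\lambda_1}\cdots\omega^{\theta_r}_{\lambda_r}\,E_{\theta_1}\cdots E_{\theta_r}.
\end{equation*}
By the orthogonality $E_{\theta_1}\cdots E_{\theta_r}=0$ unless $\theta_1=\cdots=\theta_r=:\theta$, in which case it equals $E_\theta$, the multiple sum collapses to a single sum over $\theta\in\mathcal{J}'$, giving
\begin{equation*}
\mathbf{DC}_{\lambda_1}\cdots\mathbf{DC}_{\lambda_r}=\Bigl(\prod_{m=1}^{r}|DC_{\lambda_m}|\Bigr)\sum_{\theta\in\mathcal{J}'}\omega^{\theta}_{\lambda_1}\cdots\omega^{\theta}_{\lambda_r}\,E_\theta.
\end{equation*}
Then I substitute the expansion $E_\theta=\frac{\mathcal{X}^\theta(1)}{|G|}\sum_{\rho\in\mathcal{J}}\overline{\omega^\theta_\rho}\,\mathbf{DC}_\rho$ from the definition preceding Proposition \ref{eq:dc_foncsphe}, interchange the finite sums over $\theta$ and $\rho$, and read off the coefficient of $\mathbf{DC}_\rho$, which is precisely the right-hand side of \eqref{eq:coef_str_gener}.

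I do not expect any serious obstacle here: everything reduces to the two properties of the idempotents (they are a basis, and they are orthogonal idempotents), both of which are already established in the excerpt, together with the commutativity of $\mathbb{C}[K\setminus G/K]$ which lets us freely collect the factors and which is part of the definition of a Gelfand pair. The only point requiring a word of care is the mild notational inconsistency in the statement (the product has $r$ factors $\mathbf{DC}_{\lambda_1},\dots,\mathbf{DC}_{\lambda_r}$ but the list is described as "$k+1$ elements"); in the proof I would simply treat $r$ as the number of factors throughout. An alternative, if one prefers an explicit induction, is to multiply the $r=2$ formula of Theorem \ref{Th_tt} by one further factor $\mathbf{DC}_{\lambda_{r+1}}$ at each step and simplify using $\sum_\rho k^\rho_{\cdots}\omega^\theta_\rho\cdot(\text{stuff})$, repeatedly invoking the homomorphism property of Proposition \ref{homomorphism}; but the idempotent computation above is shorter and I would present that.
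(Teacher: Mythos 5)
Your proof is correct, but it follows a different route from the paper. The paper proves Theorem \ref{Th_prin_gener} by induction on $r$: the base case is Theorem \ref{Th_tt}, and the inductive step writes $\mathbf{DC}_{\lambda_1}\cdots\mathbf{DC}_{\lambda_r}=\sum_{\delta}k_{\lambda_1\cdots\lambda_{r-1}}^{\delta}\mathbf{DC}_{\delta}\mathbf{DC}_{\lambda_r}$, so that $k_{\lambda_1\cdots\lambda_r}^{\rho}=\sum_{\delta\in\mathcal{J}}k_{\lambda_1\cdots\lambda_{r-1}}^{\delta}k_{\delta\lambda_r}^{\rho}$, and then invokes the induction hypothesis "after simplification" --- a simplification which, if written out, needs the orthogonality relation $\sum_{\delta}|DC_\delta|\,\overline{\omega^{\theta}_{\delta}}\,\omega^{\phi}_{\delta}=\delta_{\theta\phi}\frac{|G|}{\mathcal{X}^{\theta}(1)}$ to collapse the resulting double sum over $\mathcal{J}'$. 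You instead redo the $r=2$ computation of Theorem \ref{Th_tt} directly with $r$ factors: expand each $\mathbf{DC}_{\lambda_m}$ in the idempotent basis, use $E_{\theta}E_{\phi}=\delta_{\theta\phi}E_{\theta}$ (which you correctly justify via the isomorphism of Proposition \ref{isom_algebredoubleclasse_fct}, and which the paper itself uses implicitly already in the proof of Theorem \ref{Th_tt}, where Proposition \ref{eq:dc_foncsphe} only claims idempotency), collapse the multiple sum, and substitute back the expansion of $E_{\theta}$. Your one-shot argument is shorter and makes the needed orthogonality explicit at the level of the idempotents rather than of the zonal spherical functions; the paper's induction has the advantage of reusing Theorem \ref{Th_tt} as a black box, at the cost of leaving the key simplification unstated. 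Your remark that the statement's "$k+1$ elements" should read "$r+1$" is also apt.
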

\begin{proof}
We reason by induction on $r.$ The case $r=2$ is proved in Theorem \ref{Th_tt}. Suppose that the equation \eqref{eq:coef_str_gener} is true for $r-1$ double-classes and let us prove it for $r.$ If we write:
\begin{align*}
\bf{DC}_{\lambda_1}\bf{DC}_{\lambda_2}\cdots\bf{DC}_{\lambda_r}&=\sum_{\delta\in \mathcal{J}}k_{\lambda_1\lambda_2\cdots\lambda_{r-1}}^{\delta}\bf{DC}_{\delta}\bf{DC}_{\lambda_r}&\\
&=\sum_{\delta\in \mathcal{J}}\sum_{\rho\in \mathcal{J}}k_{\lambda_1\lambda_2\cdots\lambda_{r-1}}^{\delta}k_{\delta\lambda_r}^\rho\bf{DC}_{\rho},
\end{align*}
then one can see that :
\begin{equation*}
k_{\lambda_1\lambda_2\cdots\lambda_r}^\rho=\sum_{\delta\in \mathcal{J}}k_{\lambda_1\lambda_2\cdots\lambda_{r-1}}^{\delta}k_{\delta\lambda_r}^\rho.
\end{equation*}
By using the induction hypothesis and after simplification we get our result.
%\begin{align*}
%k_{\lambda_1\lambda_2\cdots\lambda_r}^\rho &=\sum_{\delta\in \mathcal{J}}\frac{|DC_{\lambda_1}||DC_{\lambda_2}|\cdots|DC_{\lambda_r|}DC_{\delta}|}{|G|}\sum_{\theta\in \mathcal{J}^{'}}\mathcal{X}^\theta(1)\omega^\theta_{\lambda_1} \omega^\theta_{\lambda_2}\cdots \omega^\theta_{\lambda_{r-1}}\overline{\omega^{\theta}_\delta}\sum_{\psi\in \mathcal{J}^{'}}\mathcal{X}^\psi(1)\omega^\psi_{\delta} \omega^\psi_{\lambda_r}\overline{\omega^{\psi}_\rho}.
%\end{align*}
\end{proof}

\section{Applications and already known results.}\label{sec:app}

In this section we apply our main theorem, Theorem \ref{Th_tt}, to special cases of Gelfand pairs. Using the Gelfand pair $(G\times G^{opp},\diag(G)),$ we show that the Frobenius theorem is a special case of our main theorem.

\subsection{How to get the Frobenius theorem by using the Gelfand pair $(G\times G^{opp},\diag(G))$}\label{sec:FrobeniustheorembyusingtheGelfandpair}

We showed in Section \ref{sec:GelfandpairGxGopp} that the zonal spherical functions of the Gelfand pair $(G\times G^{opp},\diag(G))$ are not but the normalised irreducible characters of $G.$ Let us now go further in investigating this pair. For an element $(a,b)\in G\times G^{opp}$ and two elements $(x,x^{-1})$ and $(y,y^{-1})$ of $\diag(G)$, we have :
\begin{equation}\label{pr_double_classe}
(x,x^{-1})(a,b)(y,y^{-1})=(xay,y^{-1}bx^{-1}).
\end{equation}

This equality allows us to link, by a direct way, the double-classes of $\diag(G)$ in $G\times G^{opp}$ to the conjugacy classes of $G.$ In fact, the equality \eqref{pr_double_classe} implies that two elements $(a,b)$ and $(c,d)$ are in the same double-class of $\diag(G)$ in $G\times G^{opp}$ if and only if $ab$ and $cd$ are conjugated in $G.$

Recall the set $\mathcal{I}$ used to index the conjugacy classes of $G.$ The set $\mathcal{I}$ indexes also the double-classes of $\diag(G)$ in $G\times G^{opp}.$ Let $\lambda\in \mathcal{I},$ the double-class of $\diag(G)$ in $G\times G^{opp}$ associated to $\lambda$ is denoted $C'_\lambda.$ Explicitly, we have : 
$$C'_\lambda=\lbrace (a,b)\in G\times G^{opp}~|~ ab\in C_\lambda\rbrace.$$
What we have just written is a direct proof to Corollary \ref{cor:nbclass=nbdoublclass}. 
\begin{prop}\label{prop:relation_taille_dc_class}
If $\lambda$ is an element of $\mathcal{I}$ then we have :
$$|C'_\lambda|=|G||C_\lambda|.$$
\end{prop}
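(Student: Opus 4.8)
The plan is to compute the size of the double-class $C'_\lambda$ directly by slicing it into disjoint fibers. First I would recall the explicit description $C'_\lambda=\lbrace(a,b)\in G\times G^{opp}\mid ab\in C_\lambda\rbrace$ established just above, which says precisely that $C'_\lambda$ is the preimage of the conjugacy class $C_\lambda$ under the map $\mu:G\times G^{opp}\to G$ given by $\mu(a,b)=ab$. So the key step is to show that $\mu$ restricted to $C'_\lambda$ is a $|G|$-to-one surjection onto $C_\lambda$: for each fixed $c\in C_\lambda$, the fiber $\mu^{-1}(c)=\lbrace(a,b)\mid ab=c\rbrace$ has exactly $|G|$ elements, since $a$ can be chosen freely in $G$ and then $b=a^{-1}c$ is forced. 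This gives $|C'_\lambda|=\sum_{c\in C_\lambda}|G|=|G||C_\lambda|$.

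Alternatively, and perhaps more in the spirit of the surrounding text, one could invoke the formula for the size of a double-class proved earlier: $|HgK|=\dfrac{|H||K|}{|H\cap gKg^{-1}|}$ with $H=K=\diag(G)$. Picking a representative $(a,b)$ of $C'_\lambda$ and using the product rule \eqref{pr_double_classe}, one computes that $(x,x^{-1})\in\diag(G)$ stabilizes the left-right position of $(a,b)$ exactly when $(a,b)=(x,x^{-1})(a,b)(y,y^{-1})$ for some $(y,y^{-1})$, and then checks that $\diag(G)\cap(a,b)\diag(G)(a,b)^{-1}$ is trivial — equivalently that the only $(x,x^{-1})$ conjugating $(a,b)$ back into $\diag(G)\cdot(a,b)$ trivially is the identity — which yields $|C'_\lambda|=\dfrac{|\diag(G)|^2}{1}=|G|^2$. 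But this over-counts relative to $|G||C_\lambda|$ unless one is careful, so I would prefer the first (fiber-counting) approach, which is cleaner and avoids any subtlety.

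I expect no real obstacle here; the only point requiring a moment's care is making sure the fiber count is literally $|G|$ and not something depending on $c$, which is immediate from the free choice of the first coordinate. I would write this as two or three lines: identify $C'_\lambda$ as the preimage of $C_\lambda$ under $(a,b)\mapsto ab$, note each fiber over a point of $C_\lambda$ has size $|G|$, and conclude $|C'_\lambda|=|G||C_\lambda|$.
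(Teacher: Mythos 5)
Your fiber-counting argument is correct and is essentially the paper's own proof: the paper exhibits the explicit bijection $G\times C_\lambda \to C'_\lambda$, $(x,y)\mapsto (x,x^{-1}y)$ (with inverse $(a,b)\mapsto (a,ab)$), which is exactly your parametrization of the fibers of $(a,b)\mapsto ab$ over $C_\lambda$. (Your discarded alternative via $|HgK|=|H||K|/|H\cap gKg^{-1}|$ would also work, but only after noting that $\diag(G)\cap(a,b)\diag(G)(a,b)^{-1}$ is not trivial: it corresponds to the centralizer of $ba$, of size $|G|/|C_\lambda|$, so you were right to set that route aside as written.)
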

\begin{proof}
The function which sends an element $(x,y)$ of $G\times C_\lambda$ to $(x,x^{-1}y)\in G\times G^{opp}$ is a bijection between $G\times C_\lambda$ and $C'_\lambda.$ The function which sends an element $(a,b)$ of $C'_\lambda$ to $(a,ab)$ is its inverse function.
\end{proof}

Let $\lambda$ and $\delta$ be two elements of $\mathcal{I},$ the structure coefficients $c'^\rho_{\lambda\delta}$ of the double-class algebra of $\diag(G)$ in $G\times G^{opp}, \mathbb{C}[\diag(G)\setminus G\times G^{opp}/\diag(G)],$ are defined by the following equation :
\begin{equation}
\mathbf{C}'_\lambda\mathbf{C}'_\delta=\sum_{\rho\in \mathcal{I}}c'^\rho_{\lambda\delta}\mathbf{C}'_\rho.
\end{equation} 

\begin{prop}\label{prop:lien_taille_classe_et_double_classe}
Let $G$ be a finite group and let $\mathcal{I}$ be the set which indexes the conjugacy classes of $G.$ If $\lambda,\delta$ and $\rho$ are three elements of $\mathcal{I}$ then the structure coefficients $c_{\lambda\delta}^{\rho}$ and $c'^\rho_{\lambda\delta}$ of $Z(\mathbb{C}[G])$ and $\mathbb{C}[\diag(G)\setminus G\times G^{opp}/\diag(G)]$ are strongly related by :
\begin{equation*}
c'^\rho_{\lambda\delta}=|G|c^\rho_{\lambda\delta}.
\end{equation*}
\end{prop}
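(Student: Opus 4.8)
The plan is to directly compare the two products $\mathbf{C}'_\lambda\mathbf{C}'_\delta$ and $\mathbf{C}_\lambda\mathbf{C}_\delta$ using the combinatorial description of structure coefficients furnished by Proposition \ref{desc_coef}. First I would fix a basis element $\mathbf{C}'_\rho$ of the double-class algebra and recall that, by Proposition \ref{desc_coef}, the coefficient $c'^\rho_{\lambda\delta}$ counts the pairs $((a,b),(c,d))\in C'_\lambda\times C'_\delta$ whose product $(ac,db)$ equals a fixed element $(u,v)\in C'_\rho$. By the description of the double-classes $C'_\lambda$ given just before the statement, $(a,b)\in C'_\lambda$ means $ab\in C_\lambda$, and similarly for the others; moreover $(ac,db)\in C'_\rho$ means $acdb\in C_\rho$. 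So I would reparametrise: writing $g=ab$, $g'=cd$, and noting $acdb = a g' b = a g' a^{-1}\cdot ab = (ag'a^{-1})g$, I want to count the data $(a,b,c,d)$ with $ab\in C_\lambda$, $cd\in C_\delta$ and $(ag'a^{-1})g \in C_\rho$ where $g'=cd$.

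The key step is a change of variables that decouples the ``direction'' of the double-class (an arbitrary element of $G$, which contributes the factor $|G|$ from Proposition \ref{prop:relation_taille_dc_class}) from the underlying conjugacy-class data. Concretely, I would send $(a,b)\mapsto (a, g)$ with $g=ab$ (a bijection $C'_\lambda \to G\times C_\lambda$, exactly as in the proof of Proposition \ref{prop:relation_taille_dc_class}), and likewise $(c,d)\mapsto(c,g')$ with $g'=cd$. Then the condition $(ac,db)\in C'_\rho$ becomes $(a g' a^{-1}) g \in C_\rho$, which depends on $(a, g, g')$ but not on $c$ — so $c$ ranges freely over $G$, contributing a factor $|G|$. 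For each fixed $a\in G$, the map $g'\mapsto ag'a^{-1}$ is a bijection of $G$ preserving $C_\delta$, so the number of pairs $(g,g')\in C_\lambda\times C_\delta$ with $(ag'a^{-1})g\in C_\rho$ equals the number of pairs $(g,h)\in C_\lambda\times C_\delta$ with $hg\in C_\rho$, independently of $a$; summing over $a\in G$ gives another factor $|G|$. Being slightly more careful: I should keep exactly one ``$|G|$'' free and match the rest to $c^\rho_{\lambda\delta}$, since $c^\rho_{\lambda\delta}$ already counts pairs $(g,h)\in C_\lambda\times C_\delta$ with $gh$ (equivalently $hg$, after renaming) equal to a fixed element of $C_\rho$.

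A cleaner alternative, which I would actually prefer to write up, is to invoke the algebra isomorphism of Proposition \ref{prop:isom} (or to observe directly) that the map $\mathbf{C}_\lambda \mapsto \frac{1}{|G|}\mathbf{C}'_\lambda$ is an algebra isomorphism from $Z(\mathbb{C}[G])$ onto $\mathbb{C}[\diag(G)\setminus G\times G^{opp}/\diag(G)]$. Indeed one checks on the group algebra level that $\mathbb{C}[G\times G^{opp}]\cong \mathbb{C}[G]\otimes\mathbb{C}[G^{opp}]$, that the map $x\mapsto \sum_{a\in G}(a, a^{-1})\cdot$ (interpreted suitably) intertwines the conjugation action with left/right translation, and that under the identification $C'_\lambda \leftrightarrow G\times C_\lambda$ from Proposition \ref{prop:relation_taille_dc_class} the product $\mathbf{C}'_\lambda\mathbf{C}'_\delta$ acquires exactly one extra factor of $|G|$ relative to $\mathbf{C}_\lambda\mathbf{C}_\delta$. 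Comparing the coefficient of $\mathbf{C}'_\rho$ on both sides of $\mathbf{C}'_\lambda\mathbf{C}'_\delta = |G|\sum_{\rho} c^\rho_{\lambda\delta}\mathbf{C}'_\rho$ (scaling the identity $\mathbf{C}_\lambda\mathbf{C}_\delta=\sum_\rho c^\rho_{\lambda\delta}\mathbf{C}_\rho$ through the isomorphism) yields $c'^\rho_{\lambda\delta} = |G|\,c^\rho_{\lambda\delta}$.

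The main obstacle is purely bookkeeping: making sure the powers of $|G|$ are tracked correctly through the bijection $C'_\lambda \leftrightarrow G\times C_\lambda$ and through the convolution/multiplication, and confirming that the ``free'' variable (the group element $a$, or equivalently the choice of representative along the diagonal) genuinely contributes exactly one factor $|G|$ and not zero or two. There is no representation-theoretic difficulty here — one could also derive the identity a posteriori by comparing Theorem \ref{coef_fct_cara} with Theorem \ref{Th_tt} applied to $(G\times G^{opp},\diag(G))$, using $\omega^\theta = \mathcal{X}^\theta/\mathcal{X}^\theta(1)$ and $|C'_\lambda|=|G||C_\lambda|$ — but I would present the elementary combinatorial argument above as the main proof and perhaps remark on the representation-theoretic cross-check.
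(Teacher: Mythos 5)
Your main argument follows the same route as the paper: describe both coefficients combinatorially via Proposition \ref{desc_coef} and exhibit the extra factor $|G|$ by a change of variables. However, as written there is a concrete bookkeeping gap that you flag but never close. You correctly begin with the count defining $c'^\rho_{\lambda\delta}$, namely pairs whose product \emph{equals a fixed element} $(u,v)\in C'_\rho$, but in the reparametrisation you replace this by the weaker condition that the product \emph{lies in} $C'_\rho$. The set you then enumerate has cardinality $c'^\rho_{\lambda\delta}\,|C'_\rho|$, not $c'^\rho_{\lambda\delta}$, and your count of it, $|G|\cdot|G|\cdot c^\rho_{\lambda\delta}|C_\rho|$ (one $|G|$ from the free variable $c$, one from $a$, and $|\{(g,h)\in C_\lambda\times C_\delta : hg\in C_\rho\}|=c^\rho_{\lambda\delta}|C_\rho|$), yields the claim only after you also invoke $|C'_\rho|=|G||C_\rho|$ from Proposition \ref{prop:relation_taille_dc_class} applied to $\rho$ --- a step absent from your write-up; the remark ``keep exactly one $|G|$ free'' does not substitute for it. The cleanest repair is exactly what the paper does: fix the target to be $(z,1)$ with $z\in C_\rho$. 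Then $c=a^{-1}z$ and $d=b^{-1}$ are forced, the only free parameter is $a\in G$, and writing $g=ab\in C_\lambda$ the remaining condition $cd\in C_\delta$ becomes $a^{-1}(zg^{-1})a\in C_\delta$, i.e.\ $zg^{-1}\in C_\delta$, independent of $a$; hence the count is $|G|$ times the number of $(g,h)\in C_\lambda\times C_\delta$ with $hg=z$, which is $|G|\,c^\rho_{\lambda\delta}$ (using commutativity of $Z(\mathbb{C}[G])$ to swap the order, or arranging the factors as in the paper's explicit bijection $A'^\rho_{\lambda\delta}\to A^\rho_{\lambda\delta}\times G$). With either fix your argument coincides with the paper's proof.

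Your ``cleaner alternative'' is not a proof as it stands: asserting that $\mathbf{C}_\lambda\mapsto\frac{1}{|G|}\mathbf{C}'_\lambda$ is an algebra homomorphism is precisely equivalent to the identity $c'^\rho_{\lambda\delta}=|G|c^\rho_{\lambda\delta}$, and the phrase that the product ``acquires exactly one extra factor of $|G|$'' is the statement to be proved, so the reasoning is circular unless backed by the same computation as above; note also that Proposition \ref{prop:isom} concerns the isomorphism $\mathbb{C}[K\setminus G/K]\cong C(G,K)$ and says nothing about a map out of $Z(\mathbb{C}[G])$. The representation-theoretic cross-check (comparing Theorem \ref{coef_fct_cara} with Theorem \ref{Th_tt} for this pair) is fine as a consistency check, but it runs against the purpose of this section, which is to rederive the Frobenius formula from Theorem \ref{Th_tt} via this proposition.
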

\begin{proof}
By direct computing, using Proposition \ref{desc_coef},the structure coefficient $c_{\lambda\delta}^\rho$ in $Z(\mathbb{C}[G])$ given by equation \eqref{coef_centre} is not but the cardinal of the set $A_{\lambda\delta}^{\rho}$ defined by :
$$A_{\lambda\delta}^{\rho}=\lbrace (x,y)\in C_\lambda\times C_{\delta}\text{ such that } xy=z \rbrace,$$
where $z$ is a fixed element of $C_\rho.$
By the same way, we have $c'^\rho_{\lambda\delta}=|A'^{\rho}_{\lambda\delta}|$ where
$$A'^{\rho}_{\lambda\delta}=\lbrace \big((x_1,x_2),(y_1,y_2)\big)\in C'_\lambda\times C'_\delta \text{ such that } (x_1y_1,y_2x_2)=(z,1)\rbrace.$$

The function, 
$$\begin{array}{ccccc}
&  & A'^{\rho}_{\lambda\delta} & \to & A^{\rho}_{\lambda\delta}\times G \\
& & \big((x_1,x_2),(y_1,x_2^{-1})\big) & \mapsto & \big((x_1x_2,x_2^{-1}y_1),x_2\big) \\
\end{array}$$
is well defined -- the image of an element of $A'^{\rho}_{\lambda\delta}$ is in $A^{\rho}_{\lambda\delta}\times G$ -- and it is a bijection with inverse defined by the function
\begin{equation*}
\big((x,y),t\big)\mapsto \big((xt^{-1},t),(ty,t^{-1})\big). \qedhere
\end{equation*}
\end{proof}

We have now all necessary informations to re-obtain the Frobenius theorem as an application of our Theorem \ref{Th_tt}. The reader should remark, using the results of this section and Section \ref{sec:GelfandpairGxGopp}, that the study of the center of a finite group $G$ algebra is equivalent to the study of the double-class algebra of the pair $(G\times G^{opp},\diag(G)).$

We showed in Section \ref{sec:GelfandpairGxGopp} that the number of zonal spherical functions of the pair $(G\times G^{opp},\diag(G))$ is equal to the number of irreducible characters of $G$ and that the zonal spherical functions of $(G\times G^{opp},\diag(G))$ are not but the normalised irreducible characters of $G.$ By using Proposition \ref{prop:relation_taille_dc_class}, Theorem \ref{Th_tt} when applied to the Gelfand pair $(G\times G^{opp},\diag(G))$ gives us the following :
\begin{equation*}
c'^\rho_{\lambda\delta}=\frac{|C_\lambda||G||C_\delta||G|}{|G|}\sum_{X\in \hat{G}}\mathcal{X}(1)\frac{\mathcal{X}_\lambda\mathcal{X}_\delta\overline{\mathcal{X}_\rho}}{\dim X\dim X\dim X}.
\end{equation*}
After simplification and by using Proposition \ref{prop:lien_taille_classe_et_double_classe}, we get :
\begin{equation*}
c^\rho_{\lambda\delta}=|C_\lambda||C_\delta|\sum_{X\in \hat{G}}\frac{\mathcal{X}_\lambda\mathcal{X}_\delta\overline{\mathcal{X}_\rho}}{\dim^2 X}.
\end{equation*}
This equation is another form of Theorem \ref{coef_fct_cara} of Frobenius which is often used when taking the graphs interpretation of the structure coefficients of the centers of group algebras rather than the algebraic one (which is used in this paper). The reader can see the appendix of Zagier in \cite{lando2004graphs} for more informations.

Here as well as in Theorem \ref{coef_fct_cara}, we study the structure coefficients that appear in the product of two basis elements. In the appendix \cite{lando2004graphs} of Zagier, the author was interested in the structure coefficients that appear in the product of $k$-basis elements, where $k$ is a finite integer. He gave thus the general form of the Frobenius formula. I would like to mention that using the same idea presented in this section, we can re-obtain his general form of the Frobenius formula as an application of our general theorem, Theorem \ref{Th_prin_gener}.

\subsection{The Gelfand pair $(\mathcal{S}_{2n},\mathcal{B}_n)$.} 

The pair $(\mathcal{S}_{2n},\mathcal{B}_n),$ where $\mathcal{S}_{2n}$ is the symmetric group of $[2n]$ and $\mathcal{B}_n$ is its hyperoctahedral sub-group, is a Gelfand pair. In \cite[Chapter VII, 2]{McDo}, the reader can find a detailed work about this pair. The double-classes of $\mathcal{B}_n$ in $\mathcal{S}_{2n}$ are indexed by partitions of $n.$ If $\lambda$ is a partition of $n,$ its associated $\mathcal{B}_n$-double-class in $\mathcal{S}_{2n},$ which we denote $K_\lambda,$ is the set of all permutations of $[2n]$ with coset-type $\lambda.$ The definition of the coset-type of a permutation of $2n$ can be found on Page 401 of \cite{McDo}. In short, if $p$ is a permutation of $2n,$ we consider all the cycles of the form
\begin{equation}\label{eq:cycleform_cycletype}
i\triangleright p(i) \longrightarrow \overline{p(i)}\triangleleft p^{-1}(\overline{p(i)})\Longrightarrow\overline{p^{-1}(\overline{p(i)})}\cdots \Longrightarrow i\end{equation}
of $p$ where $\overline{2k}$ is $2k-1$ and $\overline{2k-1}$ is $2k$ for any integer $1\leq k\leq n.$ These cycles are of even length's. The coset-type of $p$ is the partition of $n$ obtained using the halves of the lengths of these cycles. For example, take the following permutation $t$ of $12,$
$$t=\begin{pmatrix}
1&2&3&4&5&6&7&8&9&10&11&12\\
8&12&4&6&10&9&11&1&7&2&3&5
\end{pmatrix}.$$ The cycles of $t$ of form \eqref{eq:cycleform_cycletype} are:
$$1\triangleright 8 \longrightarrow 7 \triangleleft 9 \Longrightarrow 10 \triangleright 2 \longrightarrow 1 \triangleleft 8 \Longrightarrow 7 \triangleright 11 \longrightarrow 12 \triangleleft 2 \Longrightarrow 1,$$

$$3\triangleright 4 \longrightarrow 3 \triangleleft 11 \Longrightarrow 12 \triangleright 5 \longrightarrow 6 \triangleleft 4\Longrightarrow 3$$
and 
$$5\triangleright 10 \longrightarrow 9 \triangleleft 6 \Longrightarrow 5.$$
The lengths of these cycles are 6, 4 and 2 respectively, thus the coset-type of $t$ is the partition $(3,2,1)$ of $6.$

If $\lambda,$ $\delta$ and $\rho$ are three partitions of $n,$ the structure coefficient $b_{\lambda\delta}^\rho$ of the double-class algebra $\mathbb{C}[\mathcal{B}_{n}\setminus \mathcal{S}_{2n}/ \mathcal{B}_{n}]$ is the coefficient of ${\bf K}_\rho$ in the product ${\bf K}_\lambda {\bf K}_\delta.$

Since the double-classes of $\mathcal{B}_n$ in $\mathcal{S}_{2n}$ are indexed by partitions of $n,$ the zonal spherical functions of the Gelfand pair $(\mathcal{S}_{2n},\mathcal{B}_n)$ are also indexed by the set of partitions of $n$ by Proposition \ref{egalité_doubleclasse_fctzonale}. If $\rho$ is a partition of $n,$ its associated zonal spherical function $\omega^{\rho}$ is defined by : $$\omega^{\rho}(x)=\frac{1}{|\mathcal{B}_n|}\sum_{k\in \mathcal{B}_n}\chi^{2\rho}(xk),$$
for $x\in \mathcal{S}_{2n}$, where $\chi^{2\rho}$ is the character of the irreducible $\mathcal{S}_{2n}$-module corresponding to $2\rho:=(2\rho_1,2\rho_2,\cdots).$

Using Theorem \ref{Th_tt}, the coefficient $b_{\lambda\delta}^\rho$ is equal to :
\begin{equation}\label{str_coef_hecke_alg}
b_{\lambda\delta}^\rho = \frac{|K_\lambda||K_\delta|}{|\mathcal{S}_{2n}|}\sum_{\theta\in \mathcal{P}_n}\mathcal{X}^{2\theta}(1)\omega^\theta_\lambda \omega^\theta_\delta \overline{\omega^{\theta}_\rho}.
\end{equation}
If for two partitions $\theta$ and $\lambda$ of $n$ we define $\phi^\theta(\lambda)$ to be :
$$\phi^\theta(\lambda):=|K_\lambda|\omega^\theta_\lambda,$$
then from equation \eqref{str_coef_hecke_alg} one can re-obtain the following result given in \cite[Lemma 3.3]{hanlon1992some} by Hanlon, Stanley and Stembridge.
\begin{lemma} Let $\lambda,$ $\delta$ and $\rho$ be three partitions of $n$ then we have :
\begin{equation*}
b_{\lambda\delta}^\rho = \frac{1}{|K_\rho|}\sum_{\theta\in \mathcal{P}_n}\frac{1}{H_{2\theta}}\phi^\theta(\lambda)\phi^\theta(\delta)\phi^\theta(\rho),
\end{equation*}
where $H_{2\theta}=\frac{\mathcal{X}^{2\theta}(1)}{|\mathcal{S}_{2n}|}$ is the product of hook-lengths of $2\theta.$
\end{lemma}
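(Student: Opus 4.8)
The plan is to start from the Frobenius-type formula for $b_{\lambda\delta}^\rho$ already established in Equation \eqref{str_coef_hecke_alg} via Theorem \ref{Th_tt}, namely
\[
b_{\lambda\delta}^\rho = \frac{|K_\lambda||K_\delta|}{|\mathcal{S}_{2n}|}\sum_{\theta\in \mathcal{P}_n}\mathcal{X}^{2\theta}(1)\,\omega^\theta_\lambda\, \omega^\theta_\delta\, \overline{\omega^{\theta}_\rho},
\]
and simply rewrite it in terms of the renormalised quantities $\phi^\theta(\mu) = |K_\mu|\omega^\theta_\mu$. First I would observe that the zonal spherical functions of $(\mathcal{S}_{2n},\mathcal{B}_n)$ are real-valued (they are defined from real characters $\chi^{2\theta}$ of $\mathcal{S}_{2n}$ by averaging over $\mathcal{B}_n$), so $\overline{\omega^\theta_\rho} = \omega^\theta_\rho$ and the conjugation bar is cosmetic. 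This lets me replace $\omega^\theta_\lambda\omega^\theta_\delta\overline{\omega^\theta_\rho}$ by $\frac{1}{|K_\lambda||K_\delta||K_\rho|}\phi^\theta(\lambda)\phi^\theta(\delta)\phi^\theta(\rho)$.

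Substituting this into the formula, the factor $|K_\lambda||K_\delta|$ in the numerator cancels against the $|K_\lambda||K_\delta|$ appearing in the denominators of the three $\phi$'s, leaving
\[
b_{\lambda\delta}^\rho = \frac{1}{|K_\rho|}\sum_{\theta\in \mathcal{P}_n}\frac{\mathcal{X}^{2\theta}(1)}{|\mathcal{S}_{2n}|}\phi^\theta(\lambda)\phi^\theta(\delta)\phi^\theta(\rho).
\]
It then remains only to identify the scalar $\dfrac{\mathcal{X}^{2\theta}(1)}{|\mathcal{S}_{2n}|}$ with $\dfrac{1}{H_{2\theta}}$, where $H_{2\theta}$ denotes the product of hook-lengths of the partition $2\theta$ of $2n$. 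This is exactly the hook-length formula for the dimension of the irreducible $\mathcal{S}_{2n}$-module indexed by $2\theta$: one has $\mathcal{X}^{2\theta}(1) = \dfrac{(2n)!}{H_{2\theta}} = \dfrac{|\mathcal{S}_{2n}|}{H_{2\theta}}$, hence $\dfrac{\mathcal{X}^{2\theta}(1)}{|\mathcal{S}_{2n}|} = \dfrac{1}{H_{2\theta}}$. Plugging this in gives precisely the claimed identity, and since $b_{\lambda\delta}^\rho$ as defined here agrees with the structure coefficient in \cite[Lemma 3.3]{hanlon1992some}, the two formulas coincide.

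There is essentially no obstacle in this argument: it is a bookkeeping exercise combining Theorem \ref{Th_tt}, the reality of the zonal spherical functions of $(\mathcal{S}_{2n},\mathcal{B}_n)$, and the hook-length formula. The one point that deserves a sentence of care is making sure the normalisation conventions match — in particular that $\mathcal{X}^{2\theta}$ in \eqref{str_coef_hecke_alg} is the ordinary (not normalised) character of the $\mathcal{S}_{2n}$-module associated to $2\theta$, so that $\mathcal{X}^{2\theta}(1)$ really is its dimension, and that $H_{2\theta}$ in the statement is the product of hook-lengths of $2\theta$ viewed as a partition of $2n$ and not of $\theta$. Once these conventions are pinned down, the cancellation of $|K_\lambda||K_\delta|$ and the substitution $\mathcal{X}^{2\theta}(1)/|\mathcal{S}_{2n}| = 1/H_{2\theta}$ finish the proof immediately.
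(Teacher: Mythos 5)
Your argument is correct and is essentially the paper's own (implicit) derivation: the paper obtains the lemma by exactly this substitution $\phi^\theta(\mu)=|K_\mu|\omega^\theta_\mu$ into Equation \eqref{str_coef_hecke_alg}, which you carry out explicitly, using reality of the zonal spherical functions and the hook-length formula $\mathcal{X}^{2\theta}(1)=|\mathcal{S}_{2n}|/H_{2\theta}$. Your normalisation check is also well taken, since it shows the paper's parenthetical ``$H_{2\theta}=\frac{\mathcal{X}^{2\theta}(1)}{|\mathcal{S}_{2n}|}$'' has the fraction inverted relative to the intended product of hook-lengths of $2\theta$.
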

This result can be found also in the paper \cite{GouldenJacksonLocallyOrientedMaps} of Goulden and Jackson.

\subsection{The Gelfand pair $(\mathcal{S}_n\times \mathcal{S}_{n-1}^{opp},\diag(\mathcal{S}_{n-1}))$} There are many ways to represent a permutation of $n.$ Here we consider the product of disjoint cycles way. If $\omega$ is a permutation of $n,$ 
$$\omega=\mathfrak{C}_1\mathfrak{C}_2\cdots\mathfrak{C}_k,$$then we define the \textit{cycle-type} of $\omega,$ denoted by cycle-type($\omega$), to be the partition of $n$ obtained using the lenghts of the disjoint cycles $\mathfrak{C}_i$ of $\omega.$ For example, the permutation $(167)(23)(4)(58)$ of $8$ has the cycle-type $(3,2,2,1).$

The pair $(\mathcal{S}_n\times \mathcal{S}_{n-1}^{opp},\diag(\mathcal{S}_{n-1}))$ was studied by Brender in 1976. In 2007, Strahov shows in \cite[Proposition 2.2.1]{strahov2007generalized} that it is a Gelfand pair. The set of $\diag(\mathcal{S}_{n-1})$-double-classes is indexed by the pairs $(i,\lambda)$ where $i$ is an integer between $1$ and $n$ and $\lambda$ is a partition of $n-i.$ The associated $\diag(\mathcal{S}_{n-1})$-double-class of such a pair $(i,\lambda)$ is as follows :
$$DC_{(i,\lambda)}=\lbrace (a,b)\in \mathcal{S}_n\times \mathcal{S}^{opp}_{n-1} \text{ such that } ab\in C_{(i,\lambda)} \rbrace,$$
where
$$C_{(i,\lambda)}=\lbrace x\in \mathcal{S}_n \text{ such that $1$ is in a cycle $c$ with length $i$ and cycle-type}(x\setminus c)=\lambda\rbrace.$$ 
The size of $DC_{(i,\lambda)}$ is known to be :
$$|DC_{(i,\lambda)}|=|S_{n-1}||C_{(i,\lambda)}|=\frac{(n-1)!^2}{z_\lambda}.$$

Let $i$ and $j$ be two integers between $1$ and $n$ and let $\lambda$ and $\delta$ be two partitions of $n-i$ and $n-j$ respectively. The structure coefficients $c_{(i,\lambda)(j,\delta)}^{(r,\rho)}$ of the double-class algebra $\mathbb{C}[\diag(\mathcal{S}_{n-1}))\setminus \mathcal{S}_n\times \mathcal{S}^{opp}_{n-1}/ \diag(\mathcal{S}_{n-1}))]$ are defined by the following equation :

\begin{equation*}{\bf DC}_{(i,\lambda)}{\bf DC}_{(j,\delta)}=\sum_{1\leq r\leq n \atop{\rho\vdash n-r}}c^{(r,\rho)}_{(i,\lambda)(j,\delta)}{\bf DC}_{(r,\rho)}.
\end{equation*}

According to Theorem \ref{Th_tt}, the coefficient $c^{(r,\rho)}_{(i,\lambda)(j,\delta)}$ is equal to :
\begin{eqnarray}\label{str_coef_doub_class_diag(S_n)}
c^{(r,\rho)}_{(i,\lambda)(j,\delta)} &=& \frac{|S_{n-1}||C_{(i,\lambda)}||S_{n-1}||C_{(j,\delta)}|}{|\mathcal{S}_n\times \mathcal{S}^{opp}_{n-1}|}\sum_{\theta\in \mathcal{P}_{n-k}}\mathcal{X}^{(k,\theta)}(1)\omega^{(k,\theta)}_{(i,\lambda)} \omega^{(k,\theta)}_{(j,\delta)} \overline{\omega^{(k,\theta)}_{(r,\rho)}} \nonumber \\
&=&\frac{|C_{(i,\lambda)}||C_{(j,\delta)}|}{n}\sum_{\theta\in \mathcal{P}_{n-k}}\frac{\mathcal{X}^{(k,\theta)}_{(i,\lambda)}\mathcal{X}^{(k,\theta)}_{(j,\delta)}\mathcal{X}^{(k,\theta)}_{(r,\rho)}}{\dim X^{(k,\theta)}\dim X^{(k,\theta)}},
\end{eqnarray}
where the second equation is due to Equation \eqref{fct_sph_zonal=chara}. In \cite{jackson2012character}, Jackson and Sloss were interested in the coefficient of ${\bf C}_{(r,\rho)}$ in the expansion of ${\bf C}_{(i,\lambda)}{\bf C}_{(j,\delta)}$ which is after simple computing nothing but $\frac{c^{(r,\rho)}_{(i,\lambda)(j,\delta)}}{(n-1)!}.$ Using equation \eqref{str_coef_doub_class_diag(S_n)}, one can re-obtain their result in \cite[Section 2.2.2]{jackson2012character}.
%\section*{Acknowledgement}
%This work was done while I was a Phd student under the supervision of Jean-Christophe Aval and Valentin Féray. I would like to thank them a lot for their encouragements and suggestions to develop this generalisation of the Frobenius theorem.
\bibliographystyle{alpha}
% use the following instead if you encounter problems 
%\bibliographystyle{alpha}
\bibliography{biblio}
\label{sec:biblio}

\end{document}